\documentclass[a4paper,12pt]{article}
\usepackage[utf8]{inputenc}
\usepackage[T2A]{fontenc}
\usepackage[english]{babel}

\usepackage{float}%для того, чтобы орать H

% Размер страницы и текста
% margin — поля
\usepackage[a4paper,margin=1cm,bmargin=1.5cm]{geometry}
\usepackage{parskip}

% Пакеты American Mathematical Society
% дает \begin{equation}, \begin{align} и многое другое
\usepackage{amsmath}
% дает много разных математических символов
\usepackage{amssymb}
% дает \newtheorem, а также \begin{proof}
\usepackage{amsthm}

\usepackage[matrix,arrow,curve]{xy} %должны ыть коммутативные диаграммы

% дает \begin{problem}

\newtheorem{lemma}{Lemma}
\newtheorem{theorem}{Theorem}
\newtheorem*{remark}{Remark}
\newtheorem*{corollary}{Corollary}

% делает «правильные» неравенства
\let\leq\leqslant
\let\geq\geqslant

% дает \begin{answer}

% дает \begin{solution}; упрощенная версия \begin{proof} из amsthm.sty

\theoremstyle{definition}
\newtheorem{definition}{Definition}
    
\usepackage{csquotes}
%\usepackage{biblatex}
%\addbibresource{bibliography.bib}

\usepackage{mathrsfs} %\mathscr

\usepackage{indentfirst}

\begin{document}

\author{Maria Dmitrieva}
\title{Continuous Deformations of Algebras of Holomorphic Functions on Subvarieties of a Noncommutative Ball}
\date{}

\renewcommand{\abstractname}{\vspace{-\baselineskip}}

\maketitle

\begin{abstract}

We propose a general method for constructing continuous Banach bundles whose fibers are algebras of holomorphic functions on subvarieties of a closed noncommutative ball. These algebras are of the form $\mathcal{A}_d/I_x$, where $\mathcal{A}_d$ is the noncommutative disk algebra introduced by G. Popescu, and $I_x$ is a graded ideal in $\mathcal{ A}_d$, which depends continuously on the point $x$ of the topological space $X$. Similarly, we construct bundles with fibers isomorphic to the algebras $\mathcal{F}_d/I_x$ of holomorphic functions on subvarieties of an open noncommutative ball. Here $\mathcal{F}_d$ is the algebra of free holomorphic functions on the unit ball, which was also introduced by G. Popescu, and $I_x$ is a graded ideal in $\mathcal{F}_d$, which  depends continuously on the point $x$ of the topological space $X$.

%In this paper, continuous deformations of algebras of holomorphic functions on subvarieties of a closed noncommutative ball are constructed. Algebras of holomorphic functions on subvarieties of an open noncommutative ball are defined in two different ways and their continuous deformations  also are  constructed.
\end{abstract}

\section{Introduction}

Consider an open subset $U\subset \mathbb{C}^d$ and its intersection with some algebraic subvariety $\mathcal{V}\subset U$. We will denote the algebra of holomorphic functions on $\mathcal{V}$ by $\mathcal{O}(\mathcal{V})$. Define the subalgebra $\mathcal{A}(\mathcal{V})\subset \mathcal{O}(\mathcal{V})$ of holomorphic functions that can be continuously extended to the closure of $\mathcal{V}$ in $\mathbb {C}^d$. If $U=\mathcal{V}=\mathbb{B}_d$ is an open unit ball in $\mathbb{C}^d$, then the algebra $\mathcal{A}(\mathbb{B}_d)$ is refered to as the ball algebra; it is a classic and well-studied object of multivariate complex analysis (see, for example, \cite{rudin}).

In his paper \cite{Von_Neumann} G.~Popescu defined the so-called noncommutative disk algebra $\mathcal{A}_d$, a free analogue of the algebra $\mathcal{A}(\mathbb{B}_d)$. The algebra $\mathcal{A}_d$ contains the free algebra on $d$ indeterminates as a dense subalgebra. Some time later, in \cite{amazing_stuff}, G.~Popescu defined a free analogue of the algebra $\mathcal{O}(\mathbb{B}_d)$, a locally convex algebra $\mathcal{F}_d$  of free holomorphic functions on the unit ball and also gave an alternative description of the  algebra $\mathcal{A}_d$ in terms of free series.

%In his paper \cite{Von_Neumann} G. Popescu defined a free analogue of the algebra $\mathcal{O}(\mathbb{B}_d)$ --- a locally convex algebra $\mathcal{F}_d$ of free holomorphic functions on the unit ball. In the same place, he defined a free analogue of the algebra $\mathcal{A}(\mathbb{B}_d)$ --- the Banach algebra $\mathcal{A}_d$ of free holomorphic functions on the unit ball.

Several years later G.~Solomon, O.~M.~Shalit and E.~Shamovich in \cite{solomon_shalit_shamovich} defined a noncommutative analog of the algebra $\mathcal{A}(\mathcal{V})$  for an arbitrary homogeneous subvariety of the unit ball. They proved that all such algebras are of the form $\mathcal{A}_d/\overline{I}$, where $I$ is a graded ideal in the free algebra on $d$ indeterminates and $\overline{I}$ denotes its closure in $\mathcal{A}_d$. Noncommutative analogues of algebras $\mathcal{O}(\mathcal{V})$ have been  studied  a lot less and only in some particular cases, see \cite{noncommut_analog_1, noncommut_analog_2, noncommut_analog_3, noncommut_analog_4, noncommut_analog_5, noncommut_analog_6, noncommut_analog_7, noncommut_analog_8}.

%Similarly, we can consider $\mathcal{F}_d/\overline{I}$ as free analogs of $\mathcal{O}(\mathcal{V})$ algebras.

In this paper, continuing this line of research, we will consider the quotient algebras $\mathcal{F}_d/\overline{I}$ and interpret them as noncommutative analogs of the algebras $\mathcal{O}(\mathcal{V})$. We will also be interested in continuous families of algebras $\mathcal{F}_d/\overline{I}$ and $\mathcal{A}_d/\overline{I}$ in a situation where the ideal $I$ in some sense continuously depends on a parameter $x\in X$, where $X$ is a topological space. To concretize the phrase continuous family, we will use the notion of a continuous bundle of locally convex (in particular, Banach) algebras.

In \S 3 of this paper, relying on one of the results of O. M. Shalit and B. Solel \cite{subproduct}, we construct continuous bundles of Banach algebras with fibers isomorphic to $\mathcal{A}_d/\overline{I }$. In \S 4 we show that the algebra $\mathcal{F}_d$ is the inverse limit of the algebras $\mathcal{A}_d$. Using this and relying on the construction from \S 3, we construct continuous bundles of locally convex algebras, with fibers isomorphic to $\mathcal{F}_d/\overline{I}$. Thus, we generalize Corollary 8.19 from the paper \cite{pirkovskii} by A.~Yu.~Pirkovskii, which corresponds to the special case when the ideal $I$ is generated by $q$-commutators of generators of the algebra $\mathcal{F}_d$ (in this case $\mathcal{F}_d/\overline{I}$ is the algebra of holomorphic functions on the quantum ball). In \S 5 we will show that our method allows us to construct all possible bundles of some form. In \S 6 we give an alternative definition for the algebra $\mathcal{F}_d/\overline{I}$ in the spirit of the matrix  theory of free noncommutative functions. \cite{vinnikov_book, agler_book}.

\section{Preliminaries}

For any Hilbert space $H$ we denote by $\mathcal{B}(H)$ the Banach algebra of continuous operators on this space.

Take a natural number $d$. We will consider the space $\mathbb{C}^d$ as a Hilbert space with the standard inner product and the standard orthonormal basis $e_1,\ldots, e_d$, where $e_i=(0,\ldots,0,1,0,\ldots,0)$, and one is in the $i$th position. Then for $k=0,1,2,\ldots$ the space $(\mathbb{C}^{d})^{\otimes k}$ is also a Hilbert space, and the set of vectors ${\{e_{i_1}\otimes \ldots \otimes e_{i_{k}}\}_{1\leq i_1,\ldots,i_k\leq d}}$ forms its orthonormal basis.

\begin{definition}
\emph{The full Fock space} $\mathcal{F}(\mathbb{C}^d)$ is the Hilbert direct sum of the spaces $(\mathbb{C}^{d})^{\otimes k}$ (${k =0,1,2,\ldots}$).
\end{definition}

Note that the set $\{e_{i_1}\otimes \ldots \otimes e_{i_{k}}\}_{1\leq i_1,\ldots,i_k\leq d, \, k\geq 0}$ form an orthonormal basis of the space $\mathcal{F}(\mathbb{C}^d)$. \emph{The creation operators} are bounded linear operators $s_j: \mathcal{F}(\mathbb{C}^d)\to \mathcal{F}(\mathbb{C}^d)$ uniquely defined by the rule ${e_ {i_1}\otimes \ldots \otimes e_{i_{k}}\mapsto e_j\otimes e_{i_1}\otimes \ldots \otimes e_{i_{k}}}$. It is easy to see that these operators are isometric.

\begin{definition}
\emph{A noncommutative disk algebra} $\mathcal{A}_d$ is a Banach subalgebra of the algebra $\mathcal{B}(\mathcal{F}(\mathbb{C}^d))$ generated by the operators $s_1,\ldots ,s_d$.

Its dense subalgebra generated (as an algebra) by the operators $s_1,\ldots,s_d$ will be called the \emph{algebra of noncommutative polynomials} and denoted by $\mathcal{P}_d$.
\end{definition}

It is known that all the elements $s_{i_1}\ldots s_{i_k}\in \mathcal{P}_d$ are linearly independent. This implies that the algebra $\mathcal{P}_d$ is isomorphic to the free algebra on $d$ indeterminates (see \cite{Von_Neumann}). Obviously, the algebra $\mathcal{P}_d$ is graded, $\mathcal{P}_d=\bigoplus\limits_{k\geq 0} \mathcal{P}_d^k$, where $\mathcal{P}_d ^k$ is the linear span of elements of the form $s_{i_1}\ldots s_{i_k}$, and the symbol $\oplus$ in this case denotes the direct sum of vector spaces. For each nonzero $p\in \mathcal{P}_d$, we define its degree as a degree of an element of a graded algebra and denote it by $\deg p$.

% start insert

Consider the free unital semigroup $\mathbb{F}_d^+$ over $d$ generators $g_1,\ldots, g_d$. For each $\alpha=g_{i_1}\ldots g_{i_k}\in \mathbb{F}_d^+$ we denote $|\alpha|:=k$ (in particular, $|1|=0$). Consider the set of formal noncommutative variables $\{z_{1}, \ldots, z_{d}\}$ (that is, generators of a free associative algebra). For each $\alpha=g_{i_1}\ldots g_{i_k}\in \mathbb{F}_d^+$ we set $z_{\alpha}=z_{i_1}\ldots z_{i_k}$.

Suppose $F=\sum\limits_{\alpha\in \mathbb{F}_d^+}a_{\alpha}z_{\alpha}$ is a formal free series with coefficients $a_{\alpha}\in \mathbb {C}$, and $x\in \mathbb{C}$ is an arbitrary number. Then we will denote the formal free series $\sum\limits_{\alpha\in \mathbb{F}_d^+}x^{|\alpha|} a_{\alpha}z_{\alpha}$ by $F^x$ . Also, for any $k\in \mathbb{Z}_{\geq 0}$ we introduce the notation $F_k:=\sum\limits_{\alpha\in \mathbb{F}_d^+:\, |\alpha| =k}a_{\alpha}z_{\alpha}$.

Consider a Hilbert space $H$ and operators $T_1,\ldots,T_d\in \mathcal{B}(H)$. For each $\alpha=g_{i_1}\ldots g_{i_k}\in \mathbb{F}_d^+$ we set $T_{\alpha}=T_{i_1}\ldots T_{i_k}$. If the series $\sum\limits_{k=0}^{\infty}(\sum\limits_{\alpha\in \mathbb{F}_d^+:\, |\alpha|=k} a_{\alpha} T_{\alpha})$ converges in the operator norm in $\mathcal{B}(H)$, then its sum will be denoted by $F(T_1,\ldots,T_d)$. It is obvious that for any $F$ consisting of a finite number of terms and for any ${T_1,\ldots,T_d\in \mathcal{B}(H)}$ the sum $F(T_1,\ldots,T_d)$ is well-defined. It is also easy to see that $$F^x(T_1,\ldots,T_d)=F(xT_1,\ldots,xT_d)$$ and both parts are well-defined simultaneously.

Let $H$ be a Hilbert space.
We define an \emph{open operator ball} of radius $r$ in the space $\mathcal{B}(H)^d$ as follows: $$[\mathcal{B}(H)^d]_r=\{(T_1, \ldots,T_d)\in \mathcal{B}(H)^d:\, \|T_1T_1^*+\ldots+T_dT_d^*\|<r^2\}$$
Similarly, we define $\overline{[\mathcal{B}(H)^d]_r}$, \emph{the closed operator ball} of radius $r$: $$\overline{[\mathcal{B}(H )^d]_r}=\{(T_1,\ldots,T_d)\in \mathcal{B}(H)^d:\, \|T_1T_1^*+\ldots+T_dT_d^*\|\leq r^ 2\}$$

%\begin{definition}
Assume now that $H$ is infinite-dimensional and define the vector space $\mathcal{F}_d^r$ as a subspace in the space of all formal free series with coefficients in $\mathbb{C}$ 
$$\mathcal{F}_d^r=\Bigr\{F=\sum\limits_{\alpha\in \mathbb{F}_d^+}a_{\alpha}z_{\alpha}:\,\forall (T_1,\ldots,T_d)\in [\mathcal{B}(H)^d]_r \,\,\sum\limits_{k=0}^{\infty}\|F_k(T_1,\ldots,T_d)\|<\infty\Bigr\}.$$

We define multiplication on $\mathcal{F}_d^r$ as the multiplication of formal free series, and also a system of seminorms $\|\cdot \|_x$, $0< x<r$, setting $$\|F\|_x: =\sup\limits_{ (T_1,\ldots,T_d)\in [\mathcal{B}(H)^d]_x} \|\sum\limits_{k=0}^{\infty}F_k(T_1, \ldots,T_d)\|.$$

It follows from the results of \cite{amazing_stuff} that $\mathcal{F}_d^r$ is a complete metrizable locally convex algebra, and every seminorm $\|\cdot\|_x$ is actually a norm.
%\end{definition}

We use the noncommutative von Neumann inequality \cite{Von_Neumann} and see that $$\|F\|_x=\sup\limits_{y<x,
\|T_1T_1^*+\ldots+T_d T_d^*\|=y^2} \|\sum\limits_{k=0}^{\infty}F_k(T_1,\ldots,T_d)\|=\sup \limits_{y<x} \|\sum\limits_{k=0}^{\infty}F_k(ys_1,\ldots,ys_d)\|.$$

%It follows from \cite[Theorem  1.4, Corollary 3.4]{amazing_stuff} that $\mathcal{F}_d^r$ is a well-defined locally convex algebra.

The following statement is proved in \cite[Theorem1.5]{amazing_stuff}.
\begin{theorem}
A formal free series $F$ lies in $\mathcal{F}_d^r$ if and only if for any $0< x<r$ the series $\sum\limits_{k=0}^{\infty}x^k\ |F_k(s_1,\ldots,s_d)\|$ converges.
\end{theorem}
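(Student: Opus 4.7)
The plan is to reduce both implications to Popescu's noncommutative von Neumann inequality, which, as the remark just before the statement already hinted, controls the norm of any polynomial evaluation at an arbitrary row contraction in terms of its norm at the creation operators. The single estimate I will extract and use repeatedly is: for every homogeneous piece $F_k$ of degree $k$ and every $(T_1,\ldots,T_d)$ with $\|T_1T_1^* + \ldots + T_dT_d^*\| \leq y^2$,
$$\|F_k(T_1,\ldots,T_d)\| \leq y^k \, \|F_k(s_1,\ldots,s_d)\|.$$
This follows immediately by applying the inequality to the row contraction $(y^{-1}T_1,\ldots,y^{-1}T_d)$ and then using the homogeneity $F_k(y^{-1}T_1,\ldots,y^{-1}T_d) = y^{-k}F_k(T_1,\ldots,T_d)$.

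For the ``only if'' direction I will simply test the defining condition of $\mathcal{F}_d^r$ at the special tuple $(xs_1,\ldots,xs_d)$ for an arbitrary $0 < x < r$. Since $\|\sum_i (xs_i)(xs_i)^*\| = x^2\|\sum_i s_is_i^*\| \leq x^2 < r^2$, this tuple lies in $[\mathcal{B}(\mathcal{F}(\mathbb{C}^d))^d]_r$, so membership of $F$ in $\mathcal{F}_d^r$ gives $\sum_k \|F_k(xs_1,\ldots,xs_d)\| < \infty$; homogeneity rewrites this as the required convergence of $\sum_k x^k\|F_k(s_1,\ldots,s_d)\|$.

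For the converse I will take an arbitrary $(T_1,\ldots,T_d) \in [\mathcal{B}(H)^d]_r$ and put $y := \|T_1T_1^* + \ldots + T_dT_d^*\|^{1/2} < r$. The case $y = 0$ forces every $T_i = 0$, so only $F_0$ contributes and the series is trivially finite. When $0 < y < r$, the pointwise bound above gives $\|F_k(T_1,\ldots,T_d)\| \leq y^k \|F_k(s_1,\ldots,s_d)\|$, and applying the hypothesis at $x := y$ provides a summable majorant, proving $F \in \mathcal{F}_d^r$.

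The only substantive ingredient is Popescu's noncommutative von Neumann inequality itself; everything else is scaling and a trivial case split. The main point to watch is the role of the strict inequality ${<}\,r^2$ in the definition of $[\mathcal{B}(H)^d]_r$: it is exactly what guarantees $y < r$ and thus allows the hypothesis to be invoked at $x = y$. Without the strict inequality the backward direction would require an extra $\varepsilon$-room, and one would instead pick some $x$ with $y < x < r$ and majorize $\sum_k \|F_k(T)\| \leq \sum_k y^k \|F_k(s)\| \leq \sum_k x^k \|F_k(s)\| < \infty$.
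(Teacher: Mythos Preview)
The paper does not supply its own proof of this statement; it simply records it as \cite[Theorem~1.5]{amazing_stuff}. Your argument is correct and is exactly the standard derivation from Popescu's noncommutative von Neumann inequality---the same inequality the paper invokes in the displayed formula immediately preceding the theorem---so there is nothing to compare beyond noting that you have filled in what the paper leaves to the reference.
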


%\begin{theorem}
%Let $F=\sum\limits_{\alpha\in \mathbb{F}_d^+}a_{\alpha}z_{\alpha}\in \mathcal{F}_d^r$ and $(T_1,\ ldots,T_d)\in [\mathcal{B}(H)^d]_r$. Then $\sum\limits_{k=0}^{\infty}\|\sum\limits_{\alpha\in \mathbb{F}_d^+:\, |\alpha|=k} a_{\alpha} T_{\alpha}\|<\infty$ and the series $\sum\limits_{k=0}^{\infty}\sum\limits_{\alpha\in \mathbb{F}_d^+:\, |\ alpha|=k} a_{\alpha}T_{\alpha}$ converges absolutely to $F(T_1,\ldots,T_d)\in \mathcal{B}(H)$.
%\end{theorem}

Following Popescu's paper \cite{amazing_stuff}, we will interpret the algebra $\mathcal{F}_d^r$ as a free analogue of the algebra of all holomorphic functions on a $d$-dimensional open ball of radius $r$. Unfortunately, this definition is not yet  similar to the definition of the algebra $\mathcal{A}_d$. The connection between these algebras was established by G. Popescu in the same paper \cite{amazing_stuff}, see Theorem  \ref{another_definition_of_A_d} below.

%\begin{definition}
Denote by $\mathcal{A}(\mathcal{B}(\mathcal{X})_r^d)\subset \mathcal{F}_d^r$ the subset of formal free series ${F\in \mathcal{F} _d^r}$ such that the map $${x\mapsto F(xs_1,\ldots,xs_d)}$$ extends continuously from $[0,r)$ to $[0,r]$. Define a norm $\|F\|:=\|F\|_r$ on $\mathcal{A }(\mathcal{B}(\mathcal{X})_r^d)$.
%\end{definition}

It follows from the results of \cite{amazing_stuff} that $\mathcal{A}(\mathcal{B}(\mathcal{X})_r^d)$ is a Banach algebra. The following result is contained in \cite[Theorem 3.2]{amazing_stuff}.

\begin{theorem} \label{another_definition_of_A_d}
The map $$\Phi: \mathcal{A}(\mathcal{B}(\mathcal{X})_1^d) \to \mathcal{A}_d, \,\, \sum\limits_{\alpha\in \mathbb{F}_d^+} a_{\alpha}z_{\alpha}\mapsto \sum\limits_{k=0}^{\infty} \sum\limits_{\alpha\in \mathbb{F}_d ^+:\, |\alpha|=k} a_{\alpha}s_{\alpha}$$ is an isomorphism of Banach algebras.
\end{theorem}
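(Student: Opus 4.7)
The plan is to verify, in order, four properties of the explicitly defined map $\Phi$: that $\Phi(F)$ really lies in $\mathcal{A}_d$; that $\Phi$ is an algebra homomorphism; that $\Phi$ is isometric; and that $\Phi$ is surjective.

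First, I would check well-definedness. For $F\in \mathcal{A}(\mathcal{B}(\mathcal{X})_1^d)\subset \mathcal{F}_d^1$ and $y\in[0,1)$, homogeneity gives $F_k(ys_1,\ldots,ys_d)=y^k F_k(s_1,\ldots,s_d)$, and the preceding theorem guarantees $\sum_k y^k\|F_k(s_1,\ldots,s_d)\|<\infty$. Hence $F(ys_1,\ldots,ys_d)$ is the sum of a norm-convergent series of noncommutative polynomials in the $s_i$ and so lies in $\mathcal{A}_d=\overline{\mathcal{P}_d}$. By the defining condition on $\mathcal{A}(\mathcal{B}(\mathcal{X})_1^d)$, the map $y\mapsto F(ys_1,\ldots,ys_d)$ extends continuously to $y=1$, so $\Phi(F)$ is a norm limit of elements of the closed subalgebra $\mathcal{A}_d$, hence lies in $\mathcal{A}_d$.

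Next, for $F,G\in\mathcal{A}(\mathcal{B}(\mathcal{X})_1^d)$ and $y<1$, absolute convergence of the evaluated series justifies the Cauchy-product identity $(FG)(ys_1,\ldots,ys_d)=F(ys_1,\ldots,ys_d)\,G(ys_1,\ldots,ys_d)$. Letting $y\to 1^{-}$ and using joint continuity of multiplication in $\mathcal{B}(\mathcal{F}(\mathbb{C}^d))$ yields $\Phi(FG)=\Phi(F)\Phi(G)$; linearity is analogous and easier. The heart of the argument is the isometry claim. By construction $\|F\|=\|F\|_1=\sup_{y<1}\|F(ys_1,\ldots,ys_d)\|$, while $\|\Phi(F)\|=\lim_{y\to 1^{-}}\|F(ys_1,\ldots,ys_d)\|$ by continuity of the operator norm. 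These coincide once I know that $y\mapsto\|F(ys_1,\ldots,ys_d)\|$ is non-decreasing on $[0,1)$. To see the monotonicity, for $0<y_1<y_2<1$ set $\lambda=y_1/y_2<1$; then $(\lambda s_1,\ldots,\lambda s_d)$ is a row contraction, and Popescu's noncommutative von Neumann inequality applied to the polynomial $p=\sum_{k=0}^N y_2^k F_k$ gives
$$\Bigl\|\sum_{k=0}^N y_1^k F_k(s_1,\ldots,s_d)\Bigr\|=\|p(\lambda s_1,\ldots,\lambda s_d)\|\leq \|p(s_1,\ldots,s_d)\|=\Bigl\|\sum_{k=0}^N y_2^k F_k(s_1,\ldots,s_d)\Bigr\|;$$
letting $N\to\infty$, which is legitimate by the norm convergence established above, produces $\|F(y_1 s_1,\ldots,y_1 s_d)\|\leq\|F(y_2 s_1,\ldots,y_2 s_d)\|$. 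I expect the careful interchange of the von Neumann estimate with the limits in $N$ and in $y$ to be the main technical point.

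Finally, $\Phi$ is injective by being isometric, and its image contains $\mathcal{P}_d$, since every noncommutative polynomial, viewed as a finite formal free series, trivially satisfies the continuous-extension condition and is sent by $\Phi$ to the corresponding operator polynomial. Because $\mathcal{P}_d$ is dense in $\mathcal{A}_d$ and the image of an isometric linear map between Banach spaces is automatically closed, $\Phi$ must be surjective, finishing the proof.
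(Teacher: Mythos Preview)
The paper does not give its own proof of this theorem; it merely quotes it as \cite[Theorem~3.2]{amazing_stuff}. So there is nothing to compare against, and what you have written is a genuine self-contained argument rather than a reconstruction of something in the text.

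Your four steps are all sound. One point worth making explicit is that the displayed series $\sum_{k}\sum_{|\alpha|=k}a_\alpha s_\alpha$ need not converge in the operator norm (already in the commutative disk algebra a boundary-continuous holomorphic function need not have a norm-convergent Taylor series), so $\Phi(F)$ should be read as the Abel limit $\lim_{y\to 1^{-}}F(ys_1,\dots,ys_d)$; your well-definedness paragraph tacitly adopts exactly this interpretation, and once that is said the rest goes through. The monotonicity argument via Popescu's noncommutative von Neumann inequality is the standard device here, and the passage $N\to\infty$ is legitimate because both partial sums converge in norm by Theorem~1. The surjectivity step (isometric image is closed and contains the dense subalgebra $\mathcal{P}_d$) is clean.
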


Theorem \ref{another_definition_of_A_d}, in particular, implies that the elements $z_1,\ldots,z_d\in \mathcal{A}(\mathcal{B}(\mathcal{X})_1^d)$ generate a dense subalgebra $\mathcal{A}(\mathcal{B}(\mathcal{X})_1^d)$.

% end of insert
We now define function algebras on subvarieties of a noncommutative ball.
Below, for the convenience of the reader, we describe some of the constructions that are contained in a much more general form (in the context of subproduct systems) in \cite{subproduct}.

We define a map of vector spaces $Ev: \mathcal{P}_d \to \mathcal{F}(\mathbb{C}^d)$, $s_{i_1}\ldots s_{i_k}\mapsto e_{i_1}\otimes \ldots \otimes e_{i_k}$.
Take an arbitrary two-sided graded ideal $I=\bigoplus\limits_{k\geq 1} I^k\subset \mathcal{P}_d$.

\begin{definition}
We call the subspace $\mathcal{F}_d^I=Ev(I)^{\perp}\subset \mathcal{F}(\mathbb{C}^d)$ \emph{the Hardy space associated with $I$. }
\end{definition}

Denote by $i_I:\mathcal{F}_d^I \hookrightarrow \mathcal{F}(\mathbb{C}^d)$  the canonical embedding and by $p_I: \mathcal{F}(\mathbb{C}^ d) \twoheadrightarrow \mathcal{F}_d^I$ the orthogonal projection. Consider $s^I_j=p_I\circ s_j\circ i_I: \mathcal{F}_d^I\to \mathcal{F}_d^I$.

\begin{definition}
\emph{The noncommutative disk algebra associated with the ideal $I$} is the Banach subalgebra $\mathcal{A}_d^I\subset \mathcal{B}(\mathcal{F}_d^I)$ generated by the operators $s ^I_1,\ldots, s^I_d$.
\end{definition}

Since $\mathcal{A}_d^I$ is an algebra of operators on a Hilbert space, it also has an operator algebra structure in the sense of \cite{big_fat_book}.

%The following universal property of the algebra $\mathcal{A}_d^I$ is a special case of the \cite{subproduct} theorem.

O. M. Shalit and B. Solel proved the following remarkable universal property of the algebra $\mathcal{A}_d^I$ \cite[Theorem 8.4]{amazing_stuff}.

\begin{theorem} \label{general_kursovaya}
Let $H$ be a Hilbert space, $T_1,\ldots, T_d\in \mathcal{B}(H)$. Then the following two properties are equivalent.

(i) $(T_1,\ldots, T_d) \in \overline{[\mathcal{B}(H)^d]_1}$ and $p(T_1,\ldots , T_d)=0$.

(ii) There exists a unique completely contractive homomorphism $\phi: \mathcal{A}_d^I\to \mathcal{B}(H)$ such that $\phi(s_i^I)=T_i$.
\end{theorem}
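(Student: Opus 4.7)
I would prove the equivalence by handling $(ii)\Rightarrow(i)$ as a straightforward verification and $(i)\Rightarrow(ii)$ as the substantive direction.

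For $(ii)\Rightarrow(i)$ it suffices to verify the two asserted properties for the generating tuple $(s_1^I,\ldots,s_d^I)$ itself and then push them forward by $\phi$. Row-contractivity is immediate from the fact that $s_1,\ldots,s_d$ are isometries with pairwise orthogonal ranges: for $\xi_1,\ldots,\xi_d\in\mathcal{F}_d^I$,
$$\bigl\|\sum_{j=1}^d s_j^I\xi_j\bigr\|=\bigl\|p_I\sum_{j=1}^d s_j\xi_j\bigr\|\leq\bigl\|\sum_{j=1}^d s_j\xi_j\bigr\|=\bigl(\sum_{j=1}^d\|\xi_j\|^2\bigr)^{1/2}.$$
To see that $p(s^I)=0$ for $p\in I$, I would use that $Ev(I)$ is $s_j$-invariant for each $j$ (because $I$ is a left ideal in $\mathcal{P}_d$ and $s_j\cdot Ev(q)=Ev(s_jq)$). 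A standard telescoping of intermediate projections then gives $s_\alpha^I\xi=p_I s_\alpha\xi$ for every $\xi\in\mathcal{F}_d^I$ and every $\alpha\in\mathbb{F}_d^+$, whence $p(s^I)\xi=p_I(p\cdot\xi)$; but $p\cdot Ev(\mathcal{P}_d)\subseteq Ev(I)\subseteq\ker p_I$, and $Ev(\mathcal{P}_d)$ is dense in $\mathcal{F}(\mathbb{C}^d)$, so $p(s^I)=0$. Applying $\phi$ and using that it is a completely contractive homomorphism then yields both assertions of (i).

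For $(i)\Rightarrow(ii)$ the plan is to first produce a candidate homomorphism on the ambient algebra $\mathcal{A}_d$ and then factor it through $\mathcal{A}_d^I$. Popescu's noncommutative von Neumann inequality, in its universal formulation, associates to any row contraction $(T_1,\ldots,T_d)$ a unique completely contractive homomorphism $\tilde\phi:\mathcal{A}_d\to\mathcal{B}(H)$ with $\tilde\phi(s_j)=T_j$; the hypothesis $p(T)=0$ for $p\in I$, together with continuity of $\tilde\phi$, forces $\overline{I}\subseteq\ker\tilde\phi$. Applying the same universal property to $(s_1^I,\ldots,s_d^I)$ — which is a row contraction annihilating $I$ by the first part — yields a completely contractive surjection $\psi:\mathcal{A}_d\twoheadrightarrow\mathcal{A}_d^I$, $s_j\mapsto s_j^I$, whose kernel also contains $\overline{I}$.

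The main obstacle is to show $\ker\psi=\overline{I}$: once this is established, $\psi$ descends to a completely isometric isomorphism $\mathcal{A}_d/\overline{I}\cong\mathcal{A}_d^I$, through which $\tilde\phi$ factors to give the desired completely contractive $\phi:\mathcal{A}_d^I\to\mathcal{B}(H)$ with $\phi(s_j^I)=T_j$. I would establish this kernel identity by a relativized Popescu–Poisson transform attached to the subproduct system determined by $I$, in the manner of \cite{subproduct}: for any row contraction $T$ annihilating $I$ one constructs a bounded intertwiner $K:H\to\mathcal{F}_d^I\otimes\mathcal{E}$ (with $\mathcal{E}$ a defect-type auxiliary Hilbert space) realizing $T_j=K^*(s_j^I\otimes 1)K$, whence $a(T)=K^*(a(s^I)\otimes 1)K$ for polynomials $a$, an identity that extends by continuity to $\mathcal{A}_d$; this forces $a(s^I)=0\Rightarrow a(T)=0$, which is exactly $\ker\psi\subseteq\ker\tilde\phi$. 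Uniqueness of $\phi$ is then automatic, since $\{s_\alpha^I:\alpha\in\mathbb{F}_d^+\}$ spans a dense subalgebra of $\mathcal{A}_d^I$.
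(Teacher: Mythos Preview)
The paper does not supply its own proof of this theorem: it is stated as a result of Shalit and Solel and cited from the literature (the sentence immediately preceding the theorem reads ``O.~M.~Shalit and B.~Solel proved the following remarkable universal property\ldots''; the displayed citation label appears to be a typo for \cite{subproduct}). So there is nothing in the paper to compare your argument against; your sketch is essentially the Shalit--Solel argument via the Poisson transform for subproduct systems, and in outline it is the right one.

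That said, there is a genuine logical gap in your $(i)\Rightarrow(ii)$ direction. You identify the ``main obstacle'' as the kernel identity $\ker\psi=\overline{I}$ and claim that once it holds, $\psi$ descends to a \emph{completely isometric} isomorphism $\mathcal{A}_d/\overline{I}\cong\mathcal{A}_d^I$. But kernel equality only gives you a completely contractive \emph{bijection}; it says nothing about the inverse. What you actually need---and what the Poisson identity $a(T)=K^{*}(a(s^{I})\otimes 1)K$ with $\|K\|\le 1$ hands you directly---is the norm estimate $\|a(T)\|\le\|a(s^{I})\|$, and its matricial amplifications. You wrote down this identity but then only extracted the weaker implication ``$a(s^{I})=0\Rightarrow a(T)=0$'' from it. Use the full estimate instead: it shows immediately that the assignment $a(s^{I})\mapsto a(T)$ is well defined and completely contractive on the dense polynomial subalgebra, hence extends to the desired $\phi:\mathcal{A}_d^{I}\to\mathcal{B}(H)$, with no detour through $\mathcal{A}_d/\overline{I}$ required. (One also needs the usual dilation/limit argument to pass from strict row contractions, where $K$ is an isometry, to the closed ball; you should say a word about $T\mapsto rT$ with $r\uparrow 1$.)
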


\begin{corollary}
Consider a two-sided graded ideal $I=\bigoplus\limits_{k\geq 1} I^k\subset \mathcal{P}_d$. Then there exists a completely isometric isomorphism $\Phi: \mathcal{A}_d/\overline{I}\to \mathcal{A}_d^I$ such that $\Phi(s_j+\overline{I})=s_j^I $ for all $j=1,\ldots,d$.
\end{corollary}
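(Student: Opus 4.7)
The strategy is to apply Theorem \ref{general_kursovaya} twice: first with the trivial ideal to produce a map $\pi\colon \mathcal{A}_d \to \mathcal{A}_d^I$ that kills $\overline{I}$, and then with the given ideal $I$ to manufacture a converse that witnesses isometry.

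First I would verify that the compressed tuple $(s_1^I,\ldots,s_d^I)$ lies in $\overline{[\mathcal{B}(\mathcal{F}_d^I)^d]_1}$. Because $I$ is a left ideal of $\mathcal{P}_d$, the subspace $Ev(I)$ is invariant under each $s_j$, so $\mathcal{F}_d^I = Ev(I)^\perp$ is coinvariant and one gets $(s_j^I)^*v = s_j^*v$ for every $v\in\mathcal{F}_d^I$. Since $\sum_j s_j s_j^* = I - P_\Omega \le I$, summing the resulting identities yields $\|\sum_j s_j^I(s_j^I)^*\|\le 1$. Applying Theorem \ref{general_kursovaya} with the zero ideal (i.e.\ Popescu's noncommutative von Neumann inequality) to this tuple produces a completely contractive homomorphism $\pi\colon \mathcal{A}_d\to \mathcal{A}_d^I$ with $\pi(s_j)=s_j^I$.

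Next I would show $\pi$ kills $\overline{I}$. Coinvariance of $\mathcal{F}_d^I$ yields the familiar compression formula $p(s_1^I,\ldots,s_d^I) = p_I\circ p(s_1,\ldots,s_d)\circ i_I$ for every $p\in\mathcal{P}_d$: the cross-terms $p_I\,s_i(1-p_I)\,s_j v$ vanish because $(1-p_I)s_j v\in\overline{Ev(I)}$ is mapped into itself by $s_i$. For $p\in I$ and arbitrary $v\in\mathcal{F}_d^I$, approximate $v$ in $\mathcal{F}(\mathbb{C}^d)$ by vectors of the form $q_n\Omega$ with $q_n\in\mathcal{P}_d$; then $p(s)\, q_n\Omega = (pq_n)(s)\,\Omega\in Ev(I)$ (since $pq_n\in I$ by the right-ideal property), whence by continuity $p(s)v\in\overline{Ev(I)}$ and $p_I p(s)v = 0$. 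Thus $\pi(p)=0$ for $p\in I$, and by continuity $\pi(\overline{I})=0$. So $\pi$ factors through a completely contractive homomorphism $\Phi\colon \mathcal{A}_d/\overline{I}\to\mathcal{A}_d^I$ with $\Phi(s_j+\overline{I})=s_j^I$.

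For the reverse direction, equip $\mathcal{A}_d/\overline{I}$ with the canonical quotient operator algebra structure (see \cite{big_fat_book}) and fix a completely isometric representation $\sigma\colon \mathcal{A}_d/\overline{I}\hookrightarrow \mathcal{B}(H)$. The tuple $T_j := \sigma(s_j+\overline{I})$ is row-contractive because the composition of complete contractions $\mathcal{A}_d \twoheadrightarrow \mathcal{A}_d/\overline{I}\xrightarrow{\sigma}\mathcal{B}(H)$ preserves $\|\sum_j s_j s_j^*\|\le 1$, and for $p\in I$ we have $p(T_1,\ldots,T_d) = \sigma(p(s)+\overline{I})=0$ since $p(s)\in I\subseteq\overline{I}$. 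Theorem \ref{general_kursovaya} then yields a completely contractive homomorphism $\psi\colon \mathcal{A}_d^I\to \mathcal{B}(H)$ with $\psi(s_j^I)=T_j$. The composition $\psi\circ\Phi$ agrees with $\sigma$ on the generators $s_j+\overline{I}$, hence on the dense polynomial subalgebra, hence everywhere; since $\sigma$ is a complete isometry while $\psi$ and $\Phi$ are complete contractions, $\Phi$ is itself a complete isometry. Its image is then closed and contains the dense polynomial subalgebra generated by the $s_j^I$, so $\Phi$ is surjective.

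The main obstacle is the vanishing step: showing that $\pi$ annihilates all of $I$ is where the specific structure $\mathcal{F}_d^I=Ev(I)^\perp$ and the two-sidedness of $I$ are really used, via the density argument for $\mathcal{P}_d\Omega$ in $\mathcal{F}(\mathbb{C}^d)$. Close behind is ensuring the existence of a faithful completely isometric representation of $\mathcal{A}_d/\overline{I}$, which one has to import from the abstract characterization of operator algebra quotients.
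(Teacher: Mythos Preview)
Your proof is correct and follows essentially the same strategy as the paper: apply Theorem~\ref{general_kursovaya} twice and invoke the quotient operator-algebra structure from \cite{big_fat_book} to produce a pair of mutually inverse complete contractions between $\mathcal{A}_d/\overline{I}$ and $\mathcal{A}_d^I$. The one difference worth noting is that you verify by hand (via coinvariance of $\mathcal{F}_d^I$ and the compression formula) both that $(s_1^I,\ldots,s_d^I)$ is a row contraction and that $p(s_1^I,\ldots,s_d^I)=0$ for $p\in I$, whereas the paper gets both facts for free from the implication (ii)$\Rightarrow$(i) of Theorem~\ref{general_kursovaya} applied to the identity representation of $\mathcal{A}_d^I$; this makes the paper's argument shorter, though yours is more self-contained.
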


\begin{proof}
Consider a particular case of the  Theorem \ref{general_kursovaya}: take $I=0$, the space $H=\mathcal{F}(\mathbb{C}^d)$, and the operators $T_k=s_k$ for $k=1,\ldots,d$. We get that ${(s_1,\ldots,s_d)\in \overline{[\mathcal{B}(\mathcal{F}(\mathbb{C}^d))]_1}}$.

Now let $I \subset \mathcal{P}_d$ be an arbitrary two-sided graded ideal. The algebra $\mathcal{A}_d/\overline{I}$ is a quotient algebra of an operator algebra, and therefore is also an operator algebra (see \cite[Theorem 6.3]{big_fat_book}). In other words, there is a completely isometric embedding $\iota: \mathcal{A}_d/\overline{I}\hookrightarrow \mathcal{B}(H)$, $s_i+\overline{I}\mapsto T_i$ for some Hilbert space $H$. In particular, $(T_1,\ldots, T_d) \in \overline{[\mathcal{B}(H)^d]_1}$, because $(s_1,\ldots,s_d)\in \overline{[\ \mathcal{B}(\mathcal{F}(\mathbb{C}^d))]_1}$.
Applying Theorem \ref{general_kursovaya} to the operators $T_1,\ldots, T_k$, we obtain that there exists a completely contractive homomorphism $$\phi_1: \mathcal{A}_d^I\to \operatorname{Im}(\iota)\cong \mathcal{A}_d/\overline{I}$$ such that $\phi_1(s_i^I)=s_i+\overline{I}$.

We apply Theorem \ref{general_kursovaya} to $H=\mathcal{F}_d^I$ and the operators $T_k=s_k^I$ for $k=1,\ldots,d$. We obtain that for each $p\in I$ the condition $p(s_1^I,\ldots,s_d^I)=0$ is satisfied. Now we apply Theorem \ref{general_kursovaya} to $H=\mathcal{F}(\mathbb{C}^d)$ and the operators $T_k=s_k$ for $k=1,\ldots,d$. We obtain that there exists a completely contractive homomorphism $\phi: \mathcal{A}_d\to \mathcal{A}_d^I$ such that $\phi(s_i)=s_i^I$.
Note that $\phi(p)=p(s_1^I,\ldots,s_d^I)=0$ for every $p\in I$. Hence, $\phi(I)=0$, and since $\phi$ is continuous,  $\phi(\overline{I})=0$. Therefore, there exists a completely contractive homomorphism $\phi_2 :\mathcal{A}_d/\overline{I} \to \mathcal{A}_d^I$ such that $\phi_2(s_i+\overline{I})=s_i^ I$.

Since $s_i$ and $s_i^I$ generate dense subalgebras of the corresponding algebras, it follows that $\phi_1\circ \phi_2=\mathbf{1}_{\mathcal{A}_d^I}$ and $\phi_2\circ \phi_1=\mathbf{1}_{\mathcal{A}_d/\overline{I}}$. Hence $\phi_2$ is the required completely isometric isomorphism.
\end{proof}

In what follows, we will identify the algebras $\mathcal{A}_d/\overline{I}$ and $ \mathcal{A}_d^I$ by the isomorphism $\Phi$.

We will use the following definition of a bundle of locally convex algebras, which is close to the definition adopted in \cite{p53} (see also \cite{pirkovskii}).

\begin{definition} \label{definition_bundle}
\emph{A bundle of locally convex algebras} is a quadruple $(E,X,\pi,\mathcal{N})$, where $E$ and $X$ are topological spaces, $\pi:E\to X$ is a continuous open surjection,  for any $x\in X$ on the set $\pi^{-1}(x)$ the structure of an algebra is given, and $\mathcal{N}$ is a directed family of seminorms on $E$. In other words, $\mathcal{N}$ is the set of functions $\sigma: E\to \mathbb{R}_{\geq 0}$ such that $\sigma|_{\pi^{-1 }(x)}$ is a seminorm for any $x$, and for any $ \sigma_1,\sigma_2\in \mathcal{N}$ there exists $ \sigma\in \mathcal{N}$, $C>0 $ such that the conditions $\sigma_1(e)\leq C\sigma(e)$ and $\sigma_2(e)\leq C\sigma(e)$ are satisfied for all $e\in E$.

This quadruple should satisfy the following axioms:

\textbf{A1.} The map $E\times_X E \to E$, $(e_1,e_2)\mapsto e_1+e_2$ is continuous (here $E\times_X E$ denotes the pullback in the category of topological spaces).

\textbf{A2.} The map $E\times \mathbb{C} \to E$, $(e,\lambda)\mapsto \lambda e$ is continuous.

\textbf{A3.} The map $E\times_X E \to E$, $(e_1,e_2)\mapsto e_1e_2$ is continuous.

\textbf{A4.} For a point $x\in X$ denote by $0_x\in E$ the zero vector of the space $\pi^{-1}(x)$. The sets $$\{e\in E: \, \pi(e)\in U, \sigma(e)<\varepsilon\},$$ where $U$ is an open neighborhood of $x$, $\sigma \in \mathcal{N}$ and $\varepsilon>0$, define the local base of $0_x$.

\textbf{A5.} The map $E\to \mathbb{R}$, $e\mapsto \sigma(e)$ is continuous for any $\sigma \in \mathcal{N}$.

Restricting functions from the family $\mathcal{N}$ to the fiber $\pi^{-1}(x)$, we obtain a family of seminorms on $\pi^{-1}(x)$.
It defines a topology which automatically coincides with the topology on $\pi^{-1}(x)$ induced from $E$ (see \cite[Lemma A.17]{pirkovskii}).

\end{definition}

If the spaces $E$, $X$ and the family of seminorms $\mathcal{N}$ satisfy some natural conditions, then, by \cite[Theorem~3.2]{p53}, the definition from \cite{p53} is equivalent to our definition with the axiom  A4 removed. In other words, under these conditions, bundles in our sense are continuous bundles in the sense of \cite{pirkovskii}. On the other hand, in the case when the family $\mathcal{N}$ consists of a single norm and the fibers of the bundle are complete, our definition reduces to the definition of a Banach bundle from \cite{p50}.

Bundles of locally convex algebras admit the following equivalent description in terms of the algebras of their continuous sections. For simplicity, we present it in the particular case of Banach bundles; this will suffice for our purposes.

\begin{definition} \label{definition_banach_2}
\emph{A continuous field of Banach algebras} is a quadruple $(E,X,\pi,\Gamma)$, where $E$ is a set without topology, $X$ is a topological space, $\pi:E\to X$ is a surjection, and for any $x\in X$ on the set $\pi^{-1}(x)$ the structure of a Banach algebra is given. $\Gamma$ is some set of functions $\gamma: X\to E$ such that $\pi\circ \gamma=\operatorname{id}_X$ and the following axioms hold:

\textbf{B1.} For any $\gamma\in \Gamma$ the map $x\mapsto \|\gamma(x)\|$ is continuous.

\textbf{B2.} The set $\Gamma$ is closed under the operation $(\gamma,\delta)\mapsto \gamma+\delta$, where $(\gamma+\delta)(x)=\gamma(x)+\delta (x)$.

\textbf{B3.} For any $\lambda \in \mathbb{C}$ the set $\Gamma$ is closed under the operation $\gamma \mapsto \lambda\gamma$, where ${(\lambda\gamma)(x) =\lambda\gamma(x)}$.

\textbf{B4.} The set $\Gamma$ is closed under the operation $(\gamma,\delta)\mapsto \gamma\delta$, where $(\gamma\delta)(x)=\gamma(x)\delta( x)$.

\textbf{B5.} If the function $\gamma': X \to E$ satisfies the condition $\pi\circ\gamma'=\operatorname{id}_X$, and for any $x\in X$ and $\varepsilon >0$ there exists $\gamma\in \Gamma$ such that $\|\gamma'(x')-\gamma(x')\| < \varepsilon$ for all $x'$ in some neighborhood of $x$, then $\gamma' \in \Gamma$.

\textbf{B6.} For any $x \in X$ the set $\{\gamma(x) | \gamma \in \Gamma \}$ is dense in $\pi^{-1}(x)$.

\end{definition}

As shown in \cite[Theorem 13.18]{p50} (see also \cite[Theorem C.25]{p177}) for every continuous field of Banach algebras $(E,X,\pi,\Gamma)$ there exists a unique topology  on the set $ E$ that turns $(E,X,\pi, \|\cdot \|)$ into a bundle of Banach algebras such that all sections from $\Gamma$ are continuous. And vice versa, if the space $X$ is locally compact or paracompact, then each bundle $(E,X,\pi, \|\cdot \|)$ of Banach algebras $(E,X,\pi,\Gamma)$, where $\Gamma$ is the algebra of all continuous sections of the bundle $E$, corresponds to a continuous field of Banach algebras (see \cite[Appendix C]{p50}). Thus, under the indicated conditions on $X$, the concepts of a bundle of Banach algebras and a continuous field of Banach algebras are essentially equivalent.

For each two-sided graded ideal $I=\bigoplus\limits_{k\geq 1}I^k\subset \mathcal{P}_d$ denote by $p_I$ the orthogonal projection onto the subspace $Ev(I)^{\perp}\subset \mathcal{F}(\mathbb{C}^d)$.

\begin{remark}
It is clear that $\operatorname{Ker}(p_I)=\overline{Ev(I)}=\overline{\bigoplus\limits_{k\geq 1}Ev(I^k)}$. Hence $Ev(I^k)=\operatorname{Ker}(p_I)\cap (\mathbb{C}^d)^{\otimes k}$. Therefore, taking into account the injectivity of the map $Ev$, it can be seen that  the projection $p_I$ uniquely determines all components of $I^k$, and hence the ideal $I$ is also uniquely determined.
\end{remark}

%\begin{definition}
Denote by $\mathcal{M}$ the subset of $\mathcal{B}(\mathcal{F}(\mathbb{C}^d))$ consisting of all such operators $p_I$. We consider $\mathcal{M}$ as a topological space with the topology induced from the strong operator topology on $\mathcal{B}(\mathcal{F}(\mathbb{C}^d))$.
%\end{definition}

\section{Bundles of function algebras on subvarieties of a closed\\ noncommutative ball}

\begin{theorem}\label{bundle_banach}
Consider a topological space $X$ and a continuous map $\phi: X\to \mathcal{M}$. Then there exists a bundle of Banach algebras $(E,X,\pi,\|\cdot\|)$ such that $\pi^{-1}(x)\cong \mathcal{A}_d/\overline{I_x} $, where for each $x\in X$ the ideal $I_x\subset \mathcal{P}_d$ is uniquely determined by the condition $\phi(x)=p_{I_x}$.
\end{theorem}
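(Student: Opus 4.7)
The plan is to exploit the equivalent description of Banach bundles in terms of continuous sections (Definition~\ref{definition_banach_2}), so that we never have to specify a topology on the total space directly. Let $E=\bigsqcup_{x\in X}\mathcal{A}_d/\overline{I_x}$ with the obvious projection $\pi$, each fiber equipped with the Banach algebra structure provided by the preceding corollary (which identifies $\mathcal{A}_d/\overline{I_x}$ isometrically with $\mathcal{A}_d^{I_x}\subset \mathcal{B}(\mathcal{F}_d^{I_x})$). The natural candidates for sections are the polynomial sections $\gamma_p\colon x\mapsto p+\overline{I_x}$, one for each $p\in \mathcal{P}_d$. They are closed under all algebraic operations ($\gamma_{p+q}=\gamma_p+\gamma_q$, $\gamma_{\lambda p}=\lambda\gamma_p$, $\gamma_{pq}=\gamma_p\gamma_q$), and their values at each $x$ span a dense subalgebra of $\mathcal{A}_d/\overline{I_x}$ because $\mathcal{P}_d$ is dense in $\mathcal{A}_d$. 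Define $\Gamma$ as the set of all sections $\gamma'\colon X\to E$ which are locally uniformly approximable by polynomial sections in the sense of axiom B5. Then B2--B6 follow essentially by construction (B2--B4 using the triangle inequality and boundedness argument, B5 by idempotence of the closure operation, and B6 from the density noted above); only B1 requires genuine work.

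To verify B1 I translate the quotient norm into Hilbert-space language. The key identity is
$$\|p+\overline{I_x}\|=\|p_{I_x}Pp_{I_x}\|_{\mathcal{B}(\mathcal{F}(\mathbb{C}^d))},\qquad P:=p(s_1,\ldots,s_d),$$
which follows from the preceding corollary together with the fact that $Ev(I_x)$ is $s_j$-invariant (because $I_x$ is a left ideal); this makes the compressions multiplicative, so $s_\alpha^{I_x}=p_{I_x}s_\alpha|_{\mathcal{F}_d^{I_x}}$ for every word $\alpha$. Because $\phi$ is strongly continuous and each $p_{I_x}$ is graded, decomposing as $\bigoplus_m p_{I_x}^{(m)}$ with $p_{I_x}^{(m)}$ acting on the finite-dimensional $(\mathbb{C}^d)^{\otimes m}$, each component map $x\mapsto p_{I_x}^{(m)}$ is actually norm-continuous, since SOT coincides with the norm topology on finite-dimensional spaces. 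In particular the dimensions $\dim(\mathcal{F}_d^{I_x})_m$ are locally constant, so on a neighborhood of any $x_0\in X$ one can pick continuously varying orthonormal frames of each $(\mathcal{F}_d^{I_x})_m$, yielding a local trivialization of the subproduct system $\{(\mathcal{F}_d^{I_x})_m\}_m$.

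The main obstacle is promoting this pointwise, blockwise continuity to continuity of the full operator norm $x\mapsto \|A_x\|$, where $A_x:=p_{I_x}Pp_{I_x}$. Writing $P=\sum_{j=0}^n P_j$ with $P_j$ homogeneous of degree $j$ and $n:=\deg p$, the operator $A_x$ is banded of degree $\leq n$ with respect to the grading of $\mathcal{F}(\mathbb{C}^d)$. For each $N$ the truncation $Q_N A_x Q_N$ (where $Q_N$ projects onto $\bigoplus_{m\leq N}(\mathbb{C}^d)^{\otimes m}$) is finite-rank, commutes nicely with the graded projection $p_{I_x}$, depends norm-continuously on $x$, and satisfies $\|Q_N A_x Q_N\|\nearrow \|A_x\|$ as $N\to \infty$; this already gives lower semi-continuity of $x\mapsto \|A_x\|$ as a supremum of continuous functions. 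The delicate part, and the technical heart of the theorem, is to show that this monotone convergence is \emph{locally uniform} in $x$; combined with continuity of each truncation, such a uniform tail estimate would yield continuity of $\|A_x\|$. A plausible route is to use the banded structure together with the local trivialization to obtain equicontinuous bounds on the block operators $p_{I_x}^{(m+j)}P_j|_{(\mathcal{F}_d^{I_x})_m}$, uniformly in $m$, and then to control the tail $\|A_x\|-\|Q_N A_x Q_N\|$ by standard norm estimates for banded row contractions; alternatively one can argue by weak compactness on sequences of approximate maximizers and use invariance of $Ev(I_x)$ under $s_j$ to rule out escape of mass. Once B1 is secured, the required bundle structure on $E$ is produced by invoking the equivalence between continuous fields of Banach algebras and Banach bundles recalled after Definition~\ref{definition_banach_2} (\cite[Theorem 13.18]{p50}).
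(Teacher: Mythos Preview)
Your overall strategy matches the paper's: build the bundle as a continuous field, use the polynomial sections $\gamma_p(x)=p+\overline{I_x}$, and reduce everything (via \cite[Proposition 10.2.3]{dixmie}) to showing that $x\mapsto\|p+\overline{I_x}\|$ is continuous for each fixed $p\in\mathcal{P}_d$. Your identification $\|p+\overline{I_x}\|=\|\phi(x)\,P\,\phi(x)\|$ with $P=p(s_1,\dots,s_d)$ is correct (the invariance of $\overline{Ev(I_x)}$ under the $s_j$ indeed makes the compression multiplicative), and your lower semicontinuity argument via $\|Q_N A_xQ_N\|\nearrow\|A_x\|$ is clean and perfectly valid---arguably cleaner than the paper's version, which chases an approximate maximizing vector by hand.

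The gap is in the upper semicontinuity. You propose to show that the monotone convergence $\|Q_NA_xQ_N\|\to\|A_x\|$ is \emph{locally uniform} in $x$, but neither of the routes you sketch actually delivers this. The ``banded structure plus equicontinuous block bounds'' idea runs into the problem that although each block $\phi(x)^{(m+j)}P_j\,\phi(x)^{(m)}$ is norm-continuous in $x$, there is no uniform-in-$m$ control: the blocks live on different finite-dimensional spaces of growing dimension, and nothing in the hypotheses prevents the norm of $A_x$ from being realized arbitrarily far out in the grading in a way that drifts with $x$. The weak-compactness alternative is too vague to evaluate. So as written the proof is incomplete at exactly the point you flag as ``the technical heart''.

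The paper avoids this difficulty entirely by arguing on the \emph{quotient} side rather than the \emph{compression} side. Given $\varepsilon>0$, pick $q\in I_{x_0}$ (a polynomial, necessarily of some finite degree $k$) with $\|p+q\|_{\mathcal{A}_d}<\|p+\overline{I_{x_0}}\|+\varepsilon/2$. Now use the continuity of $\phi$ restricted to the finite-dimensional space $\mathcal{F}^k=\bigoplus_{t\le k}(\mathbb{C}^d)^{\otimes t}$ to define $\psi(x)=i_k\bigl(\phi(x)\,Ev(q)\bigr)\in\mathcal{P}_d$; then $\psi(x_0)=0$ and $Ev(\psi(x)-q)=(\phi(x)-1)Ev(q)\in\ker\phi(x)$, so $\psi(x)-q\in I_x$. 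Hence $p+q-\psi(x)$ represents $p+\overline{I_x}$, and for $x$ close to $x_0$ its norm is within $\varepsilon$ of $\|p+\overline{I_{x_0}}\|$. In other words: rather than controlling an infinite supremum uniformly, one transports a \emph{single finite-degree} witness $q\in I_{x_0}$ to nearby ideals $I_x$. This is the missing idea; once you have it, upper semicontinuity is a few lines.
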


\begin{proof}

For any point $x\in X$ we put $\mathcal{A}^{\phi}_d(x)=\mathcal{A}_d^{I_x}=\mathcal{A}_d/\overline{I_x}$ , where $p_{I_x}=\phi(x)$. Let ${\pi_d^\phi(x):\mathcal{A}_d\to \mathcal{A}^{\phi}_d(x)}$ be the canonical projection. We set $E=\bigsqcup\limits_{x\in X}\mathcal{A}^{\phi}_d(x)$ and also define the projection $\pi:E\to X$, ${\pi(\mathcal{A}^{\phi}_d(x))=x}$. We define $\Gamma$ as the set of functions $\gamma: X\to E$ such that $\pi\circ \gamma=\operatorname{id}_X$ and
for any $x_0\in X$, $\varepsilon >0$ there exist $p\in \mathcal{P}_d$ and $U\ni x_0$ such that for all $x\in U$ the following inequality holds: $\| \gamma(x)-\pi_d^\phi(x)( p^\phi)\|_{\mathcal{A}^{\phi}_d(x)}<\varepsilon$. Let us prove that $(E,X,\pi,\Gamma)$ is a bundle in the sense of definition~\ref{definition_banach_2}.

Note that the set of sections $\{\pi_d^\phi(x)( p): \,p\in \mathcal{P}_d\}$ forms an algebra. Therefore, according to \cite[Proposition~10.2.3]{dixmie}, it suffices to prove that the function $x\to \|\pi_d^\phi(x)( p)\|$ is continuous for any $p\in \mathcal{ P}_d$.

We first prove that this function is lower semicontinuous.
Take $x_0\in X$, $\varepsilon>0$. We will assume that $\|\pi_d^\phi(x_0)(p)\|>0$.
We know that $\mathcal{A}^{\phi}_d(x_0)= \mathcal{A}_d^{I_{x_0}}\subset \mathcal{B}(\operatorname{Im}(\phi( x_0)))$. Hence, there exists a vector ${v\in \operatorname{Im}(\phi(x_0))\subset \mathcal{F}(\mathbb{C}^d)}$ such that $\|v\|_{\mathcal {F}(\mathbb{C}^d)}=1$ and $$\|\pi_d^{\phi}(x_0)( p)\|_{\mathcal{A}^{\phi}_d( x_0)}\leq \|\pi_d^{\phi}(x_0)(p)(v)\|_{\mathcal{F}(\mathbb{C}^d)}+\frac{\varepsilon}{ 4}.$$
Since the image of $Ev$ is dense in $\mathcal{F}(\mathbb{C}^d)$, we can assume that $v=Ev(q)$, where $q\in \mathcal{P}_d$ .

Consider $m=\operatorname{deg}(p)+\operatorname{deg}(q)$. Note that since $\operatorname{Ker}(\phi(x))$ is the closure of $Ev(I_x)$ and the ideal $I_x$ is graded, then for any $x\in X$ the operator $\phi (x)$ is graded.
We set $\mathcal{F}^m=\bigoplus\limits_{0\leq t\leq m}(\mathbb{C}^d)^{\otimes t}$ and define the map $\phi_m: X\to \mathcal{B}(\mathcal{F}^m)$ such that  $\phi_m(x)=\phi(x)|_{\mathcal{F}^m}$ for all $x\in X$. Since the space $\mathcal{B}(\mathcal{F}^m)$ is finite-dimensional, the map $\phi_m$ is continuous with respect to the operator norm.

We also define  operators $s_j^m: \mathcal{F}^m \to \mathcal{F}^m,$ $ s_j^m(e_{i_1}\otimes\ldots\otimes e_{i_k})=s_j( e_{i_1}\otimes\ldots\otimes e_{i_k})$ for $k<m$ and $s_j^m(e_{i_1}\otimes\ldots\otimes e_{i_k})=0$ for $k =m$. Obviously, they are continuous.

For all sufficiently small $\varepsilon$ the following inequality holds: $$\frac{\|\pi_d^{\phi}(x_0)( p)\|_{\mathcal{A}^{\phi}_d(x_0)}- \frac{\varepsilon}{2}}{\|\pi_d^{\phi}(x_0)( p)\|_{\mathcal{A}^{\phi }_d(x_0)}-\varepsilon}> 1=\|v\|_{\mathcal{F}(\mathbb{C}^d)}=\|\phi(x_0)(v)\|_{\mathcal{F}(\mathbb{C} ^d)}.$$ Therefore, there exists a neighborhood $U_1\ni x_0$ such that for any $x\in U_1$ $$\|\phi(x)(v)\|_{\mathcal{F }(\mathbb{C}^d)}\leq\frac{\|\pi_d^{\phi}(x_0)( p)\|_{\mathcal{A}^{\phi}_d(x_0)} -\frac{\varepsilon}{2}}{\|\pi_d^{\phi}(x_0)( p)\|_{\mathcal{A}^{\phi }_d(x_0)}-\varepsilon} .$$

Consider $p=\sum\limits_{i=1}^{N} \lambda_{i} s_{k_{1,i}}\ldots s_{k_{n_i,i}}$. The map $$x\mapsto (\sum\limits_{i=1}^{N} \lambda_{i} \phi_m(x)s^m_{k_{1,i}}\phi_m(x)s^m_{ k_{2,i}}\phi_m(x)\ldots \phi_m(x)s^m_{k_{n_i,i}}\phi_m(x))(v)\in \mathcal{F}^m$$ is continuous.
Note that $$(\sum\limits_{i=1}^{N} \lambda_{i} \phi_m(x)s^m_{k_{1,i}}\phi_m(x)s^m_{k_ {2,i}}\phi_m(x)\ldots \phi_m(x)s^m_{k_{n_i,i}}\phi_m(x))(v)=\pi_d^{\phi}(x)( p)( \phi(x)(v)).$$
Therefore, there exists a neighborhood $U_2\ni x_0$ such that for all points $x\in U_2$ the following inequality holds $$\|\pi_d^{\phi}(x)(p)( \phi(x)(v) )-\pi_d^{\phi}(x_0)(p)( \phi(x_0)(v))\|_{\mathcal{F}(\mathbb{C}^d)}=\|\pi_d^ {\phi}(x)(p)( \phi(x)(v))-\pi_d^{\phi}(x_0)(p)( v)\|_{\mathcal{F}(\mathbb{ C}^d)}<\frac{\varepsilon}{4}.$$

Let $U=U_1\cap U_2$. Then for all $ x\in U$ the following inequalities hold$$ \|\pi_d^{\phi}(x_0)(p)(v)\|_{\mathcal{F}(\mathbb{C}^d)} \leq \|\pi_d^{\phi}(x)( p)( \phi(x)(v))\|_{\mathcal{F}(\mathbb{C}^d)}+\frac{ \varepsilon}{4}\leq \|\pi_d^{\phi}(x)( p)\|_{\mathcal{A}^{\phi}_d(x)}\cdot \|\phi(x )(v)\|_{\mathcal{F}(\mathbb{C}^d)}+\frac{\varepsilon}{4};$$
therefore,
$$\|\pi_d^{\phi}(x_0)( p)\|_{\mathcal{A}^{\phi}_d(x_0)}\leq \|\pi_d^{\phi}(x_0) (p)(v)\|_{\mathcal{F}(\mathbb{C}^d)}+\frac{\varepsilon}{4}\leq \|\pi_d^{\phi}(x)( p)\|_{\mathcal{A}^{\phi}_d(x)}\cdot \|\phi(x)(v)\|_{\mathcal{F}(\mathbb{C}^d )}+\frac{\varepsilon}{2}.$$
Hence$$\|\pi_d^{\phi}(x)( p)\|_{\mathcal{A}^{\phi}_d(x)}\geq \frac{\|\pi_d^{\phi}(x_0)( p)\|_{\mathcal{A}^{\phi}_d(x_0)}-\frac{\varepsilon}{2}}{\|\phi(x)(v)\|_{\mathcal{F}(\mathbb{C}^d)}}\geq \|\pi_d^{\phi}(x_0)( p)\|_{\mathcal{A}^{\phi}_d(x_0)}-\varepsilon.$$
Thus the function $x\mapsto \|\pi_d^{\phi}(x)( p)\|$ is lower semicontinuous.

Let us prove that this function is upper semicontinuous.
It follows from the definition of $\mathcal{A}^{\phi}_d(x_0)$ that there exists $q\in \cap I_{x_0}$ such that $\|\pi_d^{\phi}(x_0)( p)\|_{\mathcal{A}^{\phi}_d(x_0)}\geq \|p+q\|_{\mathcal{A}_d}-\frac{\varepsilon}{2}$. Denote $\deg(q)=k$. There is an embedding of vector spaces
$$i_k: \mathcal{F}^k\hookrightarrow \mathcal{A}_d, \quad e_{i_1}\otimes\ldots\otimes e_{i_t}\mapsto s_{i_1}\circ\ldots\circ s_{ i_t}.$$
Note that its domain is a finite-dimensional (Hilbert) space, which means it is continuous. It is also clear that $Ev\circ i_k$ is the canonical embedding  $\mathcal{F}^k\hookrightarrow \mathcal{F}(\mathbb{C}^d)$. Since $Ev(q)\in \mathcal{F}^k$, we can define a map $\psi: X\to \mathcal{A}_d$, $x\mapsto i_k(\phi(x)Ev( q))$. It is continuous as a composition of continuous maps.

Note that $\psi(x_0)=i_k(\phi(x_0)Ev(q))=i_k(0)=0$. Hence, there exists a neighborhood $U_3\ni x_0$ such that $\|\psi(x)\|<\frac{\varepsilon}{2}$ is satisfied for all $x\in U_3$.
Note that $$Ev(\psi(x)-q)=Ev(i_k(\phi(x)Ev(q)))-Ev(q)=(\phi(x)-1)(Ev(q ))\in \operatorname{Ker}(\phi(x))\cap \mathcal{F}^k.$$
Hence, $Ev(\psi(x)-q)\in Ev(I_x)$, and hence $\psi(x)-q\in I_x$. This implies that for all $x\in U_3$ the following inequalities hold: $$\|\pi_d^{\phi}(x)(p)\|_{\mathcal{A}^{\phi}_d(x)} \leq \|p+q-\psi(x)\|_{\mathcal{A}_d}\leq \|p+q\|_{\mathcal{A}_d}+\frac{\varepsilon}{ 2}\leq \|\pi_d^{\phi}(x_0)( p)\|_{\mathcal{A}^{\phi}_d(x_0)}+\varepsilon.$$
Thus the function $x\mapsto \|\pi_d^{\phi}(x)( p)\|$ is upper semicontinuous. Taking into account the lower semicontinuity of this function proved above, we conclude that it is continuous.
\end{proof}

\section{Bundles of function algebras on subvarieties of an open\\ noncommutative ball}

As an auxiliary result, we first prove a non-commutative analog of the fact that an open ball is the union of closed balls embedded in it. To do this, we introduce some notation.

Take $r>0$. For every $0\leq x<r$ we define $\mathcal{A}_d^x=\mathcal{A}_d$.

Then take $0< x<y<r$. It follows from the  Theorem \ref{general_kursovaya} that there exists a contraction homomorphism $\phi_{xy}: \mathcal{A}_d^{y}=\mathcal{A}_d \to \mathcal{A}_d=\mathcal{A }_d^{x}$ such that $\phi_{xy}(s_i)=\frac{x}{y}s_i$ for all $i=1,\ldots,d$.

Consider a graded ideal $I\subset \mathcal{P}_d$. It is easy to see that the homomorphisms $\phi_{xy}$ ($0<x<y<r$) induce homomorphisms $\widetilde{\phi_{xy}}: \mathcal{A}_d^y/\overline{I} \to \mathcal{A}_d^x/\overline{I}$.
We identify the algebra $\mathcal{P}_d$ with a dense subalgebra in $\mathcal{F}_d^r$ by the  embedding $s_{i}\mapsto z_i$ ($i=1,\ldots,d$). The closure of the ideal $I$ in the algebra $\mathcal{F}_d^r$ will also be denoted by $\overline{I}$, this will not lead to confusion.

\begin{theorem} \label{inverse_limit}
For any graded ideal $I\subset \mathcal{P}_d$ there exists an isomorphism of locally convex algebras $${\underleftarrow\lim(\mathcal{A}_d^x/\overline{I}, \widetilde{\phi_{xy}})_{0< x<r}\cong \mathcal{F}_d^r/\overline{I}}.$$
In particular, there exists an isomorphism of locally convex algebras $\underleftarrow\lim(\mathcal{A}_d^x, \phi_{xy})_{0<x<r}\cong \mathcal{F}_d^r$.
\end{theorem}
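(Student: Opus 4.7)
The plan is to exhibit an explicit candidate isomorphism, verify it is a continuous bijective algebra homomorphism between Fréchet algebras, and then invoke the open mapping theorem to promote it to a topological isomorphism. Concretely, I would define
$$\Psi: \mathcal{F}_d^r/\overline{I} \longrightarrow \underleftarrow\lim\bigl(\mathcal{A}_d^x/\overline{I},\widetilde{\phi_{xy}}\bigr)_{0<x<r}, \qquad F+\overline{I} \mapsto (F^x + \overline{I})_x,$$
where $F^x = \sum_\alpha x^{|\alpha|} a_\alpha s_\alpha \in \mathcal{A}_d$ for $F = \sum_\alpha a_\alpha z_\alpha \in \mathcal{F}_d^r$. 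Combining the formula for $\|F\|_x$ displayed just before Theorem \ref{another_definition_of_A_d} with Theorem \ref{another_definition_of_A_d} itself shows that $F \mapsto F^x$ is a continuous algebra homomorphism $\mathcal{F}_d^r \to \mathcal{A}_d$ of norm $\|\cdot\|_x$. Gradedness of $I$ means rescaling each graded piece preserves $I$, so this map carries $I$, and hence $\overline{I}_{\mathcal{F}_d^r}$, into $\overline{I}_{\mathcal{A}_d}$, while coherence under $\widetilde{\phi_{xy}}$ is immediate from $\phi_{xy}(F^y) = F^x$. As a warm-up I would treat the case $I = 0$: given a coherent family $(a_x)$ in $\mathcal{A}_d$, Theorem \ref{another_definition_of_A_d} attaches to each $a_x$ a formal series $G^{(x)} = \sum_\alpha c_\alpha^x z_\alpha$; the identity $\phi_{xy}(a_y) = a_x$ forces $c_\alpha^x = (x/y)^{|\alpha|} c_\alpha^y$, so that $a_\alpha := c_\alpha^y / y^{|\alpha|}$ does not depend on $y$. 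Setting $F := \sum_\alpha a_\alpha z_\alpha$ and applying Theorem \ref{another_definition_of_A_d} to each $a_y$, one gets $\sum_k (zy)^k \|F_k(s_1,\ldots,s_d)\| = \sum_k z^k \|G^{(y)}_k(s_1,\ldots,s_d)\| < \infty$ for all $z<1$; letting $y \nearrow r$ and invoking the criterion for $\mathcal{F}_d^r$ (Theorem~1) puts $F$ in $\mathcal{F}_d^r$, which establishes $\underleftarrow\lim \mathcal{A}_d \cong \mathcal{F}_d^r$.

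For injectivity of $\Psi$, suppose $F^x \in \overline{I}_{\mathcal{A}_d}$ for every $x<r$. Fix $y<r$, $\varepsilon>0$, and pick $y<y'<r$. Choose a polynomial $p = \sum b_\alpha s_\alpha \in I$ with $\|F^{y'} - p\|_{\mathcal{A}_d} < \varepsilon$, and set $q := \sum (y')^{-|\alpha|} b_\alpha z_\alpha$: gradedness of $I$ puts $q \in I$, and $q^{y'} = p$, so
$$\|F - q\|_y \leq \|F - q\|_{y'} = \|(F-q)^{y'}\|_{\mathcal{A}_d} < \varepsilon,$$
witnessing $F \in \overline{I}_{\mathcal{F}_d^r}$.

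The hard part is surjectivity, which I would resolve by Mittag-Leffler. Replace the index set by the cofinal countable chain $(y_n)_{n\geq 1}$, $y_n \nearrow r$; the restricted transition maps $\phi_{y_n,y_{n+1}}|_{\overline{I}_{\mathcal{A}_d}}$ have dense image, because they are already surjective on the polynomial ideal $I$ and $I$ is dense in $\overline{I}_{\mathcal{A}_d}$. Since $\overline{I}_{\mathcal{A}_d}$ is a Banach space, the topological Mittag-Leffler theorem yields $\underleftarrow\lim^{\,1} \overline{I}_{\mathcal{A}_d} = 0$, so the short exact sequence of inverse systems $0 \to \overline{I}_{\mathcal{A}_d} \to \mathcal{A}_d \to \mathcal{A}_d/\overline{I} \to 0$ stays exact under $\underleftarrow\lim$. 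Combining with $\underleftarrow\lim \mathcal{A}_d = \mathcal{F}_d^r$ (the $I=0$ step) and $\underleftarrow\lim \overline{I}_{\mathcal{A}_d} = \overline{I}_{\mathcal{F}_d^r}$ (the injectivity step) produces the bijection. Finally, $\mathcal{F}_d^r/\overline{I}$ is Fréchet as a quotient of a Fréchet algebra by a closed ideal, the countable inverse limit on the right is Fréchet as a projective limit of Banach algebras, and $\Psi$ is a continuous algebra homomorphism, so the open mapping theorem for Fréchet spaces promotes $\Psi$ to a topological algebra isomorphism.
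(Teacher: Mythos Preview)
Your argument is correct, but it follows a different path from the paper's. The paper treats both statements as direct instances of the general machinery of Arens--Michael decompositions developed in Allan's paper \cite{allan}: for $I=0$ it observes that the completion of $(\mathcal{F}_d^r,\|\cdot\|_x)$ is isometrically $\mathcal{A}_d^x$ via $F\mapsto F^x$, then invokes \cite[Theorem~3]{allan} to identify $\mathcal{F}_d^r$ with the inverse limit; for general $I$ it simply cites \cite[Theorem~6]{allan}, which says that passing to a quotient by a closed ideal commutes with the Arens--Michael decomposition. In both cases a countable cofinal sequence $r_n=r(1-1/n)$ is used to reduce to a sequential limit. By contrast, you build the isomorphism explicitly: coefficient-matching for the $I=0$ case, a direct density argument for injectivity, topological Mittag--Leffler for the vanishing of $\varprojlim^1\overline{I}$ and hence surjectivity, and finally the open mapping theorem. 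What you are effectively doing is reproving the content of Allan's Theorem~6 in this particular situation. Your route is more self-contained and makes the mechanism transparent (in particular, the role of gradedness of $I$ in ensuring both that $F\mapsto F^x$ preserves $\overline{I}$ and that the transition maps restrict to maps with dense image on $\overline{I}$ is made explicit), while the paper's route is shorter and highlights that the result is a formal consequence of general Fr\'echet-algebra principles once the seminorm structure of $\mathcal{F}_d^r$ is identified.
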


\begin{proof}
First, we prove the theorem in the case $I=0$.

Denote by $\Phi: \mathcal{A}(\mathcal{B}(\mathcal{X})_1^d) \to \mathcal{A}_d$ the isomorphism of Banach algebras (see Theorem~\ref{another_definition_of_A_d}) given by the conditions $z_i\mapsto s_i$, $i=1,\ldots,d$.
Denote by $A_x$ the algebra $\mathcal{F}_d^r$ considered as a normed algebra with the norm $\|\cdot \|_x$. Let $\pi_x:\mathcal{F}_d^r\to A_x$ be canonical homomorphisms.
For each $x<r$ we define a map ${\theta^0_x: A_x\to \mathcal{A}^x(\mathcal{B}(\mathcal{X})_1^d)}$, $F\mapsto F^x$. It is clear that $\theta^0_x$ is an isometric homomorphism. Hence $$\theta_x=\Phi\circ \theta^0_x: A_x\to \mathcal{A}_d=\mathcal{A}^x_d$$ is also an isometric homomorphism. Note that ${s_i=\theta_x(\frac{1}{x}z_i)}$ generate a dense subalgebra in $\mathcal{A}^x_d$, and hence the image of the homomorphism $\theta_x$ is dense in $\mathcal {A}^x_d$. Hence, since the algebra $\mathcal{A}^x_d$ is Banach, it is isomorphic to the completion of the algebra $A_x$ with respect to the norm $\|\cdot \|_x$.

Let us introduce the notation $r_n:=r(1-\frac{1}{n})$. It is clear that the sequence $\{r_{n}\}_{n\in \mathbb{N}}$ is increasing. Note that the topology on $\mathcal{F}_d^r$ can be generated by a countable family of seminorms $\{\|\cdot\|_{x}\}_{x=r_n, \, n\in \mathbb{N }}$. We know that $\|F\|_{r_n}\leq \|F\|_ {r_{n+1}}$. Therefore, the canonical homomorphism $\rho^0_n: A_{r_{n+1}}\to A_{r_{n}}$ is continuous. Hence, it extends to a continuous homomorphism $\rho_n: \mathcal{A}_d^{r_{n+1}}\to \mathcal{A}_d^{r_{n}}$ given by $s_i\mapsto \frac{r_{n}}{r_{n+1}}s_i$, $i=1,\ldots,d$.

We use \cite[Theorem 3]{allan} and obtain that the algebra $\mathcal{F}_d^r$ together with the family of homomorphisms ${\{\theta_x\circ \pi_x: \mathcal{F}_d^r\to \mathcal{A}_d^x\}_{0<x<r}}$ is the inverse limit of the system $(\mathcal{A}_d^{r_{n}},\rho_n)_{n\in \mathbb {N}}$ in the category of locally convex algebras. This isomorphism defines the Arens-Michael representation (see \cite{allan}) of the algebra $\mathcal{F}_d^r$.

For natural numbers $m>n$ denote by $\rho_{m,n}:\mathcal{A}_d^{r_{m}}\to \mathcal{A}_d^{r_{n}}$ the continuous homomorphism given by the conditions $s_i\mapsto \frac{r_n}{r_m}s_i$, $i=1,\ldots,d$. It is clear that $\rho_{m,n}=\rho_n\circ \rho_{n+1}\circ \ldots\circ \rho_{m-1}$. Note that the homomorphism $\rho_{m,n}$ coincides with the homomorphism $\phi_{r_{m}, r_n}$. Therefore, there are isomorphisms of locally convex algebras $$\mathcal{F}_d^r\cong \underleftarrow\lim(\mathcal{A}_d^{r_{n}},\rho_n)_{n\in \mathbb{N }}\cong\underleftarrow\lim(\mathcal{A}_d^{{r}_{n}},\rho_{m,n})_{m>n}\cong \underleftarrow\lim(\mathcal{ A}_d^x, \phi_{xy})_{x={r}_{n}, \, n\in \mathbb{N}}.$$
Note that the subset $\{{r}_{n}\}_{n\in \mathbb{N}}\subset (0,r)$ is cofinal. Therefore, the inverse limits of the systems $(\mathcal{A}_d^x, \phi_{xy})_{0< x<r}$ and $(\mathcal{A}_d^x, \phi_{xy})_ {x={r}_{n}, \, n\in \mathbb{N}}$ are isomorphic. Hence, there exists an isomorphism of locally convex algebras ${\underleftarrow\lim(\mathcal{A}_d^x, \phi_{xy})_{0<x<r}\cong \mathcal{F}_d^r}$.

We now prove the theorem for an arbitrary graded ideal $I\subset \mathcal{P}_d$.

Denote by $\pi_n: \mathcal{F}_d^r\to \mathcal{A}_d^{{r}_{n}}$ the canonical projections. Then $\overline{\pi_n(\overline{I})}=\overline{\pi_n(I)}=\overline{I}\subset \mathcal{A}_d^{{r}_{n}}$ is closure of the ideal $I$ with respect to the operator norm.
Denote by ${\widetilde{\rho_n}: A_d^{r_{n+1}}/\overline{I}\to A_d^{r_{n}}/\overline{I}}$ and ${\widetilde {\rho_{m,n}}: A_d^{r_{m}}/\overline{I}\to A_d^{r_{n}}/\overline{I}}$ the homomorphisms induced by $\rho_n$ and $\rho_{m,n}$, respectively.
We use \cite[Theorem 6]{allan} and obtain that there is an isomorphism of locally convex algebras $\mathcal{F}_d^r/\overline{I}\cong \underleftarrow\lim(\mathcal{A}_d^{ r_{n}}/\overline{I},\widetilde{\rho_n})_{n\in \mathbb{N}}$.

Similarly to the case $I=0$, we obtain a chain of isomorphisms
$$\mathcal{F}_d^r/\overline{I}\cong \underleftarrow\lim(\mathcal{A}_d^{r_{n}}/\overline{I},\widetilde{\rho_n}) _{n\in \mathbb{N}}\cong\underleftarrow\lim(\mathcal{A}_d^{{r}_{n}}/\overline{I},\widetilde{\rho_{m,n }})_{m>n}\cong\underleftarrow\lim(\mathcal{A}_d^x/\overline{I}, \widetilde{\phi_{xy}})_{x={r}_{n}, \, n\in \mathbb{N}}.$$ From cofinality of $\{{r}_{n}\}_{n\in \mathbb{N}}\subset (0,r) $ we obtain an isomorphism $$\underleftarrow\lim(\mathcal{A}_d^x/\overline{I}, \widetilde{\phi_{xy}})_{0<x<r}\cong\underleftarrow\lim(\mathcal{A}_d^x/\overline{I}, \widetilde{\phi_{xy}})_{x={r}_{n}, \, n\in \mathbb{N}}\cong  \mathcal{F}_d^r/\overline{I}.$$
\end{proof}

Now we will construct a bundle of locally convex algebras with fibers $\mathcal{F}_d/\overline{I}$, where, as in \S 3, the ideal $I$ will depend on the point $x\in X$ ($ X$ is a topological space). Let us formulate the main result of the section.

\begin{theorem} \label{bundle_lca}

Consider a topological space $X$ and a continuous map ${\phi: X\to \mathcal{M}}$.
Then for any $r>0$ there exists a bundle of locally convex algebras $(E,X,\pi,\mathcal{N})$ such that $\pi^{-1}(x)\cong \mathcal{F}_d^r/\overline{I_x} $, where for each $x\in X$ the ideal $I_x\subset \mathcal{P}_d$ is uniquely determined by the condition $\phi(x)=p_{I_x}$.
\end{theorem}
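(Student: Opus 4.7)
The plan is to build the bundle $E$ directly as a disjoint union of the fibers $\mathcal{F}_d^r/\overline{I_x}$, using the Banach bundle from Theorem~\ref{bundle_banach} together with the inverse-limit identification of Theorem~\ref{inverse_limit} to verify continuity of the relevant seminorms on polynomial sections. Concretely, I set $E=\bigsqcup_{x\in X}\mathcal{F}_d^r/\overline{I_x}$ with the obvious projection $\pi:E\to X$, and take $\mathcal{N}=\{\sigma_y:0<y<r\}$, where $\sigma_y$ restricts on each fiber to the quotient seminorm $\|\cdot\|_y$; for each $p\in\mathcal{P}_d$ I define the polynomial section $\gamma_p(x)=p+\overline{I_x}$. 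The family $\mathcal{N}$ is directed since $\|\cdot\|_y$ is monotone in $y$.

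The central computation is continuity of $x\mapsto\sigma_y(\gamma_p(x))$ for each $p\in\mathcal{P}_d$ and $0<y<r$. Writing $p=\sum_\alpha\lambda_\alpha z_\alpha$ and $p^y=\sum_\alpha y^{|\alpha|}\lambda_\alpha z_\alpha$, the isometric map $\theta_y$ from the proof of Theorem~\ref{inverse_limit} gives $\|p\|_y=\|p^y\|_{\mathcal{A}_d}$, and, since $\overline{I_x}$ is graded and $\theta_y$ preserves the grading up to rescaling, the same identification descends to quotients to yield $\sigma_y(\gamma_p(x))=\|p^y+\overline{I_x}\|_{\mathcal{A}_d}$. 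Since $p^y$ is itself in $\mathcal{P}_d$, continuity of the right-hand side in $x$ is exactly the content of Theorem~\ref{bundle_banach} applied to the same map $\phi$.

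I then take $\Gamma$ to be the set of sections $\gamma:X\to E$ with $\pi\circ\gamma=\operatorname{id}_X$ that are locally uniformly approximable by polynomial sections in every seminorm $\sigma_y$. Polynomial sections are closed under pointwise algebra operations, and by density of $\mathcal{P}_d$ in $\mathcal{F}_d^r$ their values are dense in every fiber; combined with the previous paragraph this gives the locally convex analog of axioms B1--B6 of Definition~\ref{definition_banach_2}. I topologize $E$ by the basic open sets $\{e\in E:\pi(e)\in U,\ \sigma_y(e-\gamma(\pi(e)))<\varepsilon\}$ for $U\subset X$ open, $\gamma\in\Gamma$, $\sigma_y\in\mathcal{N}$ and $\varepsilon>0$, and verify axioms A1--A5 of Definition~\ref{definition_bundle} by a routine adaptation of the Banach-bundle construction to a directed family of seminorms.

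The main obstacle I expect is this final passage from continuous field to bundle in the locally convex setting: the Fell--Hofmann-type theorem~\cite[Theorem~13.18]{p50} is stated for a single norm, and its locally convex analog (implicit in~\cite{p53,pirkovskii}) requires careful verification of joint continuity of multiplication with respect to the directed family $\mathcal{N}$. If this direct route becomes unwieldy, the alternative I would pursue is to realise $E$ as a bundle-theoretic inverse limit of the Banach bundles furnished by Theorem~\ref{bundle_banach} at the scales $r_n=r(1-1/n)$, with connecting maps induced fiberwise by $\widetilde{\phi_{r_m,r_n}}$; Theorem~\ref{inverse_limit} then identifies the resulting fibers with $\mathcal{F}_d^r/\overline{I_x}$, and the locally convex bundle structure on $E$ is obtained as a projective limit of Banach bundle structures.
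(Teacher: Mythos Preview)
Your primary approach---topologizing $E=\bigsqcup_x\mathcal{F}_d^r/\overline{I_x}$ directly via a continuous-field construction---is plausible, and your central computation $\sigma_y(\gamma_p(x))=\|p^y+\overline{I_x}\|_{\mathcal{A}_d}$ is correct. But the obstacle you flag is exactly where the paper's argument diverges from yours: there is no off-the-shelf locally convex analogue of \cite[Theorem~13.18]{p50} to cite, and the ``routine adaptation'' would require reproving the local-base axiom A4 and the joint continuity of multiplication from scratch for a directed family of seminorms. That is real work, not routine.

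The paper instead takes your fallback route from the outset. It realizes $E$ as the inverse limit of the system $(E_0^t,\Phi_{ts})_{0<t<r}$ in the category of topological spaces, where each $E_0^t$ is a copy of the single Banach bundle $(E_0,X,\pi_0)$ from Theorem~\ref{bundle_banach} and $\Phi_{ts}$ acts fiberwise as $\widetilde{\phi_{ts}}$. Theorem~\ref{inverse_limit} then identifies each fiber with $\mathcal{F}_d^r/\overline{I_x}$, and the seminorms are $\|e\|_t:=\|\Theta_t(e)\|$. The payoff is that axioms A1--A5 become almost automatic: A5 is immediate, A4 follows by pulling back the basic zero-neighborhoods $T(V,0,\varepsilon)$ in $E_0$ through the projections $\Theta_t$, and A1--A3 hold because addition, multiplication and scalar multiplication on $E$ are the unique maps compatible with those on each $E_0^t$, hence continuous by the universal property of the inverse limit. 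The only nontrivial step is continuity of the connecting maps $\Phi_{ts}$, checked on the basic open sets $T(V,p,\varepsilon)$ of Lemma~\ref{base} via the inclusion $T(V,p^{s/t},\varepsilon)\subset\Phi_{ts}^{-1}T(V,p,\varepsilon)$---essentially your scaling computation in disguise. So your instinct to retreat to the inverse-limit construction is exactly right; the paper simply begins there and thereby sidesteps the Fell--Hofmann issue entirely.
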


To prove the theorem, we need the following lemma.
\begin{lemma} \label{base}
Consider a bundle of Banach algebras $(E,X,\pi)$ and a set of sections $S\subset \Gamma(X,E)$ such that for any $x\in X$ the set $\{s( x), \,s\in S\}$ is dense in $\pi^{-1}(x)$.
Then sets of the form $$W(V,s,\varepsilon):=\{e\in E: \, \pi_0(e)\in V, \, \|e-s(\pi(e))\|<\varepsilon \},$$ where $V\subset X$ is an open set, $s\in S$, $\varepsilon>0$, form the base of topology of the space $E$.
\end{lemma}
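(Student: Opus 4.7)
The plan is to prove two things: (i) every set $W(V,s,\varepsilon)$ is open in $E$, and (ii) for every open $U\subset E$ and every $e_0\in U$ there is a set $W(V,s,\varepsilon)$ of this form with $e_0\in W(V,s,\varepsilon)\subset U$. Together these say exactly that the family $\{W(V,s,\varepsilon)\}$ is a base.

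For (i), I would verify that the function $N_s\colon E\to\mathbb{R}$, $e\mapsto\|e-s(\pi(e))\|$, is continuous. Since $\pi$ and the section $s$ are continuous, $e\mapsto(e,s(\pi(e)))$ is continuous into the pullback $E\times_X E$; axiom A1 then gives continuity of subtraction, and A5 gives continuity of the norm. Hence $W(V,s,\varepsilon)=\pi^{-1}(V)\cap N_s^{-1}([0,\varepsilon))$ is open as an intersection of two open sets.

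For (ii), the strategy is to first trap $e_0$ inside a tube around some auxiliary continuous section, and then swap that section for an element of $S$ via density and the triangle inequality. Fix $e_0\in U$ and set $x_0=\pi(e_0)$. By the classical description of the topology of a Banach bundle (cf.\ \cite[Theorem~13.18]{p50}), tubes $W(V',\gamma,\delta)$ around arbitrary continuous sections $\gamma\in\Gamma(X,E)$ form a base, so one can choose such a $\gamma$ together with an open $V_0\ni x_0$ and $\varepsilon_0>0$ satisfying $e_0\in W(V_0,\gamma,\varepsilon_0)\subset U$. Put $\eta=\varepsilon_0-\|e_0-\gamma(x_0)\|>0$. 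By the density hypothesis on $S$, pick $s\in S$ with $\|s(x_0)-\gamma(x_0)\|<\eta/4$; and because $x\mapsto\|s(x)-\gamma(x)\|$ is continuous (by the same argument as in (i)), there is an open neighborhood $V\subset V_0$ of $x_0$ on which $\|s(x)-\gamma(x)\|<\eta/2$. Setting $\varepsilon=\varepsilon_0-\eta/2$, two triangle inequalities give
\[
\|e_0-s(x_0)\|\leq\|e_0-\gamma(x_0)\|+\tfrac{\eta}{4}<\varepsilon_0-\tfrac{3\eta}{4}<\varepsilon,
\]
so $e_0\in W(V,s,\varepsilon)$, and for any $e\in W(V,s,\varepsilon)$,
\[
\|e-\gamma(\pi(e))\|\leq\|e-s(\pi(e))\|+\|s(\pi(e))-\gamma(\pi(e))\|<\varepsilon+\tfrac{\eta}{2}=\varepsilon_0,
\]
whence $W(V,s,\varepsilon)\subset W(V_0,\gamma,\varepsilon_0)\subset U$.

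The main obstacle is the appeal to the classical fact that tubes around \emph{all} continuous sections of a Banach bundle form a base of its topology: this is not listed among the axioms in Definition~\ref{definition_bundle}, but is a standard consequence of A1--A5 together with the availability of enough continuous sections through each point. Once this is granted, the remainder of (ii) is merely a density-plus-triangle-inequality manipulation, and (i) is essentially a book-keeping application of A1 and A5.
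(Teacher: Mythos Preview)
Your proposal is correct and aligns with the paper's approach: the paper itself omits the proof entirely, stating only that it ``is entirely contained in the proof of Theorem~13.18 in \cite{p50}'' (see also \cite[Theorem~C.25]{p177}), which is precisely the reference you invoke for the key step. Your write-up is thus a faithful unpacking of what the paper leaves implicit, and you correctly flag the one nontrivial input---that tubes around arbitrary continuous sections form a base, which requires enough sections through each point---as coming from the cited source rather than from the axioms alone.
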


We omit the proof, since it is entirely contained in the proof of Theorem 13.18 in \cite{p50} (see also \cite[Theorem C.25]{p177}).

By Theorem \ref{bundle_banach}, there exists a bundle of Banach algebras $(E_0,X,\pi_0)$ such that ${\pi_0^{-1}(x)\cong \mathcal{A}_d/\overline{ I_x}}$, where for each $x\in X$ the ideal $I_x\subset \mathcal{P}_d$ is uniquely determined by the condition $\phi(x)=p_{I_x}$.
Moreover, from the density of the subalgebra $\mathcal{P}_d$ in $\mathcal{A}_d$ it follows that the set of elements $\pi_d^{\phi}(x)(p)$, where $p\in \mathcal{P}_d$, is dense in each fiber $\pi_0^{-1}(x)$.
Note also that all sections of the form $x\mapsto \pi_d^\phi(x)(p)$ are continuous by the construction of the bundle $(E_0,X,\pi_0)$ (see the proof of the  Theorem \ref{bundle_banach}).
We define sets $$T(V,p,\varepsilon):=\{e\in E_0: \, \pi_0(e)\in V, \, \|e-\pi_d^{\phi}(\pi_0( e))(p)\|<\varepsilon \}$$ for all open $V\subset X$, $\varepsilon>0$, $p\in \mathcal{P}_d$.
It follows from  Lemma \ref{base} that the sets $T(V,p,\varepsilon)$ form the base of  topology of the space $E_0$.

\begin{proof}[Proof of Theorem \ref{bundle_lca}]
For every $0<t<r$ we put $E_0^t=E_0$. For any $0< t<s<r$ and any $x\in X$ the map $\phi_{ts}: \mathcal{A}_d^s\to \mathcal{A}_d^t$ induces the map $\phi ^x_{ts}: \mathcal{A}_d^s/\overline{I_x}\to \mathcal{A}_d^t/\overline{I_x}$, where the ideal $I_x\subset \mathcal{P}_d $ is uniquely defined by the condition $\phi(x)=p_{I_x}$.
Define the map $\Phi_{ts}: E_0^s\to E_0^t$, $e\mapsto \phi^x_{ts}(e)$, where $x=\pi_0(e)$. Let us prove that $\Phi_{ts}$ is continuous. To do this, check that $T(V,p^{\frac{s}{t}},\varepsilon)\subset \Phi_{ts}^{-1}T(V,p,\varepsilon)$. Indeed, if $e\in T(V,p^{\frac{s}{t}},\varepsilon)$, then $x=\pi_0(e)\in V$. Moreover, $\|e-\pi_d^{\phi}(x)(p^{\frac{s}{t}})\|<\varepsilon$, and since the homomorphism $\phi_{ts}^ x$ is contractive, the following estimates hold: $$\|\Phi_{ts}(e)-\pi_d^{\phi}(x)(p)\|=\|\phi^x_{ts}(e-\pi_d^{\phi}(x)(p^{\frac{s}{t}}))\|\leq \|e-\pi_d^{\phi}(x)(p^{\frac{s}{t}})\|<\varepsilon.$$ Therefore, $\Phi_{ts}(e)\in T(V,p,\varepsilon)$. Thus the map $\Phi_{ts}$ is continuous.

We define the space $E$ as the inverse limit of the system $(E_0^t, \Phi_{ts})_{0< t<r}$ in the category of topological spaces. Denote by $\Theta_t: E\to E_0^t$ the canonical maps. We define a map $\pi: E\to X$, $\pi=\pi_0\circ \Theta_{\frac{r}{2}}$.
Note that the set $E$ is the set of sequences $(e^t)_{0< t<r}$ such that $\Phi_{ts}(e^s)=e^t$ for any $0< t<s<r$. In particular, $\pi_0(e^s)=\pi_0(e^t)=x$ for all $0< t<s<r$. But then the set $E$ can be viewed as the set of sequences $(v^t,x)_{0< t<r}$ such that $x\in X$, $v^t\in \mathcal{A }_d/\overline{I_x}$ and $\phi_{ts}^x(v^s)=v^t$ for any $0< t<s<r$. Therefore, the set $\pi^{-1}(x)$ is the inverse limit of the system $(\mathcal{A}_d^t/\overline{I_x}, \phi_{ts}^x)_{0< t<r}$ isomorphic to $ \mathcal{F}_d^r/\overline{I_x}$ by the  Theorem \ref{inverse_limit}. Using this isomorphism, we define an algebraic structure on $\pi^{-1}(x)$ (at this moment we do not claim that it is compatible with the topology).

Denote by $\alpha_t^x: \pi^{-1}(x)\cong \mathcal{F}_d^r/\overline{I_x}\to \mathcal{A}_d^t/\overline{I_x}$ the canonical projections. We introduce a system of seminorms on $E$. Consider $\|e\|_t:=\|\Theta_t(e)\|$.
It is clear that if $t<s$, then $$\|e\|_t=\|\Theta_t(e)\|=\|\Phi_{ts}\circ \Theta_s(e)\|=\|\phi_{ts}^{\pi(e)}\circ \alpha_s^{\pi(e)}(e)\|\leq \|\alpha_s(e)\|=\|\Theta_s(e)\| =\|e\|_s,$$ therefore it is a directed system. Moreover, it is obvious that these seminorms are continuous.

In order to prove that $E$, $X$, $\pi$ and this system of seminorms form a bundle, it remains for us to verify the validity of axioms A1-A4.

Let us start with axiom A4.
The family of sets $T(V,p,\varepsilon)$ forms the basis of topology of each space $E_0^t$. Then the family of sets $\Theta_t^{-1}T(V,p,\varepsilon)$ forms the base of  topology of the space $E$ (see \cite[Proposition 2.5.5]{gen_top}).
Let $x\in X$.
Since axiom A4 holds for the bundle $(E^t_0,X,\pi_0)$, the family of sets $T(V,0,\varepsilon)$, where $V\ni x$ and $\varepsilon>0$, forms a local base at the point $\Theta_t(0_x)=0_x\in E_0$. Therefore, the family of sets
$$\Theta_t^{-1}T(V,0,\varepsilon)=\{e\in E:\, \pi(e)\in V, \,\|\Theta_t(e)\|<\varepsilon \}=\{e\in E:\, \pi(e)\in V, \,\|e\|_t<\varepsilon \}$$ forms a local base at $0_x\in E$. Thus axiom A4 is valid.

The validity of the axioms A1,A2,A3 is easily deduced from the general properties of the limits of bundles. Indeed, we know that $E\times_{X} E=\underleftarrow\lim(E_0^t\times_{X} E_0^t, \Phi_{ts}\times_{X}\Phi_{ts}) _{0\leq t<r}$, since both the fiber product and the inverse limit are limits of the corresponding diagrams.
Let ${Add: E\times_{X} E\to E}$ and ${Add_0: E_0\times_{X} E_0\to E_0}$ be addition, and ${Mult: E\times_{X} E \to E}$ and ${Mult_0: E_0\times_{X} E_0\to E_0}$ be multiplication. Then the composition of the maps ${\Theta_t\circ Add}={Add_0\circ (\Theta_t\times_X \Theta_t)}$ is continuous for any $t$, and hence $Add$ is also continuous. It is proved similarly that the map $Mult$ and the map $Cn: E\times \mathbb{C}\to E$ of multiplication by a constant are continuous. Thus the triple $(E,X,\pi)$, together with the family of seminorms constructed above, satisfies axioms A1-A5 and therefore forms a bundle of locally convex algebras.
\end{proof}

\section{Universality of $\mathcal{M}$}

In this part of the paper, we prove that the topological space $\mathcal{M}$ is a classifying space for bundles of some type.

\begin{lemma} \label{graded_norm}
Let $I=\bigoplus\limits_{m\geq 1}I^m\subset \mathcal{P}_d$ be a graded ideal, $p\in \mathcal{P}_d^m$. Then $\|p\|_{\mathcal{A}_d/\overline{I}}=\|p\|_{\mathcal{P}_d^m/I^m}$.
\end{lemma}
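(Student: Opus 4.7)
The plan is to prove both inequalities separately. The inequality $\|p\|_{\mathcal{A}_d/\overline{I}} \leq \|p\|_{\mathcal{P}_d^m/I^m}$ is immediate from $I^m \subset I \subset \overline{I}$: the norm on $\mathcal{P}_d^m$ is inherited from $\mathcal{A}_d$, and taking the quotient infimum over a smaller set $I^m$ can only yield a larger value than taking it over $\overline{I}$.

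For the reverse inequality, I will construct a norm-one linear projection $F: \mathcal{A}_d \to \mathcal{P}_d^m$ that restricts to the identity on $\mathcal{P}_d^m$ and satisfies $F(\overline{I}) \subset I^m$. Granted this, for each $f \in \overline{I}$ we get
$$\|p+f\|_{\mathcal{A}_d} \geq \|F(p+f)\|_{\mathcal{A}_d} = \|p+F(f)\|_{\mathcal{A}_d} \geq \|p\|_{\mathcal{P}_d^m/I^m},$$
and passing to the infimum yields $\|p\|_{\mathcal{A}_d/\overline{I}} \geq \|p\|_{\mathcal{P}_d^m/I^m}$.

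To build $F$, I would use the circle (gauge) action. Let $U_\theta \in \mathcal{B}(\mathcal{F}(\mathbb{C}^d))$ be the unitary that acts as $e^{ik\theta}$ on $(\mathbb{C}^d)^{\otimes k}$, and put $\sigma_\theta(T) = U_\theta T U_\theta^*$. Then $\sigma_\theta$ is an isometric automorphism of $\mathcal{B}(\mathcal{F}(\mathbb{C}^d))$ with $\sigma_\theta(s_i) = e^{i\theta}s_i$, so it preserves $\mathcal{A}_d$. Define
$$F(f) := \frac{1}{2\pi}\int_0^{2\pi} e^{-im\theta}\sigma_\theta(f)\, d\theta$$
as a Bochner integral in $\mathcal{A}_d$; the integrand is continuous in $\theta$, which is checked first for noncommutative polynomials and then extended to $f \in \mathcal{A}_d$ by an $\varepsilon/3$-argument using the isometry of $\sigma_\theta$. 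Norm-contractivity $\|F(f)\| \leq \|f\|$ is immediate, and a direct computation on monomials shows that $F$ is the identity on $\mathcal{P}_d^m$ and vanishes on $\mathcal{P}_d^k$ for $k \neq m$.

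Two inclusions finish the argument. First, $F(\mathcal{A}_d) \subset \mathcal{P}_d^m$: approximating $f \in \mathcal{A}_d$ by polynomials $p_n$ gives $F(p_n) = (p_n)_m \in \mathcal{P}_d^m$ and $F(p_n) \to F(f)$, and $\mathcal{P}_d^m$ is finite-dimensional, hence closed. Second, $F(\overline{I}) \subset I^m$: gradedness of $I$ gives $\sigma_\theta(I) = I$, hence $\sigma_\theta(\overline{I}) = \overline{I}$ and $F(\overline{I}) \subset \overline{I}$; combining with the first inclusion, $F(\overline{I}) \subset \overline{I}\cap \mathcal{P}_d^m$. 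An analogous approximation argument shows $\overline{I}\cap\mathcal{P}_d^m = I^m$: for $q$ in the intersection, writing $q = \lim p_n$ with $p_n \in I$ and applying $F$ yields $q = F(q) = \lim (p_n)_m$ with $(p_n)_m \in I \cap \mathcal{P}_d^m = I^m$, and $I^m$ is finite-dimensional, hence closed. The only technical point is justifying the Bochner integral, but this is standard and not a serious obstacle; the substance of the proof is the observation that gradedness of $I$ translates into invariance of $\overline{I}$ under the gauge action.
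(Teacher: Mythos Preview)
Your proof is correct and takes a genuinely different route from the paper's. The paper argues the hard inequality by contradiction: assuming $\|p\|_{\mathcal{A}_d/\overline{I}}<c<\|p\|_{\mathcal{P}_d^m/I^m}$, it picks (after a density approximation) $q\in I^m$ and $q'\in\bigoplus_{k\neq m}I^k$ with $\|p+q+q'\|<c$, then finds a homogeneous Fock-space vector $v_k$ on which $\|(p+q)(v_k)\|>c\|v_k\|$, and uses orthogonality of $(p+q)(v_k)$ and $q'(v_k)$ (they lie in different graded pieces of $\mathcal{F}(\mathbb{C}^d)$) to force $\|(p+q+q')(v_k)\|>c\|v_k\|$, a contradiction. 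Your argument instead builds the degree-$m$ Fourier projection $F$ via the gauge action $\sigma_\theta=\operatorname{Ad}(U_\theta)$ and exploits that gradedness of $I$ is exactly $\sigma_\theta$-invariance, so $F$ is a contractive projection sending $\overline{I}$ into $I^m$. The paper's proof is more elementary---no Bochner integrals, just Pythagoras on the Fock space---while yours is more structural and would transplant verbatim to any operator algebra carrying a continuous $\mathbb{T}$-grading, without reference to the concrete Fock realization. Both hinge on the same underlying fact (the grading separates degree $m$ from the rest contractively); yours packages it once in the projection $F$, the paper unpacks it at the level of a single test vector.
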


\begin{proof}
It is obvious that $I^m\subset \overline{I}$, and hence $\|p\|_{\mathcal{A}_d/\overline{I}}\leq \|p\|_{\mathcal{P}_d^m/I^m}$. Consider ${\|p\|_{\mathcal{A}_d/\overline{I}}<c< \|p\|_{\mathcal{P}_d^m/I^m}}$. Then there exist $q\in I^m$, $q'\in \bigoplus\limits_{k\neq m}I^k$ such that $\|p+q+q'\|_{\mathcal{A}_d}<c$. On the other hand, ${\|p+q\|_{\mathcal{A}_d}> c}$. Therefore, there exists $v\in \mathcal{F}(\mathbb{C}^d)$ such that $\|(p+q)(v)\|>c\|v\|$, and so ${\|(p+q)(v)\|^2>c^2\|v\|^2}$. We can assume without loss of generality that $$v=v_1+\ldots+v_n, \,\, {v_k\in (\mathbb{C}^d)^{\otimes k}\subset \mathcal{F}(\mathbb {C}^d)}.$$ Then $\|v\|^2=\|v_1\|^2+\ldots+\|v_n\|^2$ and $\|(p+q)(v )\|^2=\|(p+q)(v_1)\|^2+\ldots+\|(p+q)(v_n)\|^2$. Hence $\|(p+q)(v_k)\|^2>c^2\|v_k\|^2$ for some $k$. Note that $$\|(p+q+q')(v_k)\|^2=\|(p+q)(v_k)\|^2+\|q'(v_k)\|^2\geq\|(p+q)(v_k)\|^2>c^2\|v_k\|^2.$$ Hence $\|(p+q+q')(v_k)\|>c\ |v_k\|$ and $\|p+q+q'\|_{\mathcal{A}_d}>c$. This leads to a contradiction.
\end{proof}

Consider a topological space  $X$ and a map $\phi: X\to \mathcal{M}$, which is not assumed to be continuous.
As above, for each $x\in X$ we denote by $I_x$ the graded ideal in $\mathcal{P}_d$ uniquely determined by the condition $\phi(x)=p_{I_x}$.
Suppose that for each $p\in\mathcal{P}_d$ the map $X\to \mathbb{R}_{\geq 0}$, $x\mapsto \|p+\overline{I_x}\|_{ \mathcal{A}_d/\overline{I_x}}$ is continuous.

For a natural number $m$ and $x\in X$ consider the linear map $${f_x:\mathcal{P}_d^m\to \pi^{-1}(x)\cong \mathcal{A}_d/\overline{I_x}},\,\,\,{s_{i_1}\ldots s_{i_m}\mapsto s_{i_1}\ldots s_{i_m}+\overline{I_x}}.$$ We will denote by $|p|_x:=\|f_x(p)\|$ the seminorm on $\mathcal{P}_d^m$ induced by $f_x$.

\begin{lemma}\label{rank}
Let $x_0\in X$. Then there exists an open neighborhood $U\ni x_0$ such that ${\dim \operatorname{Ker} (f_{x})\leq \dim \operatorname{Ker} (f_{x_0 })}$ for each $x\in U$.
\end{lemma}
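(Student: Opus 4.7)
The plan is to recognize that $\ker(f_x)$ is exactly the degree-$m$ component $I_x^m$, and then to show that the rank of $f_x$ is lower semicontinuous at $x_0$, which is equivalent to the statement of the lemma. Concretely, a polynomial $p\in \mathcal{P}_d^m$ lies in $\ker(f_x)$ iff $\|p+\overline{I_x}\|_{\mathcal{A}_d/\overline{I_x}}=0$, and by Lemma~\ref{graded_norm} this coincides with $\|p\|_{\mathcal{P}_d^m/I_x^m}=0$; since $I_x^m$ is a finite-dimensional (hence closed) subspace of $\mathcal{P}_d^m$, this means $p\in I_x^m$. Thus $\ker(f_x)=I_x^m$, and it suffices to find $U\ni x_0$ on which $\operatorname{rank}(f_x)\geq \operatorname{rank}(f_{x_0})$.

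Set $r=\operatorname{rank}(f_{x_0})=\dim\mathcal{P}_d^m-\dim\ker(f_{x_0})$. Choose a linear subspace $V\subset \mathcal{P}_d^m$ of dimension $r$ that is complementary to $\ker(f_{x_0})$, so that $f_{x_0}|_V$ is injective. If I can show $f_x|_V$ remains injective for all $x$ in some neighborhood of $x_0$, then $\operatorname{rank}(f_x)\geq r$, and the lemma follows. Equip $V$ with the restriction of the operator norm $\|\cdot\|_{\mathcal{A}_d}$ and let $S\subset V$ be its unit sphere, which is compact because $V$ is finite-dimensional. By injectivity of $f_{x_0}|_V$ and compactness of $S$, there is a $\delta>0$ with $\|p+\overline{I_{x_0}}\|_{\mathcal{A}_d/\overline{I_{x_0}}}\geq \delta$ for all $p\in S$.

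The key uniformity observation is that for every $x\in X$ the quotient seminorm satisfies
\[
\bigl|\,\|p+\overline{I_x}\|_{\mathcal{A}_d/\overline{I_x}}-\|q+\overline{I_x}\|_{\mathcal{A}_d/\overline{I_x}}\bigr|\leq \|p-q\|_{\mathcal{A}_d},
\]
so the function $p\mapsto \|p+\overline{I_x}\|$ is $1$-Lipschitz on $V$, \emph{uniformly in $x$}. Now cover $S$ by finitely many balls of radius $\delta/4$ centered at $p_1,\dots,p_k\in S$. By the standing continuity hypothesis on $x\mapsto \|p_j+\overline{I_x}\|$, for each $j$ there is a neighborhood $U_j\ni x_0$ on which $\|p_j+\overline{I_x}\|>\delta/2$. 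Set $U=\bigcap_{j=1}^k U_j$. For any $x\in U$ and any $p\in S$, choose $j$ with $\|p-p_j\|_{\mathcal{A}_d}<\delta/4$; the uniform Lipschitz estimate then gives
\[
\|p+\overline{I_x}\|\geq \|p_j+\overline{I_x}\|-\|p-p_j\|_{\mathcal{A}_d}>\delta/2-\delta/4=\delta/4>0.
\]
Hence $f_x$ is injective on $V$ for every $x\in U$, and consequently $\dim\ker(f_x)\leq \dim\mathcal{P}_d^m-\dim V=\dim\ker(f_{x_0})$, as required.

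The main obstacle is the passage from pointwise continuity of the norms to uniformity over the (infinite) unit sphere of $V$; the resolution is the combination of the fiberwise $1$-Lipschitz property (with a constant \emph{independent of $x$}) and compactness of $S$, which reduces everything to a finite intersection of neighborhoods.
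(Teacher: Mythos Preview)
Your proof is correct and follows the same overall strategy as the paper: choose a subspace complementary to $\ker(f_{x_0})$, cover its compact unit sphere by a finite net, and use the pointwise continuity hypothesis at the net points to deduce injectivity of $f_x$ on that subspace for all $x$ in a neighborhood. Your execution is in fact a bit cleaner than the paper's: where the paper controls $|e_i-w|_x$ by expanding $w$ in basis coordinates and separately arranging $|p_i|_x<2$, you observe directly that the quotient seminorm is $1$-Lipschitz in $p$ with a constant independent of $x$, which eliminates that bookkeeping.
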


\begin{proof}

We choose $p_1,\ldots, p_n\in \mathcal{P}_d^m$ such that $f_{x_0}(p_1),\ldots,f_{x_0}(p_n)$ form a basis of $\operatorname{Im} (f_{x_0})$ and $\|p_i\|_{x_0}=1$ for all $i=1,\ldots,n$. Take $E=\operatorname{span}(p_1,\ldots, p_n)$. All norms on $E$ are equivalent, hence there exists $c>0$ such that for any $a_1,\ldots,a_n\in\mathbb{C}$ $$\|a_1p_1+\ldots+a_np_n\|\geq c (|a_1|+\ldots+|a_n|).$$
Note that $|\cdot |_{x_0}$ is also a norm on $E$ equivalent to $\|\cdot \|$. Denote by ${S=\{e\in E: |e|_{x_0}=1\}}$ the unit sphere with respect to $|\cdot |_{x_0}$. Note that $S$ is compact, which means that we can choose a finite number of vectors $e_1,\ldots,e_l\in S$ such that for any $w\in E$ such that $|w|_{x_0}=1 $ there exists ${i\in\{1,\ldots,l\}}$ such that $\|w-e_i\|<\frac{c}{6}$.

For each $i=1,\ldots,n$ there exists an open neighborhood $U_i\ni x_0$ such that $|p_i|_{x}<2$ for any $x\in U_i$.
For every $j=1,\ldots,l$ there is an open neighborhood $V_j\ni x_0$ such that $|e_j|_{x}>\frac{1}{2 }$.

Consider $U=U_1\cap\ldots\cap U_n\cap V_1\cap\ldots\cap V_l$. Take $x\in U$, $w\in E$, $\|w\|=1$. We know that there exists $i\in\{1,\ldots,l\}$ such that $\|w-e_i\|<\frac{c}{6}$. Let $w=a_1p_1+\ldots+a_np_n$, $e_i=b_1p_1+\ldots+b_np_n$. Then $$|w|_{x}\geq |e_i|_{x}-|e_i-w|_{x}\geq \frac{1}{2}-|e_i-w|_{x}. $$ In this case $$|e_i-w|_{x}\leq |a_1-b_1||p_1|_{x}+\ldots+|a_n-b_n||p_n|_{x}<2(|a_1- b_1|+\ldots+|a_n-b_n|).$$
Also $$2(|a_1-b_1|+\ldots+|a_n-b_n|)\leq \frac{2}{c}\|(a_1-b_1)p_1+\ldots+(a_n-b_n)p_n\|=\frac{ 2}{c}\|w-e_i\|<\frac{1}{3}.$$
Therefore, $$|w|_{x}\geq \frac{1}{2}-|e_i-w|_{x}\geq \frac{1}{6}>0.$$
Hence, for all points $x\in U$ and vectors $0\neq w\in E$ it is true that $|w|_{x}=\|w\|\cdot |\frac{w}{\|w \|}|_x\neq 0$, and hence the vectors $f_x(p_1),\ldots,f_x(p_n)$ are linearly independent in $\mathcal{A}_d/\overline{I_{x}}$.
Then $$\dim \operatorname{Ker} (f_{x})=d^m-\dim \operatorname{Im} (f_{x})\leq d^m-n=d^m-\dim \operatorname{Im } (f_{x_0})=\dim \operatorname{Ker} (f_{x_0}).$$
\end{proof}

We denote by $Gr(k,n)$ the complex Grassmannian: the variety of $k$-dimensional subspaces in $\mathbb{C}^{n}$. The Plücker embedding $Pl: Gr(k,n)\to \mathbb{P}(\Lambda^k(\mathbb{C}^n))$ is defined as follows: if a point $x\in Gr(k,n) $ corresponds to the subspace $V\subset \mathbb{C}^n$, the vectors $e_1,\ldots, e_k$ form the basis of the subspace $V$, then $Pl(x)=\mathbb{P}(e_1\wedge\ldots \wedge e_k)$ (see \cite[\S 10]{shafarevich}). It is known that $Pl$ is a smooth embedding of manifolds.

\begin{lemma} \label{Gr}
Let $x_0\in X$, $\dim \operatorname{Ker} (f_{x_0})=k$. Then there exists an open neighborhood $U\ni x_0$ such that $\dim \operatorname{Ker} (f_{x})=k$ for any $x\in U$. In addition, the map $\psi^m: U\to Gr(k,d^m)$, $x\mapsto \operatorname{Ker} (f_{x})$, where $\operatorname{Ker} (f_{ x})$ is considered as a subspace of $\mathcal{P}_d^m$, is continuous.
\end{lemma}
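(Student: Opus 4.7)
The plan is to reduce the problem to a finite-dimensional statement on $(\mathbb{C}^d)^{\otimes m}$ and exploit the fact that $\phi(x)$ restricts to an \emph{orthogonal} projection on each graded component. The starting observation is that, under the vector-space isomorphism $Ev\colon \mathcal{P}_d^m \to (\mathbb{C}^d)^{\otimes m}$, the kernel $\operatorname{Ker}(f_x)$ coincides with $I_x^m$, which is sent to $Ev(I_x^m) = \operatorname{Ker}(\phi_m(x))$, where $\phi_m(x) := \phi(x)|_{(\mathbb{C}^d)^{\otimes m}}$. As was already noted in the proof of Theorem~\ref{bundle_banach}, the map $\phi_m \colon X \to \mathcal{B}((\mathbb{C}^d)^{\otimes m})$ is continuous in the operator norm, because the codomain is finite-dimensional.

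For the dimension statement I would invoke the trace identity $\operatorname{rank}(P) = \operatorname{tr}(P)$ valid for orthogonal projections on a finite-dimensional Hilbert space. The function $x \mapsto \operatorname{tr}(\phi_m(x))$ is continuous, since $\operatorname{tr}$ is a continuous linear functional on $\mathcal{B}((\mathbb{C}^d)^{\otimes m})$, and it is integer-valued, hence locally constant. Therefore $\dim \operatorname{Ker}(\phi_m(x)) = d^m - \operatorname{rank}(\phi_m(x))$ is locally constant around $x_0$, and any neighborhood $U$ on which this dimension equals $k$ witnesses the first claim.

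To establish continuity of $\psi^m$ I would build a continuous local frame for the kernel. Fix a basis $w_1, \ldots, w_k$ of $\operatorname{Ker}(\phi_m(x_0))$ and set $w_i(x) := (\operatorname{Id} - \phi_m(x))(w_i)$. Each $w_i(\cdot)$ is norm-continuous, satisfies $w_i(x_0) = w_i$, and lies in $\operatorname{Ker}(\phi_m(x))$ by construction. Linear independence is an open condition: the Gram determinant $\det[\langle w_i(x), w_j(x)\rangle]$ is continuous in $x$ and nonzero at $x_0$, so after shrinking $U$ the vectors $w_1(x), \ldots, w_k(x)$ remain linearly independent and, by the dimension count just established, form a basis of $\operatorname{Ker}(\phi_m(x))$. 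Then $x \mapsto w_1(x) \wedge \ldots \wedge w_k(x)$ is a continuous nonvanishing map into $\Lambda^k((\mathbb{C}^d)^{\otimes m})$, and its projectivization coincides with $Pl(\psi^m(x))$; since $Pl$ is a topological embedding, continuity of $\psi^m$ on $U$ follows.

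The main obstacle is the first part. Lemma~\ref{rank} supplies only upper semicontinuity of $\dim\operatorname{Ker}(f_x)$, and for a generic norm-continuous family of linear maps the kernel dimension can genuinely jump. It is precisely the orthogonality of $\phi(x)$ — which lets the rank be read off the continuous, integer-valued trace — that upgrades upper semicontinuity to the local constancy needed here; once that is in hand, the Grassmannian continuity is a routine frame construction.
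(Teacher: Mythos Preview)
Your argument has a genuine gap: you assume that $\phi_m\colon X\to\mathcal{B}((\mathbb{C}^d)^{\otimes m})$ is continuous, but in the setting of Lemma~\ref{Gr} this is \emph{not} a hypothesis. The lemma lives in \S5, where the standing assumption is only that the scalar functions $x\mapsto\|p+\overline{I_x}\|_{\mathcal{A}_d/\overline{I_x}}$ are continuous for each $p\in\mathcal{P}_d$; the map $\phi$ itself is explicitly \emph{not} assumed continuous. Your appeal to the proof of Theorem~\ref{bundle_banach} is misplaced, since that theorem has the continuity of $\phi$ built into its hypotheses. In fact the whole purpose of \S5 is to deduce continuity of $\phi$ (Theorem~\ref{banach_necessary}) from the norm hypothesis, and Lemma~\ref{Gr} is the key step on the way to showing each $\phi^m$ is continuous via Lemma~\ref{one_to_all}. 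So invoking continuity of $\phi_m$ here is circular: both your trace argument for local constancy of the rank and your frame construction $w_i(x)=(\mathrm{Id}-\phi_m(x))(w_i)$ collapse without it.

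The paper's proof works directly from the available hypothesis. Lemma~\ref{rank} gives a neighborhood $U_0$ on which $\dim\operatorname{Ker}(f_x)\le k$. For the reverse inequality one fixes a basis $p_1,\ldots,p_k$ of $\operatorname{Ker}(f_{x_0})$ and uses continuity of the seminorms $x\mapsto|p_i|_x$ (which \emph{is} part of the hypothesis) together with Lemma~\ref{graded_norm} to produce, for each $x$ near $x_0$, elements $q_i\in I_x^m=\operatorname{Ker}(f_x)$ with $\|q_i-p_i\|<\varepsilon$. For small $\varepsilon$ the $q_i$ are linearly independent (via the wedge map $Wd$), forcing $\dim\operatorname{Ker}(f_x)\ge k$ and simultaneously placing $\operatorname{Ker}(f_x)$ in any prescribed Pl\"ucker neighborhood of $\operatorname{Ker}(f_{x_0})$. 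The point is that the approximating frame $(q_1,\ldots,q_k)$ is obtained from the quotient-norm data, not from $\phi_m$.
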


\begin{proof}
Lemma \ref{rank} implies that there exists an open neighborhood $U_0\ni x_0$ such that for any $x\in U_0$ the inequality $\dim \operatorname{Ker} (f_{x})\leq k$ holds.

Take the vectors $p_1,\ldots, p_k\in \operatorname{Ker} (f_{x_0})\subset \mathcal{P}_d^m$, which form the basis of $\operatorname{Ker} (f_{x_0})$.
Consider $\operatorname{Ker} (f_{x_0})$ as a subspace of $\mathcal{P}_d^m$. Consider the corresponding point ${\operatorname{Ker} (f_{x_0})\in Gr(k,d^m)}$. Consider its arbitrary neighborhood $W\subset Gr(k,d^m)$. We show that there exists an open neighborhood $U\ni x_0$ such that for all $x\in U$ the conditions ${\dim \operatorname{Ker} (f_x)= k}$ and $\operatorname{Ker} (f_ {x})\in W$ hold.

Since the Plücker embedding $Pl: Gr(k,d^m)\to \mathbb{P}(\Lambda^k(\mathcal{P}_d^m))$ is a topological embedding, there exists an open neighborhood $W_1\subset \mathbb{P}(\Lambda^k(\mathcal{P}_d^m))$ such that ${W_1\cap Pl(Gr(k,d^m))\subset Pl(W)}$ and ${Pl(\operatorname{Ker} (f_{x_0}))\in W_1}$. Note that there exists an open neighborhood
$W_2\subset \Lambda^k(\mathcal{P}_d^m)\backslash \{0\}$ of $p_1\wedge\ldots\wedge p_k$ such that ${\mathbb{P}((\mathbb{C}\backslash \{0\})\cdot W_2)\subset W_1}$.

Consider the vector space $(\mathcal{P}_d^m)^k$ as a direct sum of the spaces $\mathcal{P}_d^m$ and equip it with the norm given by the equality $\|(r_1,\ldots,r_k)\ |=\max\limits_{i=1,\ldots,k}\|r_i\|$. The map ${Wd: (\mathcal{P}_d^m)^k\to \Lambda^k(\mathcal{P}_d^m)}$ defined by the condition $(r_1,\ldots,r_k)\mapsto r_1 \wedge \ldots\wedge r_k$ is linear in each argument, and hence continuous. Therefore, there exists $\varepsilon>0$ such that $Wd(r_1,\ldots,r_k)\in W_2$ for any $(r_1,\ldots,r_k)$ such that $\|(r_1,\ldots, r_k)-(p_1,\ldots,p_k)\|<\varepsilon$. In particular, $r_1\wedge \ldots\wedge r_k=Wd(r_1,\ldots,r_k)\neq 0,$ and hence $\dim (\operatorname{span}(r_1,\ldots,r_k)) =k$.

For each $i=1,\ldots,k$ consider an open neighborhood $U_i\ni x_0$ such that $|p_i|_{\pi^{-1}(x)}<\varepsilon$ for each $x\in U_i$. Take $U=U_0\cap U_1\cap \ldots\cap U_k$. Lemma \ref{graded_norm} implies that for each $x\in U$ there exists $q_i\in \mathcal{P}_d^m$ such that $q_i\in I_x^m$ and ${\|q_i- p_i\|<\varepsilon}$. Hence $\|(q_1,\ldots,q_k)-(p_1,\ldots,p_k)\|<\varepsilon$.
Then in particular, $\dim (\operatorname{span}(q_1,\ldots,q_k))=k.$ Note that $q_1,\ldots,q_k\in \operatorname{Ker} (f_{x})$. The fact that $x\in U_0$ implies that $\dim \operatorname{Ker} (f_{x})\leq k$. Therefore, $\dim \operatorname{Ker} (f_{x})=k$. On the other hand, ${\operatorname{Ker} (f_{x})=\operatorname{span}(q_1,\ldots,q_k)\in W}$.
\end{proof}

\begin{lemma}\label{one_to_all}
Suppose that for each $m$ the map $\phi^m: X\to \mathcal{B}((\mathbb{C}^d)^{\otimes m})$ given by the condition $x\mapsto p^m_{ I_x}$ is continuous. Then the map $\phi$ is continuous.
\end{lemma}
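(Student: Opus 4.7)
The goal is to show $\phi\colon X\to \mathcal{M}\subset\mathcal{B}(\mathcal{F}(\mathbb{C}^d))$ is continuous with respect to the strong operator topology. Since $X$ is a topological space and SOT has the sub-basis $\{T:\|Tv-T_0 v\|<\varepsilon\}$ over all $v\in\mathcal{F}(\mathbb{C}^d)$, $T_0\in\mathcal{B}(\mathcal{F}(\mathbb{C}^d))$ and $\varepsilon>0$, it suffices to prove that for every fixed $v\in\mathcal{F}(\mathbb{C}^d)$ the map $x\mapsto \phi(x)v$ from $X$ to $\mathcal{F}(\mathbb{C}^d)$ is continuous.

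The plan is to reduce this to the hypothesis on the components $\phi^m$ by a standard three-$\varepsilon$ truncation argument, exploiting two facts recorded earlier: (a) each $\phi(x)$ is a graded operator, hence decomposes as the Hilbert direct sum $\phi(x)=\bigoplus_{k\ge 0}\phi^k(x)$ where $\phi^k(x)=p^k_{I_x}\in\mathcal{B}((\mathbb{C}^d)^{\otimes k})$; and (b) each $\phi(x)$ is a projection, so $\|\phi(x)\|\le 1$ uniformly in $x$.

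Fix $x_0\in X$, $v\in\mathcal{F}(\mathbb{C}^d)$ and $\varepsilon>0$. First, choose $N\in\mathbb{Z}_{\ge 0}$ so that $v_N:=\sum_{k=0}^N v_k$, where $v_k\in(\mathbb{C}^d)^{\otimes k}$ is the $k$-th homogeneous component of $v$, satisfies $\|v-v_N\|<\varepsilon/3$. By the graded structure, $\phi(x)v_N=\sum_{k=0}^N\phi^k(x)v_k$. Each $\phi^k\colon X\to\mathcal{B}((\mathbb{C}^d)^{\otimes k})$ is continuous by hypothesis; since the target is finite-dimensional, this is continuity in operator norm. Thus for each $k\in\{0,1,\ldots,N\}$ there is a neighborhood $U_k\ni x_0$ on which $\|\phi^k(x)v_k-\phi^k(x_0)v_k\|<\varepsilon/(3(N+1))$. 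Let $U=\bigcap_{k=0}^N U_k$.

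For $x\in U$ we then have
\[
\|\phi(x)v-\phi(x_0)v\|\le \|\phi(x)(v-v_N)\|+\|\phi(x)v_N-\phi(x_0)v_N\|+\|\phi(x_0)(v_N-v)\|.
\]
Using $\|\phi(x)\|,\|\phi(x_0)\|\le 1$ the outer terms are each bounded by $\varepsilon/3$, and the middle term is bounded by $\sum_{k=0}^N\|\phi^k(x)v_k-\phi^k(x_0)v_k\|<\varepsilon/3$. Hence $\|\phi(x)v-\phi(x_0)v\|<\varepsilon$ on $U$, proving SOT continuity.

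The only subtle point is the graded decomposition of $\phi(x)v_N$: this relies on $I_x$ being a graded ideal, which in turn forces $Ev(I_x)=\bigoplus_k Ev(I_x^k)$ and therefore both $Ev(I_x)^\perp$ and the projection $\phi(x)$ onto it to respect the grading. Everything else is routine finite-dimensional continuity and uniform boundedness of orthogonal projections; I do not expect any further obstacle.
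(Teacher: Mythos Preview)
Your proof is correct and follows essentially the same three-$\varepsilon$ truncation argument as the paper: approximate $v$ by a finite sum of homogeneous components, use the graded decomposition of $\phi(x)$ together with the continuity of each $\phi^k$ on finitely many components, and control the tails via $\|\phi(x)\|\le 1$. The only cosmetic difference is that the paper chooses the per-component bound $\varepsilon/(3\sqrt{m+1})$ and combines them via the Pythagorean identity, whereas you use $\varepsilon/(3(N+1))$ and the triangle inequality; both work.
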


\begin{proof}
By the definition of $\mathcal{M}$, we need to show that for each $v\in \mathcal{F}(\mathbb{C}^d)$ the map $x\mapsto \phi(x)(v)$ is continuous. Take $x_0\in X$, $\varepsilon>0$.

Since the subspace $Ev(\mathcal{P}_d)\subset\mathcal{F}(\mathbb{C}^d)$ is dense, there exists $p\in \mathcal{P}_d$ such that $w= Ev(p)$, $\|w-v\|<\frac{\varepsilon}{3}$. Consider $p=p_0+\ldots+p_m$, $p_i\in \mathcal{P}_d^i$. Then for every $i=0,\ldots,m$ there exists an open neighborhood $U_i\ni x_0$ such that for every $x\in U_i$ $$\|\phi^i(x)(Ev(p_i)) -\phi^i(x_0)(Ev(p_i))\|<\frac{\varepsilon}{3\sqrt{m+1}}.$$
Take $U=U_0\cap\ldots\cap U_m$, $x\in U$. We use Lemma \ref{graded_norm} and obtain that $$\|\phi(x)(Ev(p))-\phi(x_0)(Ev(p))\|^2=\sum\limits_{i= 0}^m\|\phi^i(x)(Ev(p_i))-\phi^i(x_0)(Ev(p_i))\|^2<(m+1)\frac{\varepsilon^2 }{9(m+1)}=\left(\frac{\varepsilon}{3}\right)^2.$$ Hence, $$\|\phi(x)(w)-\phi(x_0)(w)\ |=\|\phi(x)(Ev(p))-\phi(x_0)(Ev(p))\|<\frac{\varepsilon}{3}.$$ Therefore, $$\|\phi (x)(v)-\phi(x_0)(v)\|\leq \|\phi(x)(v)-\phi(x)(w)\|+\|\phi(x)(w )-\phi(x_0)(w)\|+\|\phi(x_0)(w)-\phi(x_0)(v)\|<\varepsilon.$$
So, for any $\varepsilon>0$, there exists a neighborhood $U\ni x_0$ such that $\|\phi(x)(v)-\phi(x_0)(v)\|<\varepsilon$ for all $ x\in U$. Hence, the map $x\mapsto \phi(x)(v)$ is continuous.
\end{proof}

\begin{theorem} \label{banach_necessary}
Consider a bundle $(E,X,\pi)$ of Banach algebras such that $\pi^{-1}(x)\cong \mathcal{A}_d/\overline{I_x}$ and a section $S_i: x \mapsto s_i+\overline{I_x}$ is continuous for each $i=1,\ldots,d$. Then the map $\phi: X\to \mathcal{M},$ such that for each $x\in X$ the ideal $I_x$ is uniquely determined by the condition $\phi(x)=p_{I_x}$, is continuous.
\end{theorem}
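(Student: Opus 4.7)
The plan is to reduce the problem, via Lemma~\ref{one_to_all}, to showing that for each $m \geq 0$ the map $\phi^m : X \to \mathcal{B}((\mathbb{C}^d)^{\otimes m})$, $x \mapsto p_{I_x}|_{(\mathbb{C}^d)^{\otimes m}}$, is continuous; since the target is finite-dimensional, operator-norm continuity and strong operator continuity coincide, so it suffices to verify continuity in the operator norm.

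First I would verify the hypothesis common to Lemmas~\ref{rank}, \ref{Gr} and \ref{one_to_all}: namely, that for every $p \in \mathcal{P}_d$ the map $x \mapsto \|p + \overline{I_x}\|_{\mathcal{A}_d/\overline{I_x}}$ is continuous. This is immediate from the assumptions: the sections $S_i$ are continuous, and by the bundle axioms A1, A3 addition and multiplication are continuous, so for any noncommutative polynomial $p$ the section $x \mapsto \pi_d^\phi(x)(p)$ is continuous; axiom A5 then gives continuity of its norm.

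Next, fix $m$ and $x_0 \in X$. By Lemma~\ref{Gr} there is an open neighborhood $U \ni x_0$ on which $\dim \operatorname{Ker}(f_x)$ is constant, equal to some $k$, and the map $\psi^m : U \to \mathrm{Gr}(k, d^m)$, $x \mapsto \operatorname{Ker}(f_x)$, is continuous. Using Lemma~\ref{graded_norm}, one sees that $\operatorname{Ker}(f_x) = \mathcal{P}_d^m \cap \overline{I_x} = I_x^m$, so transferring along the linear isomorphism $Ev : \mathcal{P}_d^m \to (\mathbb{C}^d)^{\otimes m}$ (which we can turn into a unitary isomorphism by giving $\mathcal{P}_d^m$ the obvious inner product) yields a continuous map $U \to \mathrm{Gr}(k, d^m)$, $x \mapsto Ev(I_x^m)$. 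Composing with the standard continuous map $\mathrm{Gr}(k, d^m) \to \mathcal{B}((\mathbb{C}^d)^{\otimes m})$ sending a subspace to the orthogonal projection onto its orthogonal complement, we conclude that $\phi^m|_U = p^m_{I_x}$ is continuous in the operator norm on $U$. Since $x_0$ was arbitrary, $\phi^m$ is continuous globally, and Lemma~\ref{one_to_all} then yields the continuity of $\phi$.

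The main obstacle is actually already done for us by the earlier Grassmannian lemma: one only needs to bridge from ``the family of subspaces $I_x^m$ depends continuously on $x$'' to ``the orthogonal projections $p^m_{I_x}$ depend continuously on $x$''. This relies on the well-known fact that on a finite-dimensional inner-product space the map sending a subspace to the orthogonal projection onto it is continuous on the Grassmannian (for example, because it is the image of a smooth map from a Stiefel manifold, or directly by Gram--Schmidt on a continuously varying basis). Everything else in the argument is formal composition of ingredients already assembled in Section~5.
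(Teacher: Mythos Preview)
Your argument is correct and follows essentially the same route as the paper's proof: verify that $x\mapsto\|p+\overline{I_x}\|$ is continuous via the bundle axioms, invoke Lemma~\ref{Gr} to get continuity of $x\mapsto \operatorname{Ker}(f_x)$ into the Grassmannian, pass to the orthogonal projection $p^m_{I_x}$, and finish with Lemma~\ref{one_to_all}. The only cosmetic difference is that the paper first reduces to the case of connected $X$ so that $\dim\operatorname{Ker}(f_x)$ is globally constant, whereas you work locally on a neighborhood of each $x_0$; your version is arguably cleaner since continuity is a local property anyway.
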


\begin{proof}
We can assume without loss of generality that $X$ is connected.

Consider the homomorphism $F: \mathcal{P}_d\to \Gamma(X,E)$, $s_i\mapsto S_i$. We know that for each $p\in \mathcal{P}_d$ the map $X\to \mathbb{R}_{\geq 0}$, $x\mapsto \|p\|_{\mathcal{A }_d/\overline{I_x}}=\|F(p)(x)\|$ is continuous.

It follows from Lemma \ref{Gr} and the connectedness of $X$ that the function $\dim \operatorname{Ker} (f_{x})$ on $X$ is constant. Let us denote its value by $k$. Lemma \ref{Gr} implies that the map $\psi^m: X\to Gr(k,d^m)$ given by the condition $x\mapsto \operatorname{Ker} (f_{x})$ is continuous. Therefore, the map ${\phi^m: X\to \mathcal{B}((\mathbb{C}^d)^{\otimes m})}$ given by the condition ${x\mapsto p^m_{\operatorname{Ker} (f_{x})^{\perp}}=p^m_{I_x}}$ is continuous. We use Lemma \ref{one_to_all} and obtain that $\phi: X\to \mathcal{M}$ is continuous.
\end{proof}

\begin{theorem}
Consider a bundle $(E,X,\pi,\mathcal{N})$ of locally convex algebras such that $\pi^{-1}(x)\cong \mathcal{F}_d/\overline{I_x }$ and the section $S_i: x\mapsto s_i+\overline{I_x}$ is continuous for every $i=1,\ldots,d$. Suppose that for some $r\in (0,1)$ the map $\sigma_r: E\to \mathbb{R}_{\geq 0}$ defined on each layer as $\pi^{-1}(x )\ni e \mapsto \|e\|_r$ is continuous. Then the map $\phi: X\to \mathcal{M}$, such that for each $x\in X$ the ideal $I_x$ is uniquely determined by the condition $\phi(x)=p_{I_x}$, is continuous.
\end{theorem}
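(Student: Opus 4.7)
The plan is to reduce to the argument already used in Theorem~\ref{banach_necessary}. Its proof uses the Banach bundle structure only through the continuity of the scalar functions $x\mapsto \|q+\overline{I_x}\|_{\mathcal{A}_d/\overline{I_x}}$ for $q\in \mathcal{P}_d$; once that continuity is in hand, Lemmas~\ref{graded_norm}--\ref{one_to_all} apply verbatim (after passing to a connected component of $X$, as in Theorem~\ref{banach_necessary}) and yield the continuity of $\phi$. So the task is to verify this continuity under our present hypotheses.

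For every $p\in \mathcal{P}_d$ the LCA-bundle section $\gamma_p : x\mapsto p+\overline{I_x}$ is continuous: it is assembled from the hypothesized continuous sections $S_1,\ldots, S_d$ and from constant sections by repeated application of the continuous addition and multiplication maps (axioms A1--A3). Composing with the continuous scalar function $\sigma_r$, I obtain continuity of $x\mapsto \|p+\overline{I_x}\|_r$ on $X$.

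Next, write $p^r=\sum r^{|\alpha|}a_\alpha s_\alpha\in \mathcal{P}_d\subset \mathcal{A}_d$ for $p=\sum a_\alpha z_\alpha$. I would identify the previous quantity with the Banach norm on $\mathcal{A}_d/\overline{I_x}$ after rescaling:
\[
\|p+\overline{I_x}\|_r\;=\;\|p^r+\overline{I_x}\|_{\mathcal{A}_d/\overline{I_x}}.
\]
This is essentially contained in Theorem~\ref{inverse_limit}: the canonical homomorphism $\theta_r\circ \pi_r : \mathcal{F}_d\to \mathcal{A}_d^r=\mathcal{A}_d$ appearing in its proof sends $p$ to $p^r$ and satisfies $\|p\|_r=\|p^r\|_{\mathcal{A}_d}$; since $I_x$ is graded, the rescaling $p\mapsto p^r$ preserves $I_x$ setwise, and the map descends to an isometry on quotients (in the quotient $\|\cdot\|_r$-norm on the left). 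Because $p\mapsto p^r$ is a linear bijection of $\mathcal{P}_d$ onto itself (with inverse dividing coefficients by $r^{|\alpha|}$), continuity of $x\mapsto \|p+\overline{I_x}\|_r$ for all $p\in \mathcal{P}_d$ transfers to continuity of $x\mapsto \|q+\overline{I_x}\|_{\mathcal{A}_d/\overline{I_x}}$ for all $q\in \mathcal{P}_d$, as needed.

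The main technical obstacle is the norm identification in the preceding paragraph. The subtlety is that $\overline{I_x}$ on the left denotes the closure in the LCA topology of $\mathcal{F}_d$, while on the right it denotes the closure in the Banach algebra $\mathcal{A}_d$; verifying that the two quotient norms agree under $p\mapsto p^r$ requires combining the isometry property of $\theta_r$ (in the $\|\cdot\|_r$ norm) with the fact that $I_x$ being graded is preserved by the rescaling, so that $\theta_r(I_x)=I_x$ and the closures correspond under the extension of $\theta_r$ to the completion. Once this is settled, the cascade Lemma~\ref{rank}~$\to$~Lemma~\ref{Gr}~$\to$~Lemma~\ref{one_to_all} imported from the proof of Theorem~\ref{banach_necessary} delivers the required continuity of $\phi$.
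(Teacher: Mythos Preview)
Your proposal is correct and follows essentially the same route as the paper. The paper's proof is two sentences: it defines the homomorphism $F:\mathcal{P}_d\to\Gamma(X,E)$ by $s_i\mapsto\frac{1}{r}S_i$, asserts $\|p\|_{\mathcal{A}_d/\overline{I_x}}=\sigma_r\circ F(p)(x)$, and then invokes the argument of Theorem~\ref{banach_necessary}. This is your argument with the rescaling applied up front rather than at the end: the paper builds the section $x\mapsto p^{1/r}+\overline{I_x}$ and reads off $\|p+\overline{I_x}\|_{\mathcal{A}_d/\overline{I_x}}$ directly from $\sigma_r$, whereas you build $x\mapsto p+\overline{I_x}$, apply $\sigma_r$, and then use the bijection $p\mapsto p^r$ on $\mathcal{P}_d$ to convert. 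The norm identification you flag as a ``technical obstacle'' is exactly what the paper's equality $\|p\|_{\mathcal{A}_d/\overline{I_x}}=\sigma_r\circ F(p)(x)$ is asserting without comment; your justification via Theorem~\ref{inverse_limit} and the gradedness of $I_x$ is the right way to fill that in.
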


\begin{proof}
Consider the homomorphism $F: \mathcal{P}_d\to \Gamma(X,B)$, $s_i\mapsto \frac{1}{r}S_i$. We know that for each $p\in \mathcal{P}_d$ the map $X\to \mathbb{R}_{\geq 0}$, $x\mapsto \|p\|_{\mathcal{A }_d/\overline{I_x}}=\sigma_r\circ F(p)(x)$ is continuous.

Similarly to the proof of Theorem \ref{banach_necessary}, we obtain that the map $\phi: X\to \mathcal{M}$, where $\phi(x)=p_{I_x}$ for all $x\in X$, is continuous.
\end{proof}

\section{Noncommutative holomorphic functions from a different point of view}

In this section, we give an interpretation of the algebras of the form $\mathcal{F}_d/\overline{I}$ considered in \S 4 in terms of the matrix theory of functions of several free variables \cite{agler_book, vinnikov_book}.
Here we will rely on the work \cite{solomon_shalit_shamovich} by G.~Solomon, O.~M.~Shalit and E.~Shamovich.

Let us first recall some notions and results from \cite{agler_book,  vinnikov_book, solomon_shalit_shamovich}.
We will denote the algebra of matrices of size $n\times n$ by $M_{n\times n}$. We identify it with the algebra $\mathcal{B}(\mathbb{C}^n)$ and thus introduce an operator norm on $M_{n\times n}$. Consider the topological space $\mathbb{M}^d=\bigsqcup_{n\in \mathbb{N}} \mathcal{B}(\mathbb{C}^n)^d$.

\begin{definition}
A subset $\Omega\subset \mathbb{M}^d$ is called a \emph{noncommutative set} (see \cite{agler_book, vinnikov_book}) if  $A\oplus B\in \Omega$ for any $A,B\in \Omega$, where $A\oplus B=\begin{pmatrix}
  A&0\\
  0&B
\end{pmatrix}$ is the direct sum of matrices.
\end{definition}

Each space $M_{n\times n}^d$ can be equipped with an operator norm: $$\|(X_1,\ldots,X_d)\|=\|X_1X_1^*+\ldots+X_dX_d^*\|.$$
\textit{The open noncommutative $d$-dimensional ball} of radius $r$ is a subset $\mathbb{B}_r^d\subset \mathbb{M}^d$ consisting of elements, whose norm is less than $r$:
$$\mathbb{B}_r^d=\{(X_1,\ldots,X_d)\in \mathbb{M}^d: \|(X_1,\ldots,X_d)\|<r\}.$$ It is easy to see that it is an open noncommutative set.

\begin{definition}
(See  \cite{agler_book}, \cite{vinnikov_book}) \emph{A noncommutative function} on a noncommutative subset $\Omega\subset \mathbb{M}^d$ is a sequence of maps $f=(f_n)_{n\in \mathbb{N}}$, $f_n: M_{n\times n}^d\cap \Omega \to M_{n\times n}$ such that the following two conditions are satisfied:

$\bullet$ For any $A\in M_{n\times n}^d\cap \Omega$, $B\in M_{m\times m}^d\cap \Omega$, the equality $f_{n+m}( A\oplus B)=f_n(A)\oplus f_m(B)$ holds.

$\bullet$ For any point $A\in M_{n\times n}^d\cap \Omega$ and any matrix $S\in GL(n)$ such that $SAS^{-1}\in \Omega$ the equality ${f_n(SAS^{-1})=Sf_n(A)S^{-1}}$ holds.
\end{definition}

Thus, a noncommutative function defines a map $f: \mathbb{M}^d\to \mathbb{M}^1$.
It is clear that noncommutative functions can be pointwise added and multiplied.

Note that for any $r>0$ noncommutative polynomials are noncommutative functions on the ball of radius $r$.
Namely, if $X=(X_1,\ldots,X_d)\in M_{n\times n}^d$, we define a homomorphism $\mathcal{P}_d\to M_{n\times n}$, $ p\mapsto p_n^X$ by conditions $s_i\mapsto X_i$. Then the polynomial $p$ corresponds to a noncommutative function $(p_n)_{n\in \mathbb{N}}$, where the map $p_n: M_{n\times n}^d\cap \mathbb{B}_r^d \to M_{n\times n}$ takes $X$ to $p_n^X$.

\begin{definition}
(See \cite{solomon_shalit_shamovich}) A subset $\mathcal{V}\subset \mathbb{M}^d$ is called a \emph{noncommutative algebraic variety} if $$\mathcal{V}=\mathcal{V}_ {\Omega}^S=\{X\in \Omega: \, \forall p\in S\,\, p(X)=0\}$$ for some open noncommutative set $\Omega\subset \mathbb{ M}^d$ and arbitrary $S\subset \mathcal{P}_d$.

If $S\subset \mathcal{P}_d$ is a homogeneous ideal, then $\mathcal{V}_{\Omega}^S$ is called a \emph{homogeneous} noncommutative algebraic variety.
\end{definition}

\begin{definition} (See \cite{agler_book}, \cite{vinnikov_book})
For an open noncommutative set $\Omega\subset \mathbb{M}^d$, a noncommutative function $f: \Omega\to \mathbb{M}^1$ is called \emph{a noncommutative holomorphic function} if it is locally bounded, that is, for each $X\in \Omega$ there exists an open neighborhood $U\subset \Omega$ such that $\sup\limits_{Y\in U}\|f(Y)\|<\infty$.

In particular, if $\sup\limits_{X\in \Omega}\|f(X)\|<\infty$, then $f$ is a noncommutative holomorphic function.
\end{definition}

\begin{remark}
It is known (see \cite[Theorem 12.17]{agler_book}) that for any $n\in \mathbb{N}$ a noncommutative holomorphic function is holomorphic on $M_{n\times n}^d\cap \Omega$ in the usual sense.
\end{remark}

\begin{definition}
(See \cite{solomon_shalit_shamovich}) Consider a noncommutative algebraic variety $\mathcal{V}\subset \mathbb{M}^d$.
A noncommutative function ${f: \mathcal{V}\to \mathcal{M}^1}$ is \emph{holomorphic on $\mathcal{V}$} if for each $X\in \mathcal{V}$ there exists an open noncommutative neighborhood $X\in U\subset \Omega$ and a noncommutative function $g: U\to \mathbb{M}^1$ such that $f|_{\mathcal{V}\cap U}=g |_{\mathcal{V}\cap U}$ and $\sup\limits_{Y\in U}\|g(Y)\|<\infty$.

The algebra $\mathcal{H}^\infty(\mathcal{V})$ \emph{of bounded holomorphic functions} on $\mathcal{V}$ consists of all noncommutative holomorphic functions ${f: \mathcal{V}\to \mathcal{M}^1}$ such that $\|f\|:=\sup\limits_{X\in \mathcal{V}}\|f(X)\|<\infty$.
\end{definition}

Following \cite{amazing_stuff}, we denote by $\mathcal{H}^\infty(\mathcal{B}(\mathcal{X})_r^d)$ the set of formal free series $F\in \mathcal{F}_d ^r$ such that $\|F\|_r<\infty$.
By \cite[Theorem 3.1]{amazing_stuff}, $\mathcal{H}^\infty(\mathcal{B}(\mathcal{X})_r^d)$ is a subalgebra of $\mathcal{F}_d^ r$ and a Banach algebra with respect to the norm $\|\cdot \|_r$.

\begin{theorem} \label{two_balls}
(See \cite[Theorem 3.1]{solomon_shalit_shamovich}) The map $$\Phi: \mathcal{H}^\infty(\mathcal{B}(\mathcal{X})_1^d)\to \mathcal{H }^\infty(\mathbb{B}_1^d), \,\,\, F\mapsto f,$$ given by $f(A)=F(A)$ for any $A\in \mathbb{ B}_1^d$, is an isometric isomorphism of Banach algebras. Here $f(A)$ is the value of the noncommutative function $f$ on $A\in \mathbb{B}_1^d$, and $F(A)$ is the result of substituting $A$ into the formal free series $F$ (see \S 2).
\end{theorem}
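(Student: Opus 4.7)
The plan is to construct $\Phi$ explicitly, show it is a contractive algebra homomorphism, and then build an inverse using the Taylor–Taylor expansion of noncommutative holomorphic functions. I break the argument into four stages.

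First I would verify that $\Phi$ is well-defined and that the image is actually a bounded noncommutative holomorphic function. For $A=(A_1,\ldots,A_d)\in \mathbb{B}_1^d$ we have $\|A_1A_1^*+\ldots+A_dA_d^*\|=x^2<1$, so the noncommutative von Neumann inequality (recalled in \S2) yields $\|F_k(A)\|\leq \|F_k(xs_1,\ldots,xs_d)\|$ for every $k$. Summing over $k$ shows that $F(A):=\sum_k F_k(A)$ converges absolutely in norm and satisfies $\|F(A)\|\leq \|F\|_x\leq \|F\|_1$. The noncommutative function axioms (compatibility with direct sums and with similarities) hold termwise for each monomial $s_{i_1}\cdots s_{i_k}\mapsto A_{i_1}\cdots A_{i_k}$ and are preserved under the norm-convergent sum.

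Next I would record that $\Phi$ is an algebra homomorphism and contractive. Multiplicativity of the map $F\mapsto f$ follows from the Cauchy-product identity for absolutely convergent operator series and the fact that the formal product in $\mathcal{F}_d^1$ is the one inherited from evaluation. The contractive estimate $\|\Phi(F)\|\leq \|F\|_1$ is already contained in the bound of the previous paragraph, taking the supremum over $A\in \mathbb{B}_1^d$.

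The harder direction of the isometry, $\|F\|_1\leq \|\Phi(F)\|$, I would obtain via a compression argument. Fix $y<1$ and let $P_N$ denote the orthogonal projection of $\mathcal{F}(\mathbb{C}^d)$ onto $\mathcal{F}^N=\bigoplus_{k\leq N}(\mathbb{C}^d)^{\otimes k}$, a finite-dimensional space on which $P_N s_j P_N$ becomes a matrix tuple; since $s_j$ is an isometry and these compressions have norm at most one, $(yP_Ns_1P_N,\ldots,yP_Ns_dP_N)\in\mathbb{B}_1^d$. On a fixed vector $v\in \mathcal{F}^M$, the first $N-M$ partial sums of $F(yP_Ns_1P_N,\ldots)v$ coincide with those of $F(ys_1,\ldots,ys_d)v$, so one recovers $\|F(ys_1,\ldots,ys_d)\|$ as a supremum of values $\|\Phi(F)(A)\|$ with $A\in \mathbb{B}_1^d$. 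Sending $y\to 1$ and using the formula $\|F\|_1=\sup_{y<1}\|F(ys_1,\ldots,ys_d)\|$ from the von Neumann inequality established in \S2 completes the isometry.

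Finally I would prove surjectivity. Given $f\in \mathcal{H}^\infty(\mathbb{B}_1^d)$, the general theory of noncommutative functions (the Taylor–Taylor series of Kaliuzhnyi-Verbovetskyi–Vinnikov, cited as \cite{vinnikov_book, agler_book}) produces coefficients $a_\alpha\in \mathbb{C}$ with $f(A)=\sum_\alpha a_\alpha A^\alpha$ converging on $\mathbb{B}_1^d$ and satisfying Cauchy-type estimates $\|\sum_{|\alpha|=k} a_\alpha A^\alpha\|\leq \|f\|\cdot x^k$ when $\|A\|\leq x<1$. Setting $F:=\sum_\alpha a_\alpha z_\alpha$ and applying the isometry just proved (in reverse), I get $\|F_k(s_1,\ldots,s_d)\|$ bounded by the norm of the $k$th homogeneous part of $f$, so $\sum_k x^k \|F_k(s_1,\ldots,s_d)\|<\infty$ for every $x<1$. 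By the criterion cited from \cite[Theorem 1.5]{amazing_stuff} this places $F$ in $\mathcal{F}_d^1$, and $\|F\|_1\leq \|f\|$ together with $\Phi(F)=f$ on $\mathbb{B}_1^d$ (by uniqueness of the Taylor expansion) gives both surjectivity and the remaining norm inequality.

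The main obstacle I anticipate is the surjectivity step: extracting coefficients $a_\alpha$ from $f$ and controlling their growth requires the full noncommutative Taylor machinery, so the argument rests on importing those results cleanly. The compression step in the isometry direction is delicate but routine once the correct topology (pointwise convergence on each $\mathcal{F}^M$) is identified.
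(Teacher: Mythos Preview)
The paper does not supply its own proof of this theorem: it is stated with the attribution ``(See \cite[Theorem 3.1]{solomon_shalit_shamovich})'' and then used as a black box. There is therefore no proof in the paper to compare your proposal against.

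That said, your outline is essentially the standard argument and is the one found in the cited source. The four stages are correct in spirit. A couple of remarks on the details. In the compression step, the verification that the truncated shifts lie in $\mathbb{B}_1^d$ is right: since $s_j^*$ lowers degree, one has $(P_N s_j P_N)(P_N s_j P_N)^* = s_j s_j^*|_{\mathcal{F}^N}$, so the row norm is exactly $y<1$. The claim that the compressed and uncompressed partial sums agree on $v\in\mathcal{F}^M$ for $k\le N-M$ is also correct for the reason you indicate, and passing to the strong limit recovers $\|F(ys_1,\ldots,ys_d)\|$. In the surjectivity step, the Cauchy estimate you state is not quite the bound that comes out of the Taylor--Taylor theory directly; what one actually gets (via the nc Schwarz lemma or the homogeneous expansion in \cite{vinnikov_book, solomon_shalit_shamovich}) is $\|\sum_{|\alpha|=k} a_\alpha s_\alpha\|\le \|f\|$ for each $k$, which is what you need for the convergence criterion from \cite[Theorem~1.5]{amazing_stuff}. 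With that adjustment the argument closes.
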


From this theorem it is easy to deduce a similar statement about balls of arbitrary radius. Namely, the following result holds true.

\begin{corollary}
The map $$\Phi_r: \mathcal{H}^\infty(\mathcal{B}(\mathcal{X})_r^d)\to \mathcal{H}^\infty(\mathbb{B}_r^d ), \,\,\, F\mapsto f,$$ given by $f(A)=F(A)$ for any $A\in \mathbb{B}_r^d$, is an isometric isomorphism of Banach algebras.
\end{corollary}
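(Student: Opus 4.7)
The plan is to reduce the radius-$r$ statement to the $r=1$ case of Theorem \ref{two_balls} by rescaling both sides. On the formal-series side I use the substitution $\sigma_r: \mathcal{F}_d^r\to \mathcal{F}_d^1$, $F=\sum a_\alpha z_\alpha \mapsto F^r=\sum r^{|\alpha|}a_\alpha z_\alpha$, which is clearly an algebra homomorphism with inverse $\sigma_{1/r}$. Because $F^r(T_1,\ldots,T_d)=F(rT_1,\ldots,rT_d)$ and $(T_1,\ldots,T_d)\in[\mathcal{B}(H)^d]_y$ iff $(rT_1,\ldots,rT_d)\in[\mathcal{B}(H)^d]_{ry}$, one obtains $\|\sigma_r F\|_y=\|F\|_{ry}$; in particular $\sigma_r$ restricts to an isometric algebra isomorphism $\mathcal{H}^\infty(\mathcal{B}(\mathcal{X})_r^d)\to \mathcal{H}^\infty(\mathcal{B}(\mathcal{X})_1^d)$ (and also sends $\mathcal{F}_d^r$ to $\mathcal{F}_d^1$, since $y<1$ gives $ry<r$).

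On the matrix side, the linear bijection $A\mapsto rA$ carries $\mathbb{B}_1^d$ onto $\mathbb{B}_r^d$, and precomposition $\tau_r^*: f\mapsto (A\mapsto f(rA))$ is an algebra homomorphism $\mathcal{H}^\infty(\mathbb{B}_r^d)\to \mathcal{H}^\infty(\mathbb{B}_1^d)$: scaling by $r$ commutes with direct sums and with similarity transformations, so $\tau_r^*f$ is again a noncommutative function; it remains locally bounded, hence holomorphic; and plainly $\sup_{A\in\mathbb{B}_1^d}\|f(rA)\|=\sup_{B\in\mathbb{B}_r^d}\|f(B)\|$. The inverse is $\tau_{1/r}^*$, so $\tau_r^*$ is an isometric isomorphism of Banach algebras.

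It then remains to verify that the square closes: for $F\in \mathcal{H}^\infty(\mathcal{B}(\mathcal{X})_r^d)$ and $A\in \mathbb{B}_1^d$,
\[
(\tau_r^*\,\Phi_r F)(A)=(\Phi_r F)(rA)=F(rA)=F^r(A)=\Phi(\sigma_r F)(A),
\]
where $\Phi$ is the isomorphism of Theorem \ref{two_balls}. Hence $\tau_r^*\circ \Phi_r=\Phi\circ \sigma_r$, and since three of the four arrows are already isometric isomorphisms of Banach algebras, so is $\Phi_r=(\tau_r^*)^{-1}\circ \Phi\circ \sigma_r$. There is no real obstacle here; the whole content is the compatibility $F^r(A)=F(rA)$ between the two natural scaling operations, which is immediate from the definition of $F^x$ recalled in Section~2.
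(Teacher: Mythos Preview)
Your argument is correct and follows essentially the same route as the paper: both proofs reduce to Theorem~\ref{two_balls} via the rescaling isomorphisms on each side, the paper's $\gamma$ and $\gamma'$ being precisely the inverses of your $\sigma_r$ and $\tau_r^*$, so that your identity $\Phi_r=(\tau_r^*)^{-1}\circ\Phi\circ\sigma_r$ is the paper's $\Phi_r=\gamma'\circ\Phi\circ\gamma^{-1}$. If anything, you are slightly more explicit in checking the commutativity $F^r(A)=F(rA)$ that the paper leaves implicit.
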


\begin{proof}
Note that $\gamma: \mathcal{H}^\infty(\mathcal{B}(\mathcal{X})_1^d)\to \mathcal{H}^\infty(\mathcal{B}(\mathcal{X})_r^d)$, $F\mapsto F^{\frac{1}{r}}$ is an isometric isomorphism since ${\|F^{\frac{1}{r} }\|_r=\|F\|_1}$. On the other hand, the map $\gamma': \mathcal{H}^\infty(\mathbb{B}_1^d)\to \mathcal{H}^\infty(\mathbb{B}_r^d)$, given by $\gamma'(f)(A)=f(\frac{1}{r}A)$ is also an isometric isomorphism, since $\frac{1}{r}\mathbb{B}_r ^d=\mathbb{B}_1^d$. It remains to note that $\Phi_r=\gamma'\circ \Phi\circ \gamma^{-1}$ is the required isomorphism.
\end{proof}

The definition of uniform continuity for noncommutative functions repeats almost verbatim the corresponding definition for maps of metric spaces.

\begin{definition}
(See \cite{solomon_shalit_shamovich}) A noncommutative function $f: \Omega\to \mathbb{M}^1$ is called \emph{uniformly continuous} if for any $\varepsilon>0$ there exists $\delta>0$ such that $\|f(X)-f(Y)\|<
\varepsilon$ for each $n\in \mathbb{N}$, $X,Y\in \Omega\cap M_{n\times n}^d$, $\|X-Y\|<\delta$.
\end{definition}

As proved in \cite[Corollary 9.2]{solomon_shalit_shamovich}, noncommutative polynomials are uniformly continuous on any ball.

Consider  a noncommutative algebraic variety $\mathcal{V}\subset \mathbb{M}^d$.
Following \cite{solomon_shalit_shamovich}, we denote by $\mathcal{A}(\mathcal{V})$ the subalgebra in $\mathcal{H}^\infty(\mathcal{V})$ consisting of functions that can be  continued uniformly continuously to $\overline{\mathcal{V}}$.

\begin{theorem} \label{noncom_func_A}
(See \cite[Proposition 9.7]{solomon_shalit_shamovich}) For a homogeneous ideal $I\subset\mathcal{P}_d$ and a noncommutative algebraic subvariety $\mathcal{V}=\mathcal{V}_{\mathbb{B} _1^d}^I\subset \mathbb{B}_1^d$
there exists a unique isometric isomorphism of algebras $\Phi: \mathcal{A}_d/\overline{I}\to \mathcal{A}(\mathcal{V})$ such that $\Phi(s_i)(X_1,\ldots ,X_d)=X_i$.
\end{theorem}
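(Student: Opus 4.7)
The plan is to construct $\Phi$ first on the dense subalgebra generated by the $s_i+\overline{I}$ by sending each $s_i+\overline{I}$ to the coordinate function $X=(X_1,\dots,X_d)\mapsto X_i$, then extend by continuity, and finally check that the resulting map is isometric and surjective. For a polynomial $p\in\mathcal{P}_d$ the candidate image $\Phi_0(p)$ is simply the noncommutative polynomial evaluation $X\mapsto p(X_1,\dots,X_d)$; by \cite[Corollary 9.2]{solomon_shalit_shamovich} this is uniformly continuous on every bounded noncommutative subset of $\mathbb{M}^d$, so it restricts to an element of $\mathcal{A}(\mathcal{V})$. If $p\in I$ then $p$ vanishes identically on $\mathcal{V}$ by the definition of $\mathcal{V}=\mathcal{V}_{\mathbb{B}_1^d}^{I}$, so $\Phi_0$ descends to $\mathcal{P}_d/I$.

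The next step is to prove the contractive estimate $\|\Phi_0(p)\|_{\mathcal{A}(\mathcal{V})}\leq\|p+\overline{I}\|_{\mathcal{A}_d/\overline{I}}$. Pick any matrix tuple $X\in \overline{\mathcal{V}}\cap M_{n\times n}^d$; such a tuple satisfies $\|X_1X_1^*+\dots+X_dX_d^*\|\leq 1$ and $q(X)=0$ for every $q\in I$. Applying Theorem~\ref{general_kursovaya} we obtain a completely contractive homomorphism $\mathcal{A}_d^{I}\cong\mathcal{A}_d/\overline{I}\to M_{n\times n}$ sending $s_i+\overline{I}\mapsto X_i$, whence $\|p(X)\|\leq\|p+\overline{I}\|$. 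Taking the supremum over $X\in\overline{\mathcal{V}}$ yields the desired bound. Since $\mathcal{P}_d/I$ is dense in $\mathcal{A}_d/\overline{I}$ (as $\mathcal{P}_d$ is dense in $\mathcal{A}_d$), this contraction extends uniquely to $\Phi:\mathcal{A}_d/\overline{I}\to\mathcal{A}(\mathcal{V})$; uniqueness of $\Phi$ under the constraint $\Phi(s_i+\overline{I})(X)=X_i$ is immediate from this density.

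The main obstacle is the reverse inequality $\|p+\overline{I}\|\leq\sup_{X\in\overline{\mathcal{V}}}\|p(X)\|$, because the abstract norm on $\mathcal{A}_d/\overline{I}$ is computed against operators on the infinite-dimensional Fock space, while $\mathcal{A}(\mathcal{V})$ only sees matrices. To bridge this, use the finite-dimensional compressions of $s_j^{I}$: since $I$ is graded, $\mathcal{F}_d^{I}$ decomposes into its homogeneous parts and one can form $V_N=\mathcal{F}_d^{I}\cap\bigoplus_{k\leq N}(\mathbb{C}^d)^{\otimes k}$ together with the compressed operators $s_j^{I,N}=P_{V_N}s_j^{I}|_{V_N}$. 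A direct degree count shows $s_\alpha^{I,N}v=s_\alpha^{I}v$ whenever $|v|+|\alpha|\leq N$ and $0$ otherwise, which implies both that $q(s^{I,N})=0$ for every homogeneous $q\in I$ and that $\|p(s^{I,N})\|\to\|p(s^{I})\|=\|p+\overline{I}\|$ as $N\to\infty$. Because $(s_1^{I},\dots,s_d^{I})$ is a row contraction (being a compression of the row isometry $(s_1,\dots,s_d)$), so is its compression; scaling by $0<r<1$ one obtains matrix tuples $rs^{I,N}\in\mathcal{V}$ on which $\|p(rs^{I,N})\|\to\|p+\overline{I}\|$ as $r\to 1$ and $N\to\infty$. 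This yields the missing inequality, so $\Phi$ is isometric.

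Finally, surjectivity follows from combining the isometry just established with a density statement for polynomials in $\mathcal{A}(\mathcal{V})$: the image of $\Phi$ contains all noncommutative polynomials viewed as functions on $\mathcal{V}$, and, being isometric with complete domain, it is closed. Density of polynomial functions in $\mathcal{A}(\mathcal{V})$ (proved for homogeneous subvarieties in \cite{solomon_shalit_shamovich}) therefore forces the image to be the whole algebra, completing the construction of the isometric algebra isomorphism $\Phi:\mathcal{A}_d/\overline{I}\to\mathcal{A}(\mathcal{V})$.
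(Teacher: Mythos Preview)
The paper does not give its own proof of this statement; Theorem~\ref{noncom_func_A} is quoted with a reference to \cite[Proposition~9.7]{solomon_shalit_shamovich} and then used as a black box in \S6. So there is no in-paper argument to compare your proposal against.

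That said, your outline is essentially the standard route taken in the cited reference: contractivity of $\Phi$ comes from the universal property (Theorem~\ref{general_kursovaya} here), isometry comes from testing against finite-dimensional compressions of the model tuple $(s_1^{I},\dots,s_d^{I})$ scaled into the open ball, and surjectivity comes from density of polynomials in $\mathcal{A}(\mathcal{V})$. The degree-count argument you give for $q(s^{I,N})=0$ and for $\|p(s^{I,N})\|\to\|p(s^{I})\|$ is correct because $s_j^{I}$ raises degree by exactly one (the ideal $I$ is graded, so $p_I$ is degree-preserving).

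One caution on the last step: you appeal to ``density of polynomial functions in $\mathcal{A}(\mathcal{V})$ (proved for homogeneous subvarieties in \cite{solomon_shalit_shamovich})''. In that reference the density is not an independent preliminary; it is essentially the content of Proposition~9.7 itself. If you want a genuinely self-contained argument rather than a repackaging of the citation, you should supply this step directly---for instance by lifting $f\in\mathcal{A}(\mathcal{V})$ to $\mathcal{H}^\infty(\mathbb{B}_1^d)$ via Theorem~\ref{noncom_func_hinf_quot}, using the known Ces\`aro/Fej\'er-type polynomial approximation in $\mathcal{A}(\mathbb{B}_1^d)$, and restricting back to $\mathcal{V}$. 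Without that, your surjectivity paragraph is circular relative to the source you cite.
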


In particular, $\mathcal{A}_d\cong \mathcal{A}(\mathcal{B}(\mathcal{X})_1^d)\cong \mathcal{A}(\mathbb{B}_1^d )$

The proof of the next result almost verbatim repeats the proof of the corollary from Theorem \ref{two_balls}.

\begin{corollary}
Consider a homogeneous ideal $I\subset\mathcal{P}_d$. Denote by $\mathcal{V}$ the noncommutative algebraic variety $\{X\in \mathbb{M}^d: \, \forall p\in I\,\, p(X)=0\}$.
Then the map
\[\Phi_r: \mathcal{A}(\mathcal{B}(\mathcal{X})_r^d)/\overline{I}\to \mathcal{A}(\mathcal{V}\cap \mathbb {B}_r^d), \,\, F+\overline{I} \mapsto f,\tag{$\ast$}
\]
given by $f(A)=F(A)$ for any $A\in \mathcal{V}\cap \mathbb{B}_r^d$ is an isometric isomorphism of Banach algebras. Here $f(A)$ is the value of the noncommutative function $f$ on $A\in \mathcal{V}\cap \mathbb{B}_r^d$, and $F(A)$ is the result of the substitution $A$ into a formal free series $F$ (see \S 2).
\end{corollary}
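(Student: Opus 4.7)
The plan is to reduce the statement to the case $r=1$, which is Theorem \ref{noncom_func_A}, by conjugating with two scaling isomorphisms exactly as in the proof of the corollary following Theorem \ref{two_balls}.

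First, I would introduce the \emph{free} scaling $\gamma:\mathcal{A}(\mathcal{B}(\mathcal{X})_1^d)\to \mathcal{A}(\mathcal{B}(\mathcal{X})_r^d)$, $F\mapsto F^{1/r}$; as in the earlier corollary, $\|F^{1/r}\|_r=\|F\|_1$, so $\gamma$ is an isometric isomorphism of Banach algebras. The crucial observation is that $I$ is \emph{homogeneous}: a degree-$k$ element $p\in I$ is sent to $r^{-k}p\in I$, so $\gamma(I)=I$, and by continuity $\gamma(\overline{I})=\overline{I}$. Hence $\gamma$ descends to an isometric isomorphism $\widetilde{\gamma}:\mathcal{A}(\mathcal{B}(\mathcal{X})_1^d)/\overline{I}\to \mathcal{A}(\mathcal{B}(\mathcal{X})_r^d)/\overline{I}$.

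Second, I would construct the \emph{geometric} scaling $\gamma':\mathcal{A}(\mathcal{V}\cap\mathbb{B}_1^d)\to \mathcal{A}(\mathcal{V}\cap\mathbb{B}_r^d)$ by $\gamma'(f)(A):=f(A/r)$. Since $\mathcal{V}$ is cut out by homogeneous polynomials, it is closed under scalar multiplication, so $A/r\in\mathcal{V}\cap\mathbb{B}_1^d$ whenever $A\in\mathcal{V}\cap\mathbb{B}_r^d$; the identity $\tfrac{1}{r}(\mathcal{V}\cap\overline{\mathbb{B}_r^d})=\mathcal{V}\cap\overline{\mathbb{B}_1^d}$ together with the fact that $A\mapsto A/r$ is a uniformly continuous bijection of these closed sets shows that the uniformly continuous extendability to the closure is preserved in both directions. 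Hence $\gamma'$ is an isometric isomorphism of Banach algebras.

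Finally, by Theorem \ref{noncom_func_A} (combined with the identification $\mathcal{A}_d\cong \mathcal{A}(\mathcal{B}(\mathcal{X})_1^d)$ from Theorem \ref{another_definition_of_A_d}) the map $\Phi_1:\mathcal{A}(\mathcal{B}(\mathcal{X})_1^d)/\overline{I}\to \mathcal{A}(\mathcal{V}\cap\mathbb{B}_1^d)$, $G+\overline{I}\mapsto (B\mapsto G(B))$, is an isometric isomorphism. I would then set $\Phi_r:=\gamma'\circ\Phi_1\circ\widetilde{\gamma}^{-1}$. A short unwinding confirms that this is the map claimed in the corollary: $\widetilde{\gamma}^{-1}(F+\overline{I})=F^{r}+\overline{I}$, $\Phi_1$ sends this class to $B\mapsto F^{r}(B)=F(rB)$, and applying $\gamma'$ yields $A\mapsto F(r\cdot A/r)=F(A)$ for $A\in\mathcal{V}\cap\mathbb{B}_r^d$. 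As a composition of isometric isomorphisms of Banach algebras, $\Phi_r$ is itself one. I do not anticipate a substantial obstacle; the only points that demand care are the two well-definedness checks above (preservation of $\overline{I}$ by $\gamma$, and of the uniform continuity condition by $\gamma'$), and both are immediate consequences of homogeneity.
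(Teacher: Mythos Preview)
Your proposal is correct and follows essentially the same approach as the paper's own proof: both reduce to the case $r=1$ via the scaling isomorphisms $\gamma\colon F\mapsto F^{1/r}$ on the free-series side and $\gamma'\colon f\mapsto f(\,\cdot\,/r)$ on the function side, then invoke Theorem~\ref{noncom_func_A} together with Theorem~\ref{another_definition_of_A_d}. The only difference is cosmetic---you first construct $\gamma$ on the full algebra and check it preserves $\overline{I}$ before passing to the quotient, whereas the paper writes $\gamma$ directly on the quotient---and your additional remarks on why $\gamma'$ preserves uniform continuous extendability are a welcome elaboration of a point the paper leaves implicit.
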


\begin{proof}
We know that $\mathcal{A}_d\cong \mathcal{A}(\mathcal{B}(\mathcal{X})_1^d)$, and hence $\Psi(z_i+\overline{I })(X_1,\ldots,X_d)=X_i$ defines an isometric isomorphism of algebras $\Psi: \mathcal{A}(\mathcal{B}(\mathcal{X})_1^d)/\overline{I}\to \mathcal{A}(\mathcal{V}\cap \mathbb{B}_1^d)$.

Note that $\gamma: \mathcal{A}(\mathcal{B}(\mathcal{X})_1^d)/\overline{I}\to \mathcal{A}(\mathcal{B}(\mathcal{X})_r^d)/\overline{I}$, $F\mapsto F^{\frac{1}{r}}$ is an isometric isomorphism since $\|F^{\frac {1}{r}}\|_r=\|F\|_1$. On the other hand, the map $\gamma': \mathcal{A}(\mathcal{V}\cap \mathbb{B}_1^d)\to \mathcal{A}(\mathcal{V}\cap \mathbb{ B}_r^d)$ given by $\gamma'(f)(A)=f(\frac{1}{r}A)$ is also an isometric isomorphism, since $\frac{1}{ r}(\mathcal{V}\cap\mathbb{B}_r^d)=\mathcal{V}\cap\mathbb{B}_1^d$. It remains to note that $\Phi_r=\gamma'\circ \Phi\circ \gamma^{-1}$ is the required isomorphism.
\end{proof}

%This also implies, in particular, that non-commutative polynomials are dense in $\mathcal{A}(\mathcal{V}_{\mathbb{B}_1^d}^I)$.

The following result follows from \cite[Theorems 5.2 and 5.4]{solomon_shalit_shamovich}.

\begin{theorem}\label{noncom_func_hinf_quot}
For a noncommutative algebraic subvariety $\mathcal{V}\subset \mathbb{B}_1^d$
the restriction map ${\mathcal{H}^\infty(\mathbb{B}_1^d)\to \mathcal{H}^\infty(\mathcal{V})}$ is a coisometry.
\end{theorem}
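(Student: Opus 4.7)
The plan is to reformulate the coisometry condition concretely: the restriction map $R \colon \mathcal{H}^\infty(\mathbb{B}_1^d) \to \mathcal{H}^\infty(\mathcal{V})$ is a coisometry precisely when, for every $f \in \mathcal{H}^\infty(\mathcal{V})$ and every $\varepsilon > 0$, there exists $F \in \mathcal{H}^\infty(\mathbb{B}_1^d)$ with $F|_{\mathcal{V}} = f$ and $\|F\| \leq \|f\| + \varepsilon$; equivalently, $R$ is surjective and the induced map $\mathcal{H}^\infty(\mathbb{B}_1^d)/\ker R \to \mathcal{H}^\infty(\mathcal{V})$ is an isometric isomorphism.

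First, I would translate the problem into the Fock-space picture. By Theorem~\ref{two_balls}, $\mathcal{H}^\infty(\mathbb{B}_1^d) \cong \mathcal{H}^\infty(\mathcal{B}(\mathcal{X})_1^d)$, and the latter is the noncommutative Hardy algebra of multipliers on the full Fock space $\mathcal{F}(\mathbb{C}^d)$. Writing $\mathcal{V} = \mathcal{V}_{\mathbb{B}_1^d}^I$ for the homogeneous ideal $I \subset \mathcal{P}_d$ of polynomials vanishing on $\mathcal{V}$, the subspace $\mathcal{F}_d^I = Ev(I)^{\perp}$ is $s_i^*$-invariant, so one has a well-defined compression $C \colon \mathcal{H}^\infty(\mathbb{B}_1^d) \to \mathcal{B}(\mathcal{F}_d^I)$, $F \mapsto p_I\, F|_{\mathcal{F}_d^I}$, and the restriction $R$ factors through $C$ via the standard matrix-evaluation of noncommutative functions on $\mathcal{V}$.

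Second, I would invoke Theorems~5.2 and~5.4 of~\cite{solomon_shalit_shamovich}. Theorem~5.2 identifies $\mathcal{H}^\infty(\mathcal{V})$ isometrically with the image of the compression $C$, i.e.\ with the multiplier algebra of the Hilbert space $\mathcal{F}_d^I$; Theorem~5.4 is a Sarason/commutant-lifting statement asserting that every element in this multiplier algebra lifts to some $F \in \mathcal{H}^\infty(\mathbb{B}_1^d)$ with $\|F\| = \|C(F)\|$. Combining these two identifications with the factorization of $R$ through $C$ yields simultaneously that $R$ is surjective and that the induced map on the quotient is an isometry, which is precisely the coisometry property.

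The main obstacle is the norm-preserving lift of Theorem~5.4: surjectivity of $R$ on its own would follow from comparatively soft abstract arguments, but matching the norms requires the noncommutative commutant lifting theorem of Popescu--Arveson type, which is the true content borrowed from~\cite{solomon_shalit_shamovich}. Beyond this appeal, the remaining work of the proof is bookkeeping---verifying that the matrix-evaluation picture of~\cite{agler_book, vinnikov_book} and Popescu's formal-free-series picture are compatible so that the compression interpretation of $R$ is unambiguous, and that the two-sided graded ideal cut out by $\mathcal{V}$ matches the coinvariant subspace $\mathcal{F}_d^I$ used in the cited theorems.
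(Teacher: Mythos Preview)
Your proposal is correct and matches the paper's treatment exactly: the paper does not give an independent proof of this theorem but simply records that it follows from Theorems~5.2 and~5.4 of~\cite{solomon_shalit_shamovich}, which is precisely the route you outline. Your write-up is just an expanded gloss on that citation, spelling out how the compression-to-$\mathcal{F}_d^I$ picture and the commutant-lifting/norm-preserving extension combine to yield the coisometry.
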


\begin{lemma}\label{image_of_restriction}
Consider $0<x<y$, and  a homogeneous ideal $I\subset\mathcal{P}_d$. Denote the noncommutative algebraic variety $\{X\in \mathbb{M}^d: \, \forall p\in I\,\, p(X)=0\}$ by $\mathcal{V}$. Then the image of the restriction map $$\mathcal{H}^\infty(\mathcal{V}\cap \mathbb{B}_y^d)\to \mathcal{H}^\infty(\mathcal{V}\cap \mathbb{B}_x^d)$$ lies in $\mathcal{A}(\mathcal{V}\cap \mathbb{B}_x^d)$.
\end{lemma}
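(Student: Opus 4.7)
The plan is to lift $f$ to a bounded holomorphic function on all of $\mathbb{B}_y^d$ via the Solomon--Shalit--Shamovich coisometry, translate the lift into a formal free series, and then exploit the strict inequality $x<y$ to see that on the smaller ball the series automatically satisfies the uniform-continuity condition defining $\mathcal{A}$.

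First I would rescale Theorem \ref{noncom_func_hinf_quot} from the unit ball to $\mathbb{B}_y^d$ by the homothety $A\mapsto \tfrac{1}{y}A$, exactly as in the corollaries of Theorems \ref{two_balls} and \ref{noncom_func_A}; homogeneity of $I$ gives $\tfrac{1}{y}\mathcal{V}=\mathcal{V}$, so the restriction $\mathcal{H}^\infty(\mathbb{B}_y^d)\to \mathcal{H}^\infty(\mathcal{V}\cap \mathbb{B}_y^d)$ is a coisometry. In particular, there exists $\tilde f\in\mathcal{H}^\infty(\mathbb{B}_y^d)$ with $\tilde f|_{\mathcal{V}\cap \mathbb{B}_y^d}=f$. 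By the corollary of Theorem \ref{two_balls}, $\tilde f$ corresponds to a formal free series $F\in \mathcal{H}^\infty(\mathcal{B}(\mathcal{X})_y^d)\subset \mathcal{F}_d^y$.

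Next I would verify that $F\in \mathcal{A}(\mathcal{B}(\mathcal{X})_x^d)$. Since $x<y$ and $F\in \mathcal{F}_d^y$, Popescu's characterization (Theorem~1.5 of \cite{amazing_stuff}, restated immediately before Theorem \ref{another_definition_of_A_d}) yields $\sum_{k\geq 0}x^k\|F_k(s_1,\ldots,s_d)\|<\infty$. By the Weierstrass M-test, the partial sums of $t\mapsto F(ts_1,\ldots,ts_d)=\sum_k t^k F_k(s_1,\ldots,s_d)$ converge uniformly on $[0,x]$, so the limit is continuous on $[0,x]$, which is the defining property of $\mathcal{A}(\mathcal{B}(\mathcal{X})_x^d)$. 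Applying the corollary of Theorem \ref{noncom_func_A} with $I=0$ then identifies $F$ with an element of $\mathcal{A}(\mathbb{B}_x^d)$, i.e.\ $\tilde f|_{\mathbb{B}_x^d}$ extends uniformly continuously to $\overline{\mathbb{B}_x^d}$. Restricting that extension to $\overline{\mathcal{V}\cap \mathbb{B}_x^d}\subset \overline{\mathbb{B}_x^d}$ furnishes a uniformly continuous extension of $f|_{\mathcal{V}\cap \mathbb{B}_x^d}$, so the latter lies in $\mathcal{A}(\mathcal{V}\cap \mathbb{B}_x^d)$ as required.

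The main obstacle I expect is the first step, the lifting via the coisometry: one has to check that the rescaling trick does transfer Theorem \ref{noncom_func_hinf_quot} honestly to radius $y$, and (less obviously) that the resulting $\tilde f$ can genuinely be viewed as a formal free series to which Popescu's summability estimate applies. Once both bridges are in place, the remainder is a routine dictionary translation between the formal-series picture, where the estimate $\sum x^k \|F_k\|<\infty$ is immediate, and the noncommutative-function picture, where it yields uniform continuity.
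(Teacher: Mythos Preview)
Your proposal is correct and follows essentially the same route as the paper: lift $f$ to $\tilde f\in\mathcal{H}^\infty(\mathbb{B}_y^d)$ via the (rescaled) coisometry of Theorem~\ref{noncom_func_hinf_quot}, pass to the formal series $F\in\mathcal{H}^\infty(\mathcal{B}(\mathcal{X})_y^d)$, observe that $F\in\mathcal{A}(\mathcal{B}(\mathcal{X})_x^d)$ because $x<y$, and then push down to $\mathcal{A}(\mathcal{V}\cap\mathbb{B}_x^d)$. The only cosmetic difference is that the paper phrases the key embedding $\mathcal{H}^\infty(\mathcal{B}(\mathcal{X})_y^d)\hookrightarrow\mathcal{A}(\mathcal{B}(\mathcal{X})_x^d)$ by simply noting that $t\mapsto F(ts_1,\ldots,ts_d)$ is already continuous on $[0,x]\subset[0,y)$, whereas you spell this out via Popescu's summability criterion and the Weierstrass M-test; and the paper packages the final descent as a composition $\mathcal{H}^\infty(\mathcal{B}(\mathcal{X})_y^d)\hookrightarrow\mathcal{A}(\mathcal{B}(\mathcal{X})_x^d)\cong\mathcal{A}(\mathbb{B}_x^d)\twoheadrightarrow\mathcal{A}(\mathcal{V}\cap\mathbb{B}_x^d)$ rather than an explicit restriction of the uniformly continuous extension.
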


\begin{proof}

Consider a function $f\in \mathcal{H}^\infty(\mathcal{V}\cap \mathbb{B}_y^d)$. Theorem \ref{noncom_func_hinf_quot} implies that
$f$ is the restriction of some function $\widetilde{f}\in \mathcal{H}^\infty(\mathbb{B}_y^d)$.

Recall (see \S 2) that the algebra $\mathcal{A}(\mathcal{B}(\mathcal{X})_x^d)$ consists of formal free series ${F\in \mathcal{F} _d^x}$ such that the map ${t\mapsto F(ts_1,\ldots,ts_d)}$ can be extended continuously from $[0,x)$ to $[0,x]$. On the other hand, the algebra $\mathcal{H}^\infty(\mathcal{B}(\mathcal{X})_y^d)$ consists of formal free series $F\in \mathcal{F}_d^r$ such that $$\sup\limits_{0< t<y}\|F(ts_1,\ldots,ts_d)\|<\infty$$ (see \cite[Theorem 3.1]{amazing_stuff}). It is clear that for any $F\in \mathcal{H}^\infty(\mathcal{B}(\mathcal{X})_y^d)$ the map ${t\mapsto F(ts_1,\ldots,ts_d) }$ is continuous on $[0,x]\subset [0,y)$. Hence there is an embedding $\mathcal{H}^\infty(\mathcal{B}(\mathcal{X})_y^d) \hookrightarrow \mathcal{A}(\mathcal{B}(\mathcal{X}) _x^d)$.

Note that the composition $$\mathcal{H}^\infty(\mathcal{B}(\mathcal{X})_y^d) \hookrightarrow \mathcal{A}(\mathcal{B}(\mathcal{X} )_x^d)\cong \mathcal{A}(\mathbb{B}_x^d)\twoheadrightarrow \mathcal{A}(\mathcal{V}\cap \mathbb{B}_x^d)\hookrightarrow \mathcal {H}^\infty(\mathcal{V}\cap \mathbb{B}_x^d)$$ is the same as the restriction map $\mathcal{H}^\infty(\mathcal{B}(\mathcal{X} )_y^d)\to \mathcal{H}^\infty(\mathcal{V}\cap \mathbb{B}_x^d)$. Denote by $g$ the image of $\widetilde{f}$ under this composition. Then $g\in \mathcal{A}(\mathcal{V}\cap \mathbb{B}_x^d)$ is the image of $f$ under the original restriction map $$\mathcal{H}^\infty(\mathcal {V}\cap \mathbb{B}_y^d)\to \mathcal{H}^\infty(\mathcal{V}\cap \mathbb{B}_x^d).$$
\end{proof}

\begin{lemma} \label{noncom_func_limits}
Consider $0< x<y<r$ and  a homogeneous ideal $I\subset\mathcal{P}_d$.
Denote by $${\phi_{xy}: \mathcal{A}(\mathcal{V}\cap \mathbb{B}_y^d)\to \mathcal{A}(\mathcal{V}\cap \mathbb {B}_x^d)},\quad \psi_{xy}: \mathcal{H}^\infty(\mathcal{V}\cap \mathbb{B}_y^d)\to \mathcal{H}^ \infty(\mathcal{V}\cap \mathbb{B}_x^d)$$ restriction maps.
Then $\underleftarrow\lim(\mathcal{A}(\mathcal{V}\cap \mathbb{B}_x^d), \phi_{xy})_{0<x<r}\cong \underleftarrow\lim (\mathcal{H}^\infty(\mathcal{V}\cap \mathbb{B}_x^d), \psi_{xy})_{0<x<r}$.

\end{lemma}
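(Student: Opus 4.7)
The plan is to assemble the natural inclusions $\iota_x: \mathcal{A}(\mathcal{V}\cap \mathbb{B}_x^d)\hookrightarrow \mathcal{H}^\infty(\mathcal{V}\cap \mathbb{B}_x^d)$ into an isomorphism of the two inverse limits, using Lemma~\ref{image_of_restriction} as the crucial ingredient.

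First I would verify that each $\iota_x$ is an isometric algebra homomorphism, which is immediate from the definition of $\mathcal{A}(\mathcal{V}\cap \mathbb{B}_x^d)$ as a subalgebra of $\mathcal{H}^\infty(\mathcal{V}\cap \mathbb{B}_x^d)$ with the induced sup-norm. Moreover, for $0<x<y<r$ the square $\psi_{xy}\circ \iota_y = \iota_x \circ \phi_{xy}$ commutes, since all four arrows are just restriction of noncommutative functions from $\mathcal{V}\cap \mathbb{B}_y^d$ to $\mathcal{V}\cap \mathbb{B}_x^d$. By the universal property of the inverse limit, the family $(\iota_x)_{0<x<r}$ therefore induces a continuous algebra homomorphism
$$\iota: \underleftarrow\lim(\mathcal{A}(\mathcal{V}\cap \mathbb{B}_x^d), \phi_{xy})_{0<x<r}\to \underleftarrow\lim(\mathcal{H}^\infty(\mathcal{V}\cap \mathbb{B}_x^d), \psi_{xy})_{0<x<r}.$$

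Next I would show that $\iota$ is bijective. Injectivity is clear, since each $\iota_x$ is injective. For surjectivity, suppose $(f_x)_{0<x<r}$ lies in the right-hand inverse limit. Given any $x\in (0,r)$, pick some $y\in (x,r)$; by the compatibility relation, $f_x=\psi_{xy}(f_y)$, and Lemma~\ref{image_of_restriction} guarantees that $\psi_{xy}(f_y)\in \mathcal{A}(\mathcal{V}\cap \mathbb{B}_x^d)$. Hence every component $f_x$ already lies in $\mathcal{A}(\mathcal{V}\cap \mathbb{B}_x^d)$, so the family $(f_x)$ comes from a (necessarily unique) element of the left-hand inverse limit.

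Finally I would confirm that $\iota$ is a topological isomorphism, so that it is also a homeomorphism of locally convex algebras. The natural topology on each inverse limit is defined by the family of seminorms obtained by pulling back the norm on the $x$-th factor along the canonical projection. Since every $\iota_x$ is isometric, these two families of seminorms correspond to one another under $\iota$, so $\iota^{-1}$ is automatically continuous. The main nontrivial ingredient in the whole argument is Lemma~\ref{image_of_restriction}, which bridges the gap between merely bounded holomorphic functions and those admitting a uniformly continuous boundary extension; once it is available, the remainder of the proof reduces to a routine check of the universal property of the inverse limit.
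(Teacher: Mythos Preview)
Your proof is correct and follows essentially the same approach as the paper: both build the map from the $\mathcal{A}$-limit to the $\mathcal{H}^\infty$-limit out of the inclusions $\iota_x$, and both use Lemma~\ref{image_of_restriction} to go the other way. The only cosmetic difference is that the paper constructs an explicit continuous inverse by reindexing via $x\mapsto x^2/r$ and invoking the universal property, whereas you argue surjectivity directly and obtain continuity of the inverse from the fact that each $\iota_x$ is isometric; neither route requires anything the other doesn't.
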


\begin{proof}
Take $A=\underleftarrow\lim(\mathcal{A}(\mathcal{V}\cap \mathbb{B}_x^d), \phi_{xy})_{0<x<r}$, $B =\underleftarrow\lim(\mathcal{H}^\infty(\mathcal{V}\cap \mathbb{B}_x^d), \psi_{xy})_{0<x<r}$. Denote by ${\alpha_x: A\to \mathcal{A}(\mathcal{V}\cap \mathbb{B}_x^d)}$ and $\beta_x: B\to \mathcal{H}^\infty (\mathcal{V}\cap \mathbb{B}_x^d)$ the natural projections.

Denote by $\iota_x: \mathcal{A}(\mathcal{V}\cap \mathbb{B}_x^d)\to \mathcal{H}^\infty(\mathcal{V}\cap \mathbb{B }_x^d)$ the inclusion map. Note that the compositions $\iota_x\circ\alpha_x$ induce a continuous homomorphism $\alpha: A\to B$. On the other hand, from Lemma \ref{image_of_restriction} we get the embeddings $\zeta_x: \mathcal{H}^\infty(\mathcal{V}\cap \mathbb{B}_x^d)\to \mathcal{A}( \mathcal{V}\cap \mathbb{B}_{\frac{x^2}{r}}^d)$. Since $\{\frac{x^2}{r}, x\in (0,r)\}=(0,r)$, the compositions $\zeta_x\circ \beta_x$ induce a continuous homomorphism $\beta: B\to A$. It can be seen that $\alpha$ and $\beta$ are inverse to each other, and hence they define a topological isomorphism of locally convex algebras.
\end{proof}

Our next task is to single out some subalgebra in the algebra of all noncommutative holomorphic functions on a homogeneous noncommutative algebraic variety and show that it is isomorphic to the algebra $\mathcal{F}_d^r/\overline{I}$ considered in \S 4.

Consider $r>0$ and  a homogeneous ideal $I\subset\mathcal{P}_d$. Denote the noncommutative algebraic subvariety $\mathcal{V}_{\mathbb{B}_r^d}^I\subset \mathbb{B}_r^d$ by $\mathcal{V}_r$.
By $\mathcal{F}(\mathcal{V}_r)$ we denote the algebra of noncommutative holomorphic functions on $\mathcal{V}_r$, bounded on $\mathcal{V}_r\cap \mathbb{B}_x^ d$ for all $x<r$. We introduce a system of seminorms on it: for $f\in \mathcal{F}(\mathcal{V}_r)$ and $0<x<r$ we set $|f|_x:=\sup\limits_{X\in \mathcal {V}_r\cap \mathbb{B}_x^d} \|f(X)\|$.

The following theorem is the main result of this section.

\begin{theorem}
Let $I\subset\mathcal{P}_d$ be a homogeneous ideal, $r>0$ and $\mathcal{V}_r=\mathcal{V}_{\mathbb{B}_r^d} ^I$.
Then there exists a unique topological isomorphism of algebras $\Theta: \mathcal{F}_d^r/\overline{I}\to \mathcal{F}(\mathcal{V}_r)$ such that $$\Theta(z_i+\overline{I})(X_1,\ldots,X_d)=X_i.$$
\end{theorem}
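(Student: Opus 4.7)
The plan is to obtain $\Theta$ as a composition of four isomorphisms that realize $\mathcal{F}_d^r/\overline{I}$ as an inverse limit and then re-interpret that limit in terms of bounded holomorphic noncommutative functions. Specifically, I would build the chain
\begin{align*}
\mathcal{F}_d^r/\overline{I}
&\xrightarrow{(1)} \underleftarrow\lim(\mathcal{A}_d^x/\overline{I},\widetilde{\phi_{xy}})_{0<x<r} \\
&\xrightarrow{(2)} \underleftarrow\lim(\mathcal{A}(\mathcal{V}\cap\mathbb{B}_x^d),\phi_{xy})_{0<x<r} \\
&\xrightarrow{(3)} \underleftarrow\lim(\mathcal{H}^\infty(\mathcal{V}\cap\mathbb{B}_x^d),\psi_{xy})_{0<x<r} \\
&\xrightarrow{(4)} \mathcal{F}(\mathcal{V}_r),
\end{align*}
where $\mathcal{V}\subset\mathbb{M}^d$ denotes the full noncommutative algebraic variety cut out by $I$. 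Step (1) is Theorem \ref{inverse_limit}. Step (2) is the corollary to Theorem \ref{noncom_func_A} applied fiberwise, after identifying $\mathcal{A}_d^x\cong \mathcal{A}(\mathcal{B}(\mathcal{X})_x^d)$ as in the proof of Theorem \ref{inverse_limit}; here one must also verify that the scaling map $\widetilde{\phi_{xy}}:s_i\mapsto (x/y)s_i$ on the disk-algebra side intertwines with the honest restriction of noncommutative functions $\mathcal{A}(\mathcal{V}\cap\mathbb{B}_y^d)\to \mathcal{A}(\mathcal{V}\cap\mathbb{B}_x^d)$. Step (3) is Lemma \ref{noncom_func_limits}, and step (4) is a gluing argument.

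For step (4), given a compatible family $(f_x)_{0<x<r}$ with $f_x\in \mathcal{H}^\infty(\mathcal{V}_r\cap\mathbb{B}_x^d)$ and $f_y|_{\mathcal{V}_r\cap\mathbb{B}_x^d}=f_x$ for $x<y$, I assemble a sequence of maps $f=(f_n)_{n\in\mathbb{N}}$ on $\mathcal{V}_r=\bigcup_{0<x<r}(\mathcal{V}_r\cap\mathbb{B}_x^d)$ by setting $f_n(X):=f_{x,n}(X)$ for any $x>\|X\|$. The noncommutative-function axioms (behavior under direct sums and similarity conjugation) transfer from each $f_x$ to $f$ because the matrices in question lie in a common sub-ball; holomorphy of $f$ at every $X\in\mathcal{V}_r$ is inherited from $f_x$ with $x>\|X\|$; and $f$ is bounded on each $\mathcal{V}_r\cap\mathbb{B}_x^d$ by construction. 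Since the seminorm $|f|_x$ on $\mathcal{F}(\mathcal{V}_r)$ equals by definition $\|f_x\|_{\mathcal{H}^\infty(\mathcal{V}_r\cap\mathbb{B}_x^d)}$, this identification is a topological isomorphism of locally convex algebras.

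The formula $\Theta(z_i+\overline{I})(X)=X_i$ is verified by tracing $z_i$ through (1)--(4): at each stage the image of $z_i$ corresponds, up to the appropriate rescaling, to the coordinate function $X\mapsto X_i$, and the rescalings cancel out in the end (compare Theorem \ref{noncom_func_A} and its corollary). Uniqueness of $\Theta$ follows from continuity together with density of the subalgebra generated by $z_1,\ldots,z_d$ in $\mathcal{F}_d^r/\overline{I}$ (inherited from density of $\mathcal{P}_d$ in $\mathcal{F}_d^r$). The main obstacle is the bookkeeping in step (2) --- confirming that under $\mathcal{A}_d^x/\overline{I}\cong \mathcal{A}(\mathcal{V}\cap\mathbb{B}_x^d)$ the connecting map $\widetilde{\phi_{xy}}$ really becomes the restriction of noncommutative functions --- together with the verification in step (4) that a compatible family of noncommutative holomorphic functions on the expanding sub-varieties genuinely defines a noncommutative holomorphic function on the union. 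Once these compatibilities are in hand, every arrow in the chain is a topological algebra isomorphism and the composition is the required $\Theta$.
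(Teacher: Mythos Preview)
Your proposal is correct and follows essentially the same route as the paper: the paper also builds $\Theta$ as the composition of the four isomorphisms you list, first verifying the gluing isomorphism $\mathcal{F}(\mathcal{V}_r)\cong\underleftarrow\lim(\mathcal{H}^\infty(\mathcal{V}_r\cap\mathbb{B}_x^d),\psi_{xy})$, then invoking Theorem~\ref{inverse_limit}, the corollary to Theorem~\ref{noncom_func_A} (with the explicit diagram check that $\widetilde{\phi_{xy}}$ matches restriction), and Lemma~\ref{noncom_func_limits}. The only point where the paper is slightly more explicit is the commutativity in your step~(2): it writes out the composite $\Psi_x:\mathcal{A}_d^x/\overline{I}\to\mathcal{A}(\mathcal{V}\cap\mathbb{B}_x^d)$, notes that $\Psi_x(s_i+\overline{I})(X)=\tfrac{1}{x}X_i$, and checks directly that both paths around the square send $s_i+\overline{I}$ to the function $X\mapsto\tfrac{1}{y}X_i$.
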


\begin{proof}
The uniqueness is obvious from the definition of the algebra $\mathcal{F}_d^r$. As above, for any $x,y\in (0,r)$, $x<y$,
consider the restriction maps $${\phi_{xy}: \mathcal{A}(\mathcal{V}\cap \mathbb{B}_y^d)\to \mathcal{A}(\mathcal{V}\cap \mathbb{B}_x^d)}, \quad {\psi_{xy}: \mathcal{H}^\infty(\mathcal{V}\cap \mathbb{B}_y^d)\to \mathcal{H }^\infty(\mathcal{V}\cap \mathbb{B}_x^d)}.$$

We check that $\mathcal{F}(\mathcal{V}_r)\cong \underleftarrow\lim(\mathcal{H}^\infty(\mathcal{V}_r\cap \mathbb{B}_x^d) , \psi_{xy})_{0<x<r}$.
It is clear that there exist restriction maps $${\mathcal{F}(\mathcal{V}_r)\to \mathcal{H}^\infty(\mathcal{V}_r\cap \mathbb{B}_x^d)}.$$ On the other hand, consider a family of noncommutative holomorphic functions $\{f_x\}_{0<x<r}$, ${f_x\in \mathcal{H}^\infty(\mathcal{V}_r\cap \mathbb{ B}_x^d)}$ such that $\psi_{xy}(f_y)=f_x$ for all $0<x<y<r$.
We can pick a function $f: \mathcal{V}_r\cap \mathbb{B}_r^d\to \mathbb{M}^1$, coinciding with $f_x$ on $\mathcal{V}_r\cap \mathbb{ B}_x^d$ for all $0<x<r$. It will be a noncommutative holomorphic function bounded on each  $\mathcal{V}_r\cap \mathbb{B}_x^d$. Hence $\mathcal{F}(\mathcal{V}_r)\cong \underleftarrow\lim(\mathcal{H}^\infty(\mathcal{V}_r\cap \mathbb{B}_x^d), \psi_{xy})_{0<x<r}$.

As in the Theorem \ref{inverse_limit}, for every $0\leq x<r$ we set $\mathcal{A}_d^x=\mathcal{A}_d$. For $0<x<y<r$ consider the homomorphism $$\widetilde{\phi_{xy}}: \mathcal{A}_d^{y}/\overline{I}=\mathcal{A}_d/\overline{I} \to \mathcal{A}_d/\overline{I}=\mathcal{A}_d^{x}/\overline{I},$$
uniquely determined by the condition $\widetilde{\phi_{xy}}(s_i+\overline{I})=\frac{x}{y}s_i+\overline{I}$ for all $i=1,\ldots,d$.

We have a chain of isometric isomorphisms
$$\mathcal{A}_d^x/\overline{I}\to\mathcal{A}(\mathcal{B}(\mathcal{X})_1^d)/\overline{I}\to \mathcal {A}(\mathcal{B}(\mathcal{X})_r^d)/\overline{I}\xrightarrow{\Phi_r} \mathcal{A}(\mathcal{V}\cap \mathbb{B} _r^d),$$
in which the first isomorphism is generated by the isomorphism $\Phi^{-1}$ from Theorem \ref{another_definition_of_A_d}, the second one acts according to the rule $F\mapsto F^{\frac{1}{r}}$ (see the proof of the corollary from Theorem \ref{noncom_func_A}), and the third one is defined in ($\ast$). Denote the composition of these isomorphisms by $\Psi_r$. It is easy to see that $\Psi_r(s_i+\overline{I})(X_1,\ldots,X_d)=\frac{1}{r}X_i$ for all $i=1,\dots,d$ and $x\in \mathcal{V}\cap \mathbb{B}_r^d$.

Let us check that the following diagram is commutative:
$$\xymatrix{
\mathcal{A}_d^x/\overline{I} \ar[d]_{\Psi_x} & \mathcal{A}_d^y/\overline{I} \ar[l]_{\widetilde{\phi_{xy}}} \ar[d]^{\Psi_y} \\
\mathcal{A}(\mathcal{V}_r\cap \mathbb{B}_x^d) &  \mathcal{A}(\mathcal{V}_r\cap \mathbb{B}_y^d) \ar[l]^{\phi_{xy}} 
}$$ 
For all $i=1,\ldots,d$ the following equalities hold
$$(\Psi_x\circ \widetilde{\phi_{xy}}(s_i+\overline{I}))(X_1,\ldots,X_d)=(\Psi_x(\frac{x}{y}s_i+\overline{ I}))(X_1,\ldots,X_d)=\frac{1}{x}\cdot \frac{x}{y}X_i=\frac{1}{y}X_i.$$
On the other hand, $$(\phi_{xy}\circ \Psi_y(s_i+\overline{I}))(X_1,\ldots,X_d)=(\Psi_y(s_i+\overline{I}))(X_1,\ldots,X_d)=\frac{1}{y}X_i.$$
This proves the commutativity of the diagram. From this and Theorem \ref{inverse_limit} it follows that $$\mathcal{F}_d^r/\overline{I}\cong \underleftarrow\lim(\mathcal{A}_d^x/\overline{I}, \widetilde{\phi_{xy}})_{0< x<r}\cong \underleftarrow\lim(\mathcal{A}(\mathcal{V}_r\cap \mathbb{B}_x^d), \phi_{xy})_{0<x<r}.$$
Therefore, by Lemma \ref{noncom_func_limits}, $$\mathcal{F}_d^r/\overline{I}\cong \underleftarrow\lim(\mathcal{A}(\mathcal{V}_r\cap \mathbb{ B}_x^d), \phi_{xy})_{0<x<r}\cong \underleftarrow\lim(\mathcal{H}^\infty(\mathcal{V}_r\cap \mathbb{B} _x^d), \psi_{xy})_{0<x<r}\cong \mathcal{F}(\mathcal{V}_r).$$
If we denote the composition of these maps by $\Theta$, then we see that $\Theta(z_i+\overline{I})(X_1,\ldots,X_d)=X_i$ for all $i=1,\ldots,d$.
\end{proof}

%\printbibliography[]


\begin{thebibliography}{}





\bibitem{agler_book} Agler, Jim and McCarthy, John Edward and Young, Nicholas John. \textit{Operator analysis. {Hilbert} space methods in complex analysis.} Cambridge: Cambridge University Press, 2020.

\bibitem{allan} G.R. Allan. \textquotedblleft Stable inverse-limit sequences, with applications to Fréchet algebras.\textquotedblright  Studia Math. 121 (1992), 277-308.

\bibitem{dixmie} Jacques Dixmier. \textit{$C^*$-algebras.} Elsevier (North-Holland), Amsterdam, 1983.

\bibitem{noncommut_analog_3} Anar Dosi. \textquotedblleft Formally-radical functions in elements of a nilpotent Lie algebra and noncommutative localizations.\textquotedblright  \textit{Algebra Colloq.} 17 (2010), Special Issue no. 1, 749–788.



\bibitem{p50} J. M. G. Fell and R. S. Doran, \textit{Representations of $*$–algebras, locally compact groups, and Banach
$*$-algebraic bundles. Vol. 1. Basic representation theory of groups and algebras.} Pure and Applied
Mathematics, 125. Academic Press, Inc., Boston, MA, 1988

\bibitem{p53} G. Gierz, \textit{Bundles of topological vector spaces and their duality.} Lecture Notes in Mathematics, 955.
Springer-Verlag, Berlin-New York, 1982.


\bibitem{gen_top} Ryszard Engelking  \textit{General Topology.} Heldermann Verlag, 1989

\bibitem{vinnikov_book} Dmitry S. Kaliuzhnyi-Verbovetskyi
Victor Vinnikov. \textit{Foundations of Free
Noncommutative Function Theory.} Providence, RI: American Mathematical Society (AMS), 2014.



\bibitem{noncommut_analog_1} 
Denis Luminet. \textquotedblleft A functional calculus for Banach PI-algebras. \textquotedblright  \textit{ Pacific J. Math.} 125 (1986), no. 1, 127–160. 

\bibitem{noncommut_analog_2} Denis Luminet. \textquotedblleft Functions of several matrices.\textquotedblright  \textit{Boll. Un. Mat. Ital. B (7)} 1 (1997), no. 3, 563–586.





\bibitem{noncommut_analog_7} Alexei Yu Pirkovskii. \textquotedblleft Holomorphically finitely generated algebras.\textquotedblright  \textit{J. Noncommut. Geom.} 9 (2015), no. 1, 215–264.

\bibitem{noncommut_analog_6} Alexei Yu Pirkovskii. \textquotedblleft Holomorphic functions on the quantum polydisk and on the quantum ball.\textquotedblright  \textit{J. Noncommut. Geom.} 13 (2019), no. 3, 857–886.

\bibitem{pirkovskii} A. Yu. Pirkovskii. \textquotedblleft Quantized algebras of holomorphic functions on the polydisk and on the ball.\textquotedblright \textit{Preprint} arXiv:1508.05768 [math.FA]. 

\bibitem{big_fat_book} Gilles Pisier. \textit{Introduction to operator space theory.} Cambridge: Cambridge University Press, 2003.

\bibitem{Von_Neumann} Gelu Popescu. \textquotedblleft Von Neumann inequality for $(\mathcal{B}(\mathscr{H})^n)_1$.\textquotedblright \textit{Math. Scand.} 68.2 (1991), 292-304

\bibitem{amazing_stuff} Gelu Popescu. \textquotedblleft Free holomorphic functions on the unit ball of $(\mathcal{B}(H)^n$.\textquotedblright  \textit{J. Funct. Anal.} 241, no. 1, 268-333 (2006).

\bibitem{solomon_shalit_shamovich} Guy {Salomon}, Orr M. {Shalit}, Eli {Shamovich}. \textquotedblleft Algebras of bounded noncommutative analytic functions on subvarieties of the noncommutative unit ball.\textquotedblright \textit{Trans. Amer. Math. Soc.} 370.12 (2018), 8639—8690

\bibitem{subproduct} Orr {Shalit}, Baruch {Solel}. \textquotedblleft Subproduct systems.\textquotedblright \textit{Doc. Math.} 14 (2009), 801-868.

%\bibitem{vkr} И. С. Ястребов,  \textquotedblleft Непрерывные поля и расслоения банаховых пространств.\textquotedblright \textit{ВКР, факультет математики НИУ ВШЭ.}





\bibitem{p177}  D. P. Williams, \textit{Crossed products of C*-algebras.} Mathematical Surveys and Monographs, 134.
American Mathematical Society, Providence, RI, 2007.



\bibitem{noncommut_analog_4} А. А. Доси  \textquotedblleft  Некоммутативные голоморфные функции от элементов алгебры Ли и задача об абсолютном базисе.\textquotedblright  \textit{Изв. РАН. Сер. матем.}, 73:6 (2009), 77–100.

\bibitem{noncommut_analog_5} А. А. Досиев  \textquotedblleftГоломорфные функции от базиса нильпотентной алгебры Ли.\textquotedblright  \textit{Функц. анализ и его прил.}, 34:4 (2000), 82–84

\bibitem{noncommut_analog_8} А. Ю. Пирковский  \textquotedblleft Оболочки Аренса–Майкла, гомологические эпиморфизмы и относительно квазисвободные алгебры.\textquotedblright  \textit{Тр. ММO,} 69, 2008, 34–125 

\bibitem{rudin} У. Рудин  \textit{Теория функций в единичном шаре из 
$\mathbb{C}^n$.} М.:Мир, 1984.

\bibitem{shafarevich}  И.Р. Шафаревич, А.О. Ремизов  \textit{Линейная алгебра и геометрия.} Издательство \textquotedblleftФизматлит\textquotedblright, 2009





\end{thebibliography}
\end{document}